\documentclass[a4paper,11pt]{amsart}

\usepackage[titletoc,title]{appendix}
\usepackage{amsmath,amssymb,amsthm,latexsym,amscd,mathrsfs}
\usepackage{indentfirst}
\usepackage{stmaryrd}
\usepackage{graphicx}
\usepackage{extarrows}
\usepackage{multirow}
\usepackage{latexsym}
\usepackage{amsfonts}
\usepackage{color}
\usepackage{pictexwd,dcpic}
\usepackage{graphicx}
\usepackage{psfrag}
\usepackage{hyperref}
\usepackage{comment}
\usepackage{enumerate}
\usepackage{romannum}
\usepackage{tikz}
\usepackage{pgfplots}
\pgfplotsset{compat=1.18}
\usepackage{hyperref,cleveref}

\numberwithin{equation}{section} \theoremstyle{plain}
\newtheorem{thm}{Theorem}[section]

\newtheorem{lem}[thm]{Lemma}
\newtheorem{cor}[thm]{Corollary}
\newtheorem{defn}[thm]{Definition}

\makeatletter

\def\<{\langle}
\def\>{\rangle}
\def\({\left(}
\def\){\right)}
\def\[{\left[}
\def\]{\right]}

\makeatother
\title[ Minimal Hypersurfaces with constant scalar curvature in $\mathbf{S}^6$ ]
{Minimal Hypersurfaces with constant scalar curvature in $\mathbf{S}^6$}
\author[Y. Tao]{Ya Tao}
\address{Chern Institute of Mathematics and LPMC,
Nankai University, Tianjin 300071, P. R. China.}
\email{tao-ya@mail.nankai.edu.cn}

\subjclass[2020]{53C12, 53C20, 53C40.}
\date{}
\keywords{isoparametric hypersurfaces; constant scalar curvature; Chern conjecture.}
\thanks{Y. Tao is partially supported by 
	NSFC (No. 12371048) and Nankai Zhide
	Foundation.}
\begin{document}
\maketitle

%%%%%%%%%%%%%%%%%%%%%
\begin{abstract}
%In this paper, we prove that for any closed minimal $5$-dimensional hypersurface $M^5$
%in the unit sphere $\mathbf{S}^6$ for which the functions $S,f_3,f_4$ are constants, and at points with at least four distinct principal curvatures, or exactly three with multiplicities $(2,2,1)$, the principal curvatures satisfy $A(r)>0$ for $r=1,2,\cdots,5$, the hypersurface $M^5$ is isoparametric.
In this paper, we propose certain assumptions on the principal curvatures for a closed minimal hypersurface $M^5$ in  $\mathbf{S}^6$ to be isoparametric, provided that the functions $S, f_3,f_4$ are constants.
Our result removes the nonnegative scalar curvature assumption as in Tang and Yan \cite{TY}. 
Finally, as a rigidity result, if $M^5\subset \mathbf{S}^6$ has a point with exactly two distinct principal curvatures, then it must be a Clifford torus.
\end{abstract}
%%%%%%%%%%%%%%%%%%%%%%%

%%%%%%%%%%%%%%%%%%%%%%
	\section{Introduction}
	
Exploring the relationship between geometric invariants and the structure of manifolds or submanifolds has always been an important problem in global differential geometry. In 1968, Simons \cite{Sim} gave an integral formula for the squared norm $S$ of the second fundamental form. Shortly thereafter, Chern et al. \cite{CdK} and Lawson \cite{Law} independently obtained the rigidity result when $S = n$, as shown in the theorem below.
\begin{thm}$($\cite{CdK,Law,Sim}$)$
	Let $M^n\subset\mathbf{S}^{n+1}$ be a closed minimal immersed hypersurface. Then 
	$$\int_{M}(S-n)S\geq 0.$$
	In particular, for $S\leq n$, one has either $S\equiv0$ or $S\equiv n$ on $M^n$. Moreover, for $S\equiv0$, $M^n$ is the equatorial $n$-spheres in $\mathbf{S}^{n+1}$; for $S\equiv n$, $M^n$ is the Clifford tori $\mathbb{S}^k\left(\sqrt{\frac{k}{n}}\right)\times\mathbb{S}^{n-k}\left(\sqrt{\frac{n-k}{n}}\right)$, $1\leq k\leq n-1$.
\end{thm}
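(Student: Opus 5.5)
The approach is the classical one of Simons, followed by the algebraic characterization of the equality case due to Chern--do Carmo--Kobayashi and Lawson. Let $h_{ij}$ be the components of the second fundamental form in a local orthonormal frame, with $\sum_i h_{ii}=0$, and write $h_{ijk}$ for its covariant derivative. By the Codazzi equation, $h_{ijk}$ is totally symmetric.

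\textbf{Step 1 (Simons identity).} Commuting covariant derivatives with the Ricci identity, and using the Gauss equation for the curvature of $\mathbf{S}^{n+1}$ together with minimality, gives
\[
\tfrac12\Delta S=\sum_{i,j,k}h_{ijk}^2+S(n-S).
\]
Since $M$ is closed, integrating over $M$ yields
\[
\int_M(S-n)S=\int_M|\nabla h|^2\ge 0.
\]

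\textbf{Step 2 (the case $S\le n$).} Here the integrand $(S-n)S$ is pointwise $\le 0$, while its integral is $\ge0$. Hence $(S-n)S\equiv 0$ and $\nabla h\equiv0$. The second fundamental form is therefore parallel, so $S$ is constant. Since $M$ is connected, either $S\equiv0$ or $S\equiv n$. If $S\equiv0$, then $M$ is totally geodesic, hence an equatorial $n$-sphere.

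\textbf{Step 3 (the case $S\equiv n$, the main obstacle).} Now $\nabla h=0$, so the principal curvatures are constant and the eigen-distributions are parallel. Diagonalize $h_{ij}=\lambda_i\delta_{ij}$. The second Ricci identity gives
\[
0=\sum_{i,j}(\lambda_i-\lambda_j)^2(1+\lambda_i\lambda_j).
\]
The task is to show that there are at most two distinct principal curvatures.

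To do this, let $\lambda,\mu$ be principal curvatures with $\lambda\ne\mu$. Parallelism of the eigen-distributions forces the mixed sectional curvature to vanish: $K(e_i,e_j)=0$ for $e_i,e_j$ in different eigenspaces. By the Gauss equation, $K(e_i,e_j)=1+\lambda_i\lambda_j$, so $1+\lambda\mu=0$. Three pairwise distinct values $\lambda,\mu,\nu$ would satisfy $\lambda\mu=\lambda\nu=-1$, forcing $\mu=\nu$, a contradiction. Hence there are exactly two constant principal curvatures $\lambda>0>\mu$ with $\lambda\mu=-1$. Let $k$ be the multiplicity of $\lambda$. Minimality gives $k\lambda+(n-k)\mu=0$, which yields $\lambda=\sqrt{(n-k)/k}$, and then $S=k\lambda^2+(n-k)\mu^2=n$ holds automatically.

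\textbf{Step 4 (identifying the hypersurface).} By de Rham decomposition, $M$ is locally a Riemannian product of two totally umbilic pieces of constant curvature $1+\lambda^2=n/k$ and $1+\mu^2=n/(n-k)$. These have radii $\sqrt{k/n}$ and $\sqrt{(n-k)/n}$. Then either the standard rigidity argument for hypersurfaces with parallel second fundamental form, or directly checking that the position vector splits in $\mathbb{R}^{k+1}\oplus\mathbb{R}^{n-k+1}$, identifies $M$ with the Clifford torus $\mathbb{S}^k\big(\sqrt{k/n}\big)\times\mathbb{S}^{n-k}\big(\sqrt{(n-k)/n}\big)$.
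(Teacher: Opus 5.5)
Your proposal is correct and follows the classical Simons/Chern--do Carmo--Kobayashi/Lawson argument the paper relies on: the paper does not reprove this theorem but cites it, recording Simons' identity as \eqref{DeltaS}, and it reuses the same mechanism ($h_{ijk}\equiv 0$ together with \eqref{DeltaS} gives $S(S-n)=0$, hence a Clifford torus) at the end of the proof of Theorem~\ref{two}. One small addition is needed in Step~3: you should rule out the case of a single distinct principal curvature, which is immediate because an umbilic minimal hypersurface has $S=0\neq n$.
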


Based on the above work, Chern \cite{CSS} proposed the following famous conjecture regarding compact minimal hypersurfaces in a sphere.\\
\noindent \textbf{Chern conjecture.} \quad Let $M^n$ be an $ n$-dimensional compact minimal hypersurface in the unit sphere $\mathbf{S}^{n+1}$ with constant scalar curvature $R$. Then the set of all possible values of the scalar curvature (equivalently, $ S $) of $ M^n $ is a discrete set in $ \mathbb{R} $.

Chern's conjecture can be decomposed into several problems of pinching the scalar curvature of compact minimal hypersurfaces in the unit sphere. The first difficulty is the second gap problem, which states that, under the assumptions of Chern conjecture, if $n \leq  S\leq 2n $, then either $S = n$ or $ S = 2n$.  Peng and Terng \cite{PT1, PT2} completely solved the second gap problem for the case $n = 3$, and $S = 6$ can be realized by Cartan minimal isoparametric hypersurfaces in the unit sphere $\mathbf{S}^4 $. Furthermore, for general dimension $n$, under the assumptions of Chern conjecture, they also proved that if $S > n $, then necessarily $S > n + \frac{1}{12n}$, which for the first time gave a breakthrough in the second pinching problem. Subsequently, Yang and Cheng \cite{YC} advanced the second gap problem to $\frac{n}{3}$; Suh and Yang \cite{SY} improved Peng and Terng's result to $ \frac{3n}{7}$.  For a more detailed introduction to Chern conjecture and related problems, we refer the reader to \cite{GT, LXX,XX, TWY,TY}.

Up to now, all known closed minimal hypersurfaces in spheres with constant scalar curvature are isoparametric. Based on this, Verstraelen, Montiel, Ros and Urbano \cite{VL} first proposed a stronger version of Chern conjecture, namely:\\
\noindent \textbf{Stronger Chern conjecture.} \quad Let $M^n$ be a closed, minimally immersed hypersurface of the unit sphere $\mathbf{S}^{n+1}$ with constant scalar curvature. Then $M^n$ is isoparametric.

In 1993, Chang \cite{CSP} proved the above version of Chern's conjecture for the case $n = 3$. In fact, without requiring minimality, de Almeida and Brito \cite{dB} proved the following theorem. 
\begin{thm}$($\cite{dB}$)$
	Let $M^3\subset\mathbf{S}^4$ be a closed hypersurface with constant mean curvature $H$ and constant nonnegative scalar curvature $R$. Then $M^3$ is isoparametric.
\end{thm}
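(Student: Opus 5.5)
The plan is to show that the principal curvatures are constant. Let $\lambda_1,\lambda_2,\lambda_3$ be the principal curvatures of $M^3\subset\mathbf{S}^4$. The data available are the mean curvature $H=\frac13\sum\lambda_i$ and, by the Gauss equation, the scalar curvature $R=6+9H^2-S$. Since both are constant, $S=\sum\lambda_i^2$ is constant as well. The only symmetric function of the $\lambda_i$ not yet known to be constant is $f_3=\sum\lambda_i^3$, or equivalently $\det A$.

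The first step is to pass to the traceless second fundamental form $\Phi=A-H\,\mathrm{Id}$. Its squared norm $|\Phi|^2=S-3H^2$ is constant. If $|\Phi|^2=0$, then $M^3$ is totally umbilic and we are done. Otherwise, on the open dense set where the multiplicities of the principal curvatures are locally constant, I will work with a local orthonormal frame of principal directions.

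The second step is to use the Codazzi equations $h_{ijk}=h_{ikj}$ together with the two constraints $\sum_i h_{iik}=0$ and $\sum_i\lambda_i h_{iik}=0$. These come from differentiating $H$ and $S$. For three distinct principal curvatures they form a Vandermonde-type system, which expresses every $h_{iik}$ in terms of $h_{kkk}$ and shows that $h_{123}$ is essentially the only remaining free quantity. I then feed this into Simons' identity
\[
\tfrac12\Delta|\Phi|^2=|\nabla\Phi|^2+|\Phi|^2\bigl(3+3H^2-|\Phi|^2\bigr)-\frac{3H}{\sqrt6}\,\bigl|\mathrm{tr}\,\Phi^3\bigr|\text{-type term}.
\]
More precisely, the identity reads $0=|\nabla A|^2+\sum_{i<j}(\lambda_i-\lambda_j)^2(1+\lambda_i\lambda_j)$, and in dimension three one can write $\sum_{i<j}(\lambda_i-\lambda_j)^2(1+\lambda_i\lambda_j)$ in terms of $S$, $H$ and $R$ alone, because $\lambda_i\lambda_j$ is tied to $R$. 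The Gauss equation gives $R_{ijij}=1+\lambda_i\lambda_j$. The condition $R\ge0$ should then make the zeroth-order term a nonnegative constant times something that is forced to vanish, so that $\nabla A\equiv0$, or at least $|\nabla A|^2$ is a constant expressible through $S,H,R$.

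The third step, which I expect to be the main obstacle, is to upgrade this to constancy of $f_3$. I would compute the Laplacian of $f_3=\mathrm{tr}A^3$ via the Codazzi equations and integrate over the closed manifold. Integrating $\Delta f_3$ and $\Delta(f_3^2)$ gives identities of the form $\int_M\bigl(P(\lambda)\,|\nabla A|^2+Q(\lambda)\bigr)=0$. With $S$, $H$ constant and $n=3$, the quantity $|\nabla A|^2$ is controlled by $|\nabla f_3|^2/\prod_{i<j}(\lambda_i-\lambda_j)^2$ from the second step. The sign condition $R\ge0$, equivalently $\sum_{i<j}\lambda_i\lambda_j\ge -3$, is what should make the relevant coefficient have a definite sign. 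The conclusion would then be $\nabla f_3\equiv0$.

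If this sign argument fails, the fallback is the de Almeida--Brito strategy. There one considers the Gauss--Kronecker-type function $\prod\Phi_i$ on $M$ and applies a Gauss--Bonnet/Euler characteristic argument to the principal line fields. The point is that, if the principal curvatures were nonconstant, one could derive an integral identity contradicting the vanishing of $\int_M$ of a divergence. I would then handle the set of points where two principal curvatures coincide separately. There, with multiplicities $(2,1)$, the Codazzi equations force the multiplicity-two eigenvalue to be constant along its eigendistribution, and constant $H$ and $S$ then pin both eigenvalues down. Finally, continuity and density give that the $\lambda_i$ are constant on all of $M$, so $M^3$ is isoparametric.
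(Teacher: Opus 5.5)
There is a genuine gap. Your reduction to $S$ constant and the Vandermonde description of the $h_{iik}$ are fine. The only nontrivial step, however, is showing that $f_3$ is constant on the open set $U$ where the three principal curvatures are distinct, and that step is never carried out; both mechanisms you propose for it fail.

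In your second step, the zeroth-order Simons term is \emph{not} a function of $S,H,R$. A direct computation gives
\begin{equation*}
\sum_{i<j}(\lambda_i-\lambda_j)^2(1+\lambda_i\lambda_j)=3S-S^2-9H^2+3Hf_3 ,
\end{equation*}
so for $H\neq 0$ it contains precisely the unknown $f_3$. For $H=0$ it is constant, but Simons then only gives $|\nabla A|^2=S(S-3)$, which is positive for $3<S\le 6$, a range allowed by $R\ge 0$. Cartan's minimal isoparametric hypersurface ($S=6$, $R=0$, $|\nabla A|^2=18$) shows that $\nabla A\equiv 0$ cannot be the outcome.

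Moreover, a pointwise value of $|\nabla A|^2$ says nothing about $f_3$. On $U$,
\begin{equation*}
|\nabla A|^2=\sum_i h_{iii}^2+3\sum_{i\neq k}h_{iik}^2+6h_{123}^2,
\end{equation*}
and $h_{123}$ is the free component you identified yourself. The same fact refutes the claim in your third step that $|\nabla A|^2$ is controlled by $|\nabla f_3|^2/\prod_{i<j}(\lambda_i-\lambda_j)^2$. That step also exhibits no identity and no coefficient on which $R\ge 0$ acts, so it is a hope rather than an argument.

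The missing idea is the de Almeida--Brito form, which is what this paper generalizes (Lemma~\ref{dPhi} and Section~\ref{prof main thm}). It is not a Gauss--Bonnet or Euler-characteristic argument; principal line fields are not global, since the frame degenerates where two principal curvatures meet. On $U$ the Vandermonde system gives $d\lambda_i=df_3\big/\bigl(3\prod_{k\neq i}(\lambda_i-\lambda_k)\bigr)$. The $2$-form
\begin{equation*}
\Psi=\theta_1\wedge\omega_{23}+\theta_2\wedge\omega_{31}+\theta_3\wedge\omega_{12}
\end{equation*}
does not depend on the choice of oriented principal frame, so it is globally defined on $U$. Using $d\theta_i=\sum_j\omega_{ij}\wedge\theta_j$, the Gauss equation and $\omega_{ij}=\sum_k\frac{h_{ijk}}{\lambda_i-\lambda_j}\theta_k$, the $h_{123}^2$-terms cancel, because $\sum_{\mathrm{cyc}}\frac{1}{(\lambda_i-\lambda_j)(\lambda_i-\lambda_k)}=0$. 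One gets
\begin{equation*}
d\Psi=\Bigl(\frac{R}{2}+\frac{|\nabla f_3|^2}{9\prod_{i<j}(\lambda_i-\lambda_j)^2}\Bigr)\,\mathrm{vol},
\end{equation*}
and this is the only place where $R\ge 0$ is used.

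The rest of the argument runs as follows.
\begin{enumerate}
\item Integrate $(\eta_\epsilon\circ f_3)\,d\Psi$ over $U=\{a<f_3<b\}$ and apply Stokes.
\item Check that the coefficients of $df_3\wedge\Psi$ are bounded below near $f_3^{-1}(a)$, where $\lambda_2=\lambda_3$, and bounded above near $f_3^{-1}(b)$, where $\lambda_1=\lambda_2$.
\item The boundary term is then at most $C\int|\eta_\epsilon'\circ f_3|\,|\nabla f_3|^2$, which tends to $0$ by Lemma~\ref{lemdB}.
\item This forces $\nabla f_3=0$ on $U$.
\end{enumerate}

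This also shows that your last paragraph misplaces the difficulty. On an open set of constant multiplicity $(2,1)$, constancy of $H$ and $S$ alone already fixes the eigenvalues. The real $(2,1)$ points are the extreme level sets of $f_3$, where the frame and $\Psi$ blow up. ``Continuity and density'' can only be invoked once $f_3$ is known to be constant on $U$, which is the whole content of the theorem.
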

Soon thereafter, Chang \cite{CSP93}, Cheng and Wan \cite{CW} independently proved that under the assumptions of the above theorem, one always has $R \geq 0$, thereby generalizing the aforementioned theorem.

In the case $n=4$, Lusala et al. \cite{LSS,MS} proved that closed minimal Willmore hypersurfaces with nonnegative constant scalar curvature in $\mathbf{S}^5$ are isoparametric, where the Willmore assumption here is equivalent to $f_3 = 0$. Deng et al. \cite{DGW} removed the nonnegative scalar curvature assumption and generalized this result. In addition, Tang and Yang \cite{TYang} proved that if the number of distinct principal curvatures is fixed, then any closed minimal hypersurface $M^n \subset \mathbf{S}^{n+1}$  with constant $3$-rd mean curvature $H_3$ and constant nonnegative scalar curvature $R$ must be isoparametric. 

In the case $n = 6$, Scherfner et al. \cite{SVW} proved that closed hypersurfaces in $\mathbf{S}^7$ with constant nonnegative scalar curvature are isoparametric if $H = f_3 = f_5 = 0$ and $f_4 = \text{const.}$, which is listed in \cite[Theorem 6]{SWY}.

For general dimension $n$, based on the method of \cite{dB}, Tang, Wei and Yan \cite{TWY} and Tang and Yan \cite{TY} proved the following theorem.
\begin{thm}$($\cite{TY}$)$\label{thTY}
	Let $M^n$ $(n>3)$ be a closed hypersurface in the unit sphere $\mathbf{S}^{n+1}$. If $R\geq 0$ and $\sum_{i=1}^{n}\lambda_{i}^k$ $(k=1,\cdots,n-1)$ are constants for principal curvatures $\lambda_{1}\leq\lambda_{2}\leq \cdots \leq \lambda_{n}$, then $M^n$ is isoparametric.
\end{thm}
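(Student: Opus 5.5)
Since the power sums $p_k=\sum_{i}\lambda_i^k$ for $k=1,\dots,n-1$ are constant, Newton's identities make the elementary symmetric functions $\sigma_1,\dots,\sigma_{n-1}$ of the principal curvatures constant as well. The only remaining unknown is $\sigma_n=\det A$. At each point the principal curvatures are therefore the $n$ roots of
$$P_t(x)=x^n-\sigma_1x^{n-1}+\cdots+(-1)^{n-1}\sigma_{n-1}x+(-1)^n t,\qquad t=\sigma_n(p).$$
The goal is to show that $t=\sigma_n$ is constant on $M$. Then all principal curvatures are constant, and $M^n$ is isoparametric.

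\emph{Step 1: reduce to one function.} Consider $g=\sigma_n$, a smooth function on the closed manifold $M$; it attains a maximum $t_{\max}$ and a minimum $t_{\min}$. On the open dense set where the multiplicities are locally constant, the $\lambda_i$ are smooth. Differentiating $P_t(\lambda_i)=0$ gives
$$P_t'(\lambda_i)\,\nabla\lambda_i=(-1)^{n+1}\nabla t .$$
So wherever $\nabla t\neq 0$, all $\lambda_i$ are simple and their gradients are proportional to $\nabla t$, with coefficients $1/P'(\lambda_i)$. These coefficients alternate in sign along the ordered roots. In Codazzi terms, $h_{iik}=e_k(\lambda_i)=c_i\,e_k(t)$ with explicit $c_i$.

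\emph{Step 2: a de Almeida--Brito type integral identity.} Following \cite{dB,TWY}, I would compute $\frac12\Delta S$ via Simons' identity, and the analogous Laplacians of $p_3,\dots,p_{n-1}$. Because these quantities are constant, their Laplacians vanish. This yields linear relations among the quantities
$$\sum_{i,j,k}\lambda_i^{m}h_{ijk}^2\quad\text{and}\quad \sum_{i}\lambda_i^{m}\,(\text{curvature terms}),\qquad m=0,\dots,n-2 .$$
Writing $h_{ijk}$ for distinct indices through the Codazzi equation and the connection forms, $h_{ijk}(\lambda_j-\lambda_k)=\omega$-terms, I would choose a linear combination of these identities, i.e.\ a polynomial $Q(\lambda)$ of degree $\le n-2$. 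The choice should annihilate the mixed terms $h_{ijk}$ with $i,j,k$ distinct, leaving
$$0=\sum_k\Big(\sum_i Q(\lambda_i)c_i^2\Big)(e_kt)^2+\sum_{i<j}K_{ij}\,\Phi(\lambda_i,\lambda_j).$$
Here the sectional curvature term $K_{ij}=1+\lambda_i\lambda_j$ multiplies a nonnegative factor. The natural candidate is the Lagrange-interpolation choice that makes $\sum_i Q(\lambda_i)/P'(\lambda_i)$ pick out the top coefficient. Integrating, and using the divergence structure of the $\Delta$ terms, gives an identity of the form
$$\int_M \Psi\,|\nabla t|^2 = -\int_M \sum_{i<j}(1+\lambda_i\lambda_j)\Phi_{ij}.$$

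\emph{Step 3: sign argument.} The hypothesis $R\ge0$ is used here. Since $R=n(n-1)+n^2H^2-S$ is constant and nonnegative, Gauss' equation together with the fact that all $\lambda_i$ are roots of $P_t$ should force the right-hand side to be $\le0$. It should also force $\Psi>0$ where $\nabla t\neq 0$ (the alternating signs of $P'(\lambda_i)$ combined with a squared weight). Then $\nabla t\equiv0$. A cleaner alternative, and the one I would try first, is the maximum principle. At a maximum point of $t$, the polynomial $P_t$ has a multiple root, because on the open set where all roots are simple, $t\mapsto$ roots is a local diffeomorphism argument in the style of \cite{dB}. One then shows from the Laplacian identity that $\Delta t$ has a strict sign there unless $t$ is locally constant, and finishes by connectedness.

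\emph{Main obstacle.} The hard part is Step 3: controlling the sign of the curvature terms $\sum(1+\lambda_i\lambda_j)\Phi_{ij}$. Even with $R\ge0$, individual sectional curvatures $1+\lambda_i\lambda_j$ may be negative. I expect this to require a careful inequality on the roots of $P_t$, exploiting the degree-$n$ structure and $n>3$. A secondary difficulty is the set where multiplicities jump. There the $\lambda_i$ are only Lipschitz, so I would treat it by working with $t$ (which is smooth) and using density of the regular set.
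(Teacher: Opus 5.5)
Your Step 1 is fine: a multiple root of $P_t$ can only occur when $t$ sits at an endpoint of the interval of $t$-values for which $P_t$ is real-rooted, so $\nabla t=0$ there. The proof breaks down in Steps 2--3, and the mechanism you propose for Step 2 cannot work.

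Because $H$ is constant, Simons' identity makes $(\Delta A)_{ii}$ an algebraic function of the $\lambda$'s. So each relation $\Delta p_m=0$ is purely pointwise,
\begin{equation*}
0=\frac{1}{m}\Delta p_m=\sum_i\lambda_i^{m-1}(\Delta A)_{ii}+\sum_{i,j,k}\frac{\lambda_i^{m-1}-\lambda_j^{m-1}}{\lambda_i-\lambda_j}\,h_{ijk}^2 ,
\end{equation*}
and nothing is left to integrate by parts.

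Take a combination $\sum_m a_m\frac1m\Delta p_m$ and set $q(x)=\sum_m a_mx^{m-1}$, so $\deg q\le n-2$. The coefficient of $h_{abc}^2$ ($a,b,c$ distinct) is $2\bigl(q[\lambda_a,\lambda_b]+q[\lambda_b,\lambda_c]+q[\lambda_a,\lambda_c]\bigr)$, where $q[x,y]$ is the divided difference. Suppose this vanishes for every triple. Fix $a\neq b$; the polynomial $x\mapsto q[\lambda_a,x]+q[\lambda_b,x]$ has degree $\le n-3$ and equals $-q[\lambda_a,\lambda_b]$ at the $n-2$ points $\lambda_c$, hence identically. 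Evaluating at $x=\lambda_a$ and $x=\lambda_b$ gives $q'(\lambda_a)=q'(\lambda_b)=-2q[\lambda_a,\lambda_b]$. So $q'$ is a constant $\kappa$ with $\kappa=-2\kappa$, i.e. $q$ is constant. Only the trivial identity $\Delta p_1=0$ survives, even if the coefficients may vary from point to point. Also, since $\sum_iQ(\lambda_i)c_i=0$ for $\deg Q\le n-2$, no $\Delta t$ term remains to be integrated either.

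The argument of \cite{dB,TWY,TY}, which this paper adapts in Lemma~\ref{dPhi} and in the proof of Theorem~\ref{main thm}, removes the mixed terms by a different device. It uses the $(n-1)$-form $\Phi_0=\sum_\sigma S(\sigma)\theta_{i_1}\wedge\cdots\wedge\theta_{i_{n-2}}\wedge\omega_{i_{n-1}i_n}$ on the set where the $\lambda_i$ are distinct.
In $d\Phi_0$:
\begin{itemize}
\item the structure equation replaces $d\omega$ by $\omega\wedge\omega-R_{ij}$;
\item products of $\Gamma$'s with three distinct indices cancel identically, by $h_{ijk}=(\lambda_i-\lambda_j)\Gamma_{ijk}$ and $\sum_{\mathrm{cyc}}\frac{1}{(\lambda_i-\lambda_j)(\lambda_j-\lambda_k)}=0$;
\item the curvature enters only through $\sum_{i\neq j}R_{ijij}=R$ (compare $\psi_3$ in Lemma~\ref{dPhi}, where the weight $\lambda_{i_{n-1}}+\lambda_{i_n}$ produces $f_3$ instead).
\end{itemize}
Hence $d\Phi_0$ is, up to sign, $(n-2)!\,R\,\mathrm{vol}$ plus a quadratic form in $dh$ of the same sign. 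This is exactly where $R\ge0$ is used, so your worry about individual sectional curvatures $1+\lambda_i\lambda_j$ never arises. Your Step 3 leaves this sign question open, so the proposal does not yet prove the theorem.

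Two further pieces are missing.

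First, $\Phi_0$ is only defined where the principal curvatures are distinct, and it degenerates near the level sets where two roots collide. ``Density of the regular set'' is therefore not enough. The actual argument runs as in the proof of Theorem~\ref{main thm}:
\begin{itemize}
\item apply Stokes to $d\bigl((\eta_\epsilon\circ h)\Phi_0\bigr)$, with $h=p_n$ (affine in your $t$);
\item write $dh\wedge\Phi_0=\sum_iu_ih_i^2\,\mathrm{vol}$;
\item prove one-sided bounds on the $u_i$ near the two extreme levels (the analogue of Lemma~\ref{bound});
\item dispose of the resulting boundary term with Lemma~\ref{lemdB}.
\end{itemize}

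Second, your maximum-principle alternative rests on a false claim. An extremum of $t$ need not occur at a multiple root: if the range of $t$ lies strictly inside the real-rootedness interval, no two principal curvatures ever coincide (this is the case handled in \cite{TWY}). Nothing in your sketch gives $\Delta t$ a strict sign at an extremum.
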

When $ n = 4$, the result of the above theorem was also obtained in \cite{SX}. Furthermore, for the minimal case, Cheng and Li \cite{CL} proved that if the number of distinct principal curvatures is constant, the assumption $ R \geq 0 $ on $M^4$  in Theorem \ref{thTY} is redundant. 
Based on the result of Cheng and Li \cite{CL},  using the method in \cite{dB}, He, Xu and Zhao \cite{HXZ} removed the requirement on the number of distinct principal curvatures in \cite{CL}. This also shows, in the four dimensional minimal case, that Theorem \ref{thTY} of Tang and Yan does not require the assumption of nonnegative scalar curvature. Therefore, one can naturally propose the following question: for general dimensions, whether the assumption $ R \geq 0 $ in Theorem \ref{thTY} can also be removed. This would bring the result closer to the statement of the Chern conjecture.

%To the best of the author's knowledge, Theorem \ref{thTY} provides the best known result for this type of rigidity problem for $5$-dimensional hypersurfaces in the unit sphere $\mathbf{S}^6$.

In this paper, for $ n = 5$ in the minimal case, we propose a new assumption on the principal curvatures (see $(\ref{Ass})$) that can replace the requirement of nonnegative scalar curvature  in Theorem \ref{thTY}. The method that we use generalizes the 3-form $\Phi$ in \cite{CL} to an $(n-1)$-form (see $(\ref{Phi})$)  and performs a crucial simplification of the differential of this $(n-1)$-form when $n=5$ (see Section \ref{simp}). %Moreover, Assumption $(\ref{Ass})$ only needs to hold at points with at least four distinct principal curvatures, or exactly three with multiplicities $(2,2,1) $; see Theorem \ref{main thm}. Besides, we show that under several distributions of principal curvatures, Assumption $(\ref{Ass})$ holds. %Therefore, for the hypersurface $ M^5 \subset \mathbf{S}^6$, provided that at every point with at least four distinct principal curvatures, or exactly three with multiplicities $(2,2,1)$, the principal curvatures fall into these specific patterns, we remove the nonnegative scalar curvature assumption in Theorem \ref{thTY}. 
Throughout this paper, we adopt the conventions that $\sigma_3\geq 0$ and $\lambda_{1}\leq \lambda_{2}\leq \lambda_{3}\leq \lambda_{4}\leq \lambda_{5}.$ Otherwise, we consider $\tilde{\lambda_{i}}=-\lambda_{i}$ by choosing the normal vector field in the opposite direction. For three distinct indices $1\leq i<j<k\leq 5$,  we define
%	\par\noindent
%	\vbox{\hsize=\linewidth\centering
	%$\displaystyle\begin{aligned}
		\begin{equation}\label{s1}
			s_1^{ijk}=\lambda_{i}+\lambda_j+\lambda_k,\;\;s_2^{ijk}=\lambda_i\lambda_j+\lambda_i\lambda_k+\lambda_j\lambda_k,\;\;s_3^{ijk}=\lambda_i\lambda_j\lambda_k,
		\end{equation}
		\begin{gather}\label{s}
			s^{ijk} =(\lambda_{i}-\lambda_{j})^2(\lambda_{i}-\lambda_{k})^2(\lambda_{j}-\lambda_{k})^2.
		\end{gather}
		%	\end{aligned}$\par}
	For $r=1,2,\cdots,5$, we set $I_r=\{1,2,3,4,5\}\setminus\{r\}$. 
	
	%Let $s^{123}_1=\lambda_{1}+\lambda_{2}+\lambda_{3}$, $s^{123}_2=\lambda_{1}\lambda_{2}+\lambda_{2}\lambda_{3}+\lambda_{1}\lambda_{3}$ and $s^{123}_3=\lambda_{1}\lambda_{2}\lambda_{3}$ denote the elementary symmetric polynomials in the eigenvalue $\lambda_{1},\lambda_{2},\lambda_{3}$ respectively.

The main result of this paper is the following theorem.
\begin{thm}\label{main thm}
	Let $M^5$ be a closed $5$-dimensional minimal hypersurface in the unit sphere $\mathbf{S}^6$ such that $S, f_3,f_4$ are constants. Suppose in addition 
	%\begin{itemize}
	%\item[$(1)$] $S, f_3,f_4$ are constants;
	%\item[$(2)$] $\operatorname{tr}(\mathcal{A})=0$, $\operatorname{tr}(\mathcal{A}^k)$ are constants for $k=2,3,4$;
	%\item[$(2)$]%Let $s_1^{ijk}=\lambda_{i}+\lambda_j+\lambda_k,s_2^{ijk}=\lambda_i\lambda_j+\lambda_i\lambda_k+\lambda_j\lambda_k,s_3^{ijk}=\lambda_i\lambda_j\lambda_k$ for three distinct indices $1\leq i<j<k\leq5$. In a region where at least four $\lambda_i$ are distinct or exactly three $\lambda_i$ are distinct with multiplicities $(2,2,1)$, 
	\begin{equation}\label{Ass}
		A(r)=\sum_{i,j,k\in I_r} s^{ijk}\left(\left(s_1^{ijk}\right)^2+2s_2^{ijk}-\sigma_2\right)
		\left(2 s_1^{ijk}  s_2^{ijk} -3s_3^{ijk}+2\sigma_3\right)>0	
	\end{equation}
	for $r=1,2,\cdots,5$ at every point where the principal curvatures consist of at least four distinct values, or exactly three distinct values with multiplicities $(2,2,1)$.
%\end{itemize}
Then $M^5$ is isoparametric.
%Then $\operatorname{tr}(\mathcal{A}^5)$ is a constant, and furthermore, $\lambda_{1},\lambda_{2},\cdots,\lambda_{5}$ are constants.
\end{thm}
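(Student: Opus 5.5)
Since $\sigma_1=0$ and $S,f_3,f_4$ are constant, Newton's identities make $\sigma_2,\sigma_3,\sigma_4$ constant. The only symmetric function of the principal curvatures that can still vary is $\sigma_5$, equivalently $f_5=\operatorname{tr}(\mathcal A^5)$. So the plan is to prove $f_5$ constant. Then the characteristic polynomial $t^5-\sigma_2 t^3\cdots$ has constant coefficients, all $\lambda_i$ are constant, and $M^5$ is isoparametric.

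To show $f_5$ is constant I follow the de Almeida--Brito / Cheng--Li strategy. Work first on the open dense set where the multiplicities of the principal curvatures are locally constant, with a local orthonormal frame $\{e_i\}$ of principal directions and dual coframe $\{\omega_i\}$. Differentiating the constant $f_k$ ($k=2,3,4$) gives $\sum_i \lambda_i^{k-1}h_{iim}=0$ for every $m$. Where the $\lambda_i$ are distinct this is a Vandermonde-type system: it expresses $h_{iim}$ through $d\sigma_5$, with coefficients given by Lagrange-interpolation factors $\prod_{j\ne i}(\lambda_i-\lambda_j)^{-1}$.

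I then build the $4$-form
\[
\Phi=\sum_r c_r\,\omega_1\wedge\cdots\wedge\widehat{\omega_r}\wedge\cdots\wedge\omega_5,
\]
with $c_r$ chosen as functions of the $\lambda$'s, essentially $h_{rrr}$ weighted by Vandermonde factors of $\{\lambda_i\}_{i\in I_r}$. The choice is made so that $\Phi$ extends smoothly across points where eigenvalues coalesce; this is why the factors $s^{ijk}$ appear.

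Next I compute $d\Phi$ using the structure equations, the Codazzi equations $h_{ijk}$ symmetric, and the Gauss equation through the Ricci identity $h_{iikk}-h_{kkii}=(\lambda_i-\lambda_k)(1+\lambda_i\lambda_k)$. The aim is the identity
\[
d\Phi=\Big(\sum_r A(r)\,Q_r+\text{terms that are squares of } h_{ijk}\text{ with a definite sign}\Big)\,dV,
\]
where each $Q_r\ge 0$ is a square built from $h_{rrr}$, or equivalently from $e_r(\sigma_5)$. The two factors in $A(r)$ come from this step. The factor $(s_1^2+2s_2-\sigma_2)$ arises from rewriting $\sum_{j\notin\{i,k,l\}}\lambda_j^2$ using $\sigma_1=0$ and the constant $\sigma_2$. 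The factor $(2s_1s_2-3s_3+2\sigma_3)$ arises from the curvature term $1+\lambda_i\lambda_k$ combined with the constant $\sigma_3$.

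Integrating over the closed manifold $M$ gives $\int_M d\Phi=0$ by Stokes' theorem. Since $A(r)>0$ wherever the hypothesis applies, every $h_{rrr}$ must vanish there, hence $d\sigma_5=0$ there.

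The remaining configurations are handled separately:
\begin{itemize}
\item At points with at most two distinct principal curvatures, or three with multiplicities $(3,1,1)$, the constancy of $\sigma_2,\sigma_3,\sigma_4$ already forces the eigenvalues to be locally constant. This is an algebraic count: the number of free multiplicity-constrained parameters is at most the number of constraints.
\item The boundary set where multiplicities jump is closed with empty interior. By continuity $f_5$ is locally constant on a dense open set, hence constant on the connected $M$.
\end{itemize}

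The main obstacle is the computation of $d\Phi$. It is not enough that its expansion is long. The mixed terms involving $h_{ijk}$ with three distinct indices and the derivative terms $h_{iikk}$ must collapse, specifically in dimension $5$, into the symmetric sums defining $A(r)$. I would first try to simplify using the Vandermonde relations, writing every $h_{iim}$ in terms of $h_{mmm}$. Then I would use Newton identities on each triple $\{i,j,k\}\subset I_r$ to express the leftover power sums via $s_1,s_2,s_3$ and the constants $\sigma_2,\sigma_3$. A second, smaller obstacle is checking that $\Phi$ stays smooth at points with multiplicity $(2,2,1)$, so that Stokes' theorem applies on all of $M$.
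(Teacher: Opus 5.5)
\textbf{The gap: Stokes on the closed manifold.} Your ansatz is of the right type. The failure is the step everything rests on: that the weights can be chosen so that $\Phi$ extends smoothly across points where principal curvatures coalesce, so that Stokes' theorem applies on the closed manifold. You give no argument for this, and it is not how the difficulty is resolved.

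On the set where the five $\lambda_i$ are distinct, the paper's form $\Phi=\sum_\sigma S(\sigma)(\lambda_{i_4}+\lambda_{i_5})\theta_{i_1}\wedge\theta_{i_2}\wedge\theta_{i_3}\wedge\omega_{i_4i_5}$ is exactly $\iota_V\mathrm{vol}$ with $V=\sum_j u_jh_je_j$. This holds because $\Gamma_{iji}=h_{iij}/(\lambda_i-\lambda_j)$ and each $h_{iij}$ is a rational function of the $\lambda$'s times $h_j=e_j(h)$. Here the $u_j$ are the coefficients in $dh\wedge\Phi=\sum_j u_jh_j^2\,\mathrm{vol}$. These weights have poles at coalescence: for example $u_2,u_3\to-\infty$ as $h$ decreases to its minimum $a$, where $\lambda_2=\lambda_3$. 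Nothing in your sketch shows that some other choice of weights is both smooth on $M$ and has a divergence of the shape you want.

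\textbf{The missing idea} is the de Almeida--Brito cutoff argument:
\begin{itemize}
\item Work on $Y=\{a<h<b\}$ and apply Stokes to $(\eta_\epsilon\circ h)\Phi$, with $\eta_\epsilon$ compactly supported in $(a,b)$.
\item Bound the error term $\int_Y(\eta_\epsilon'\circ h)\sum_iu_ih_i^2\,\mathrm{vol}$ using only one-sided bounds: $u_i\le C$ near $h=a$, where $\eta_\epsilon'\ge0$, and $u_i\ge -C$ near $h=b$, where $\eta_\epsilon'\le0$.
\item Kill $\int_Y|\eta_\epsilon'\circ h|\,|dh|^2\,\mathrm{vol}$ via $\int_{M\setminus Y_\epsilon}|\Delta h|\,\mathrm{vol}\to0$.
\end{itemize}

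\textbf{Role of the hypothesis at degenerate points.} This is exactly where that hypothesis enters, and your plan misreads it.
\begin{itemize}
\item At $(2,2,1)$ points every $A(r)$ vanishes identically. The strict inequality therefore forbids such points; there is no smoothness to check.
\item A degeneration $\lambda_4=\lambda_5$ forces $A(3)\le 0$ and is likewise excluded.
\item At the limiting configuration $\alpha_1<\alpha_2=\alpha_3<\alpha_4<\alpha_5$ on $h^{-1}(a)$, the condition $A(5)>0$ forces $\alpha_2=\alpha_3<0$. This is what makes $u_2,u_3$ blow up towards $-\infty$ rather than $+\infty$; the situation at $h=b$ is symmetric.
\end{itemize}

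\textbf{The identity for $d\Phi$.} This identity is the real content of the theorem. You only assert it, and the shape you predict is not what comes out. In the paper no second derivatives $h_{ijkl}$ appear at all.
\begin{itemize}
\item Curvature enters through $d\omega_{ij}=\sum_k\omega_{ik}\wedge\omega_{kj}-R_{ij}$. The constant part of $R_{ijij}=1+\lambda_i\lambda_j$ is killed by $\sum_{i\neq j}(\lambda_i+\lambda_j)=0$, and what survives is the zeroth-order term $-12f_3=-36\sigma_3$.
\item The squares $h_{ijk}^2$ with distinct indices do not come with a definite sign; they cancel exactly.
\item The factor $2s_1s_2-3s_3+2\sigma_3$ is a purely algebraic rewriting of $p_k=-2\lambda\lambda_k(\lambda-\lambda_k)-\prod_{i\neq k}\lambda_i$ with $\lambda=-\lambda_5$. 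It is not a remnant of $1+\lambda_i\lambda_k$.
\end{itemize}
The result is $d\Phi=-12\bigl(3\sigma_3+\frac{1}{25\prod_{i<j}(\lambda_i-\lambda_j)^2}\sum_rA(r)h_r^2\bigr)\mathrm{vol}$. So one must choose the unit normal with $\sigma_3\ge0$. Since $A(r)$ changes sign under $\lambda\mapsto-\lambda$, the hypothesis $A(r)>0$ only makes sense relative to that normalization, which your proposal never fixes.

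\textbf{A minor point.} ``Locally constant on a dense open set'' does not make a continuous function constant. Argue instead that the continuous $1$-form $df_5$ vanishes on a dense set, hence everywhere.
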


%$(\ref{Ass})$ is a technical assumption in the proof of Theorem \ref{main thm}. 
Now we give another characterization of $(\ref{Ass})$. 
%\begin{rem}\label{rem}
	%Then $(\ref{vec1})$ states that $v$ is always orthogonal to the vector $(1, -1, 1, -1)$, and $(\ref{vec2})$ states that the angle between $\omega$ and the vector $(1, -1, 1, -1)$ is always less than $ \frac{\pi }{2}$, with the specific value determined by the principal curvatures at the point $x\in M^5$. 
	 Lemma \ref{pvec} (see Section \ref{Bas})  implies that at each point of the hypersurface $M^5$, we have 
	 $$A(5)=  -\Bigl\langle  \left(z_4q_4, z_3q_3, z_2q_2, z_1q_1\right), \left(\lambda_{4}z_4q_4, \lambda_{3}z_3q_3, \lambda_{2}z_2q_2, \lambda_{1}z_1q_1\right) \Bigr\rangle. $$
%where $$v =,\,\omega = .$$  
	Therefore, %assumption 
	$A(5) > 0 $ is equivalent to the angle between vectors $(z_4q_4, z_3q_3, z_2q_2, z_1q_1)$ and $(\lambda_{4}z_4q_4, \lambda_{3}z_3q_3, \lambda_{2}z_2q_2, \lambda_{1}z_1q_1)$ lying between $\frac{\pi}{2}$ and $\pi$, where the two vectors are further constrained by $(\ref{vec1})$ and $(\ref{vec2})$, respectively. The remaining $A(i)$, $i\in I_5$, are similar.
%\end{rem}

%The above theorem holds without the assumption of nonnegative scalar curvature. After a detailed discussion of Assumption $(\ref{Ass})$, we have the following corollary. In this corollary, we provide some ranges for the principal curvatures within which Assumption $(\ref{Ass})$ holds. 
There are many configurations of principal curvatures that fit into assumption $\eqref{Ass}$. 
However, for the four principal curvature configurations in the following corollary, the scalar curvature $R$ may not  be nonnegative. %It would be of interest to find more configuration of principal curvatures  that satisfy $(\ref{Ass})$.%Therefore, in these cases, the assumptions $A(r)>0$ $(r=1,2,\cdots,5)$ are weaker than $R\geq 0.$
%This also shows there exist a large number of $\lambda_{i}$ $(i=1,2,\cdots,5)$ such that $A(r)>0$ for any $r$. 
%and by the continuity of $A(r)$ $(r=1,2,\cdots,5)$, it follows that in the neighborhood of these ranges, all $A(r)>0$ have the same sign.

%In the corollary, all principal curvature types we give satisfy the assumption 
%$\sigma_3\geq 0$.

\begin{cor}\label{cor}
	Let $M^5$ be a closed $5$-dimensional minimal hypersurface in the unit sphere $\mathbf{S}^6$ such that $S, f_3, f_4$ are constants.
  %Suppose that on a region of the manifold $ M $ with at least three distinct principal curvatures, the principal curvatures at each point belong to one of the four cases $(1'')$, $(2'')$, $(3'')$ or $(4'')$.
   At each point, suppose either there are at most three distinct principal curvatures with multiplicities not equal to $(2,2,1)$, or the configuration of principal curvatures belongs to one of the following four types: 
	\begin{itemize}
	
		\item[$(1)$] $\lambda_{1}<\lambda_{2}<\lambda_{3}=\lambda_{4}<0<\lambda_{5}$;
		\item[$(2)$] $\lambda_{1}=\lambda_{2}<\lambda_{3}<\lambda_{4}<\lambda_{5}$; 
		%\item[$(4'')$] $\lambda_{1}=\lambda_{2}<\lambda_{3}<\lambda_{4}<0<\lambda_{5}$. 
		\item[$(3)$] $\lambda_{1}<\lambda_{2}=\lambda_{3}<\lambda_{4}<0<\lambda_{5}$;
			\item[$(4)$] $\lambda_1<\lambda_{2}<\lambda_{3}<\lambda_{4}<0<\lambda_{5}$.
	\end{itemize}
	Then $M^5$ is isoparametric. 
\end{cor}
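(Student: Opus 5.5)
The plan is to deduce Corollary \ref{cor} directly from Theorem \ref{main thm}. Theorem \ref{main thm} only requires the inequality (\ref{Ass}) at points with at least four distinct principal curvatures, or with exactly three distinct ones of multiplicities $(2,2,1)$. At the excluded points, meaning at most three distinct principal curvatures with multiplicities other than $(2,2,1)$, nothing needs to be checked. So it suffices to show that $A(r)>0$ for every $r=1,\dots,5$ whenever the ordered principal curvatures $\lambda_1\le\dots\le\lambda_5$ fall into one of the types $(1)$--$(4)$.

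Throughout I will use the constraints available. Minimality gives $\sum\lambda_i=0$, and the normalization gives $\sigma_3\ge 0$. In particular $\sigma_2=-S/2<0$, so the first factor of each summand can be written as
\[
\left(s_1^{ijk}\right)^2+2s_2^{ijk}-\sigma_2=\left(s_1^{ijk}\right)^2+2s_2^{ijk}+\tfrac{S}{2}.
\]
For $\{i,j,k\}\subset I_r$, the two remaining indices $l,m$ satisfy $\lambda_l+\lambda_m=-s_1^{ijk}$. Hence $\sigma_2$ and $\sigma_3$ can be expanded in terms of $s_1,s_2,s_3$ of the triple together with $\lambda_l\lambda_m$. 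I expect this to simplify both factors. Roughly, the first factor should become a sum of squares plus a term in $\lambda_l\lambda_m$, and the second factor should be expressible through $s_2^{ijk}-\lambda_l\lambda_m$ and $s_3^{ijk}$.

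I will organize the verification by sign patterns. In types $(1)$, $(3)$ and $(4)$, four curvatures are negative and $\lambda_5>0$. Any triple inside $I_r$ with $r\ne 5$ contains $\lambda_5$ and two negatives, while $I_5$ consists only of negative values. For each such configuration I would determine the sign of each factor termwise, using the weights $s^{ijk}\ge0$. Triples with a repeated value give $s^{ijk}=0$ and drop out, which is why the multiplicity cases reduce to fewer terms. Where termwise positivity fails, I would pair terms, or use the identity for $A(5)$ as an inner product from Lemma \ref{pvec}, to show that the sum is positive. Type $(2)$, with $\lambda_1=\lambda_2$, has many vanishing $s^{ijk}$. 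Only triples using at most one of the indices $1,2$ survive, namely subsets of $\{1,3,4,5\}$ or $\{2,3,4,5\}$, which essentially reduces each $A(r)$ to one or a few explicit terms whose sign I can check from $\sigma_3\ge0$ and $\sum\lambda_i=0$.

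I expect the main obstacle to be the sign of the second factor $2s_1s_2-3s_3+2\sigma_3$ for the mixed triples containing $\lambda_5$, together with controlling cancellation in the sum over triples. My first attempt will be to rewrite $2s_1s_2-3s_3$ as $-\bigl((s_1)^3-\text{stuff}\bigr)$ using Newton identities. I would then express everything as a polynomial in the two complementary values $\lambda_l,\lambda_m$ and exploit that $\lambda_l+\lambda_m=-s_1$ has a definite sign. If that fails, I would fall back on a continuity and boundary argument: reduce each $A(r)$ on the cone defined by the type to its extreme rays, which are the degenerate multiplicity configurations, and verify positivity there and in the interior by monotonicity.
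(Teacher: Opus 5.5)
You correctly reduce the corollary to Theorem \ref{main thm}, which is also how the paper begins. But that is the trivial part. The entire content of the corollary is proving $A(r)>0$ for $r=1,\dots,5$ on types $(1)$--$(4)$, and your proposal only lists strategies you would try.

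\textbf{Termwise signs fail in the hard cases.} Let $\{l,m\}$ be the complement of the triple $\{i,j,k\}$. Using $\lambda_l+\lambda_m=-s_1^{ijk}$, the two factors become $2(s_1^{ijk})^2+s_2^{ijk}-\lambda_l\lambda_m$ and $2s_1^{ijk}\lambda_l\lambda_m-s_3^{ijk}$. The first has an indefinite quadratic part, not a sum of squares. These forms do give termwise positivity of $A(5)$ in type $(4)$, where $\lambda_1,\dots,\lambda_4<0<\lambda_5$. They fail for $r\le 4$. Take $(\lambda_1,\dots,\lambda_5)=(-4,-3,-2,-1,10)$, a type $(4)$ point with $\sigma_3=300$. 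The summand of $A(4)$ for the triple $\{2,3,5\}$ has factors $2$ and $-20$, so it is negative. The form from Lemma \ref{pvec} does not help termwise either: it gives $A(4)=-\lambda_5(\cdots)^2-\sum_{k\le 3}\lambda_k(\cdots)^2$, whose first term is negative. What is needed is a genuine domination estimate. The paper splits an enlargement of that negative term between the $k=2$ and $k=3$ summands and discards the $k=1$ summand. This yields $A(4)>(\lambda_1-\lambda_3)^2\mathcal{P}_5+(\lambda_1-\lambda_2)^2\mathcal{P}_6$. It then proves $\mathcal{P}_5,\mathcal{P}_6>0$ by a chain of derivative bounds in $d=-\lambda_1$, anchored at $d=c$.

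\textbf{The triple count for double roots is wrong where the difficulty lies.} In type $(2)$, $I_1=\{2,3,4,5\}$ and $I_2=\{1,3,4,5\}$ each contain only one of the indices $1,2$. So no summand of $A(1)$ or $A(2)$ vanishes. The same holds for $A(3),A(4)$ in type $(1)$ and for $A(2),A(3)$ in type $(3)$. Your idea does work when $r$ lies outside the repeated pair. There Lemma \ref{pvec} collapses $A(r)$ to $-2\mu$ times a nonzero square, where $\mu<0$ is the repeated value (in type $(2)$ because it is the minimum). For $r$ inside the pair, the paper eliminates $\lambda_5$ via $\sum\lambda_i=0$ and factors $A(r)$ as a square times a quintic ($\mathcal{P}_2$, $\mathcal{P}_3$, $\mathcal{P}_4$). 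It settles the sign of that quintic by homogenizing and iterated convexity, using region bounds such as $\lambda_3>-\frac13(\lambda_4+\lambda_5)$. This is a real polynomial inequality, not something read off from the signs of $\sigma_3$ and $\sum\lambda_i$.

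\textbf{The fallback is not valid as stated.} Checking positivity on the extreme rays and appealing to ``monotonicity'' proves nothing for polynomials of this degree. You must exhibit a variable in which the function is monotone on the whole region and compute its boundary value. Supplying exactly this is what the paper's identities $g(x_2,x_2)=(x_2-1)^2(\cdots)$ and $\mathcal{P}_5|_{d=c}=c(a-b)^2(\cdots)^2$ do.
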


	Here we point out that in $(2)$, if $\lambda_{3}$ is nonnegative, $\sigma_3$ may be less than zero. In all other cases,  $\sigma_3$ is always nonnegative. 
In fact,  substituting $\lambda_{5}=-\lambda$ and $\lambda_{1}=\lambda_{2}$ into $\sigma_3$, we get 
	$$\sigma_3=-\left(2\lambda_{2}+\lambda_{3}\right){\lambda_{4}}^2-\left(4{\lambda_{2}}^2+4\lambda_{2}\lambda_{3}+{\lambda_{3}}^2\right)\lambda_{4}-2\lambda_{2}{\lambda_{3}}^2-4{\lambda_{2}}^2\lambda_{3}-2{\lambda_{2}}^3.$$
	
	In the situation $\lambda_{3}<\lambda_{4}<0<\lambda_{5}$, the conclusion holds trivially.
	
	In the situation $\lambda_{3}<0<\lambda_{4}<\lambda_{5}$, the conclusion follows from
	$-\left(2\lambda_{2}+\lambda_{3}\right)>0$ and its discriminant 
	$\Delta=-8{\lambda_{2}}^3\lambda_{3}-8{\lambda_{2}}^2{\lambda_{3}}^2+{\lambda_{3}}^4<0.$
	
  Furthermore,  we obtain a global rigidity result.

\begin{thm}\label{two}
	Let $M^5\subset\mathbf{S}^6$ be a closed minimal hypersurface with constant scalar curvature $R$ and constant $4$-th mean curvature $H_4$. 
Suppose there is a point with two distinct principal curvatures of multiplicities $(m_1,m_2)$. If $(m_1,m_2)=(1,4)$, 
	suppose in addition that the $3$-rd mean curvature $H_3$ is constant.
 Then $S=5$ and $M^5$ is the Clifford torus $\mathbb{S}^2(\sqrt\frac{2}{5})\times\mathbb{S}^3(\sqrt\frac{3}{5})$ or $\mathbb{S}^1(\sqrt\frac{1}{5})\times\mathbb{S}^4(\sqrt\frac{4}{5})$.
\end{thm}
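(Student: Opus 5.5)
The plan is to show that the principal curvatures are constant on all of $M^5$, after which the classification of isoparametric hypersurfaces with two principal curvatures identifies $M^5$ as a Clifford torus. Since $M^5$ is minimal, $\sigma_1=0$. Constant scalar curvature $R$ means $\sigma_2=-S/2$ is constant, and constant $H_4$ means $\sigma_4$ is constant. In the $(1,4)$ case we also have $\sigma_3$ constant. Let $p$ be the point where there are exactly two distinct principal curvatures $\mu_1$ (multiplicity $m_1$) and $\mu_2$ (multiplicity $m_2$). Minimality gives $m_1\mu_1+m_2\mu_2=0$. Evaluating the constant $\sigma_2$ at $p$ fixes $|\mu_1|,|\mu_2|$ in terms of $S$. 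Evaluating the constant $\sigma_4$ (and $\sigma_3$ when needed) at $p$ then yields an algebraic relation among $S$, $m_1$, $m_2$. This pins down $\sigma_3(p)$, and hence the characteristic polynomial at $p$, as an explicit function of $S$.

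The key step is to show that the whole characteristic polynomial $t^5+\sigma_2t^3-\sigma_3t^2+\sigma_4t-\sigma_5$ is constant on $M$. For $(m_1,m_2)=(2,3)$ I would first check that the two-curvature configuration is a strict local extremum of $\sigma_3$ (or $\sigma_5$) on the compact set $\{\sigma_1=0,\sigma_2=c_2,\sigma_4=c_4\}\subset\mathbb{R}^5$. For a symmetric configuration $(\mu_1,\mu_1,\mu_2,\mu_2,\mu_2)$ this follows from a Lagrange-multiplier computation: with three constraints on five symmetric variables, a configuration with only two distinct values is critical for every remaining symmetric function. Then I would apply the standard argument. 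In the open set where the number of distinct principal curvatures is locally constant and maximal, use Codazzi and Gauss to write the Laplacian of the non-constant $\sigma_j$ in terms of the gradients. Combine this with Simons-type identities (as in Peng--Terng and Chang) and integrate over $M$ to conclude $\nabla\sigma_3=\nabla\sigma_5=0$.

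A more concrete alternative is to treat $F=\sigma_5$ (or $\sigma_3$) as a smooth function attaining an extremal value at $p$. Since the eigenvalue set at $p$ is degenerate, the quantities $\sum_i h_{iik}$-type terms force the local structure near $p$ to have constant multiplicities. A connectedness argument then shows that the set of points with exactly two principal curvatures of multiplicities $(m_1,m_2)$ is both open and closed: it is open because the constraints cut out an isolated point in the space of spectra, and closed by continuity. Once every point has two principal curvatures with fixed values, $M$ is isoparametric with $g=2$, and Cartan's classification gives a Clifford torus. Then $S=5$ follows from $\mu_1\mu_2=-1$, i.e. $\mu_1^2=m_2/m_1$ and $S=m_1\mu_1^2+m_2\mu_2^2=5$, and the radii match the stated ones.

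The main obstacle is the isolation/openness step. I need to show that the spectrum at $p$ is an isolated solution of the constraint system. Equivalently, nearby points of $M$ cannot split the multiple eigenvalues while keeping $\sigma_1,\sigma_2,\sigma_4$ (and $\sigma_3$) fixed. For $(2,3)$ the fixed values give three equations on five eigenvalues, so isolation is not automatic. There I would combine the constraints with the Chern--do Carmo--Kobayashi integral inequality $\int(S-5)S\ge 0$ and the Peng--Terng computation of $\Delta S$ involving $|\nabla h|^2$, which shows that constant $S$ together with the extremality at $p$ forces $\nabla h\equiv0$. For $(1,4)$, the extra constraint $\sigma_3$ constant gives four equations. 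These make the spectrum locally determined up to finitely many choices, so the fibre is discrete, and continuity plus connectedness yields global constancy of the spectrum.
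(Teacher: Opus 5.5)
You have the right endgame (show the same two-valued spectrum holds at every point, then use the $g=2$ classification). But the step that carries all the weight is never established.

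For $(m_1,m_2)=(2,3)$ you concede that isolation is ``not automatic'' and fall back on an unspecified ``standard argument'' integrating Simons-type identities to get $\nabla\sigma_3=\nabla\sigma_5=0$. That is not routine: making the remaining symmetric functions constant is the hard core of Chern-type problems. Compare Theorem \ref{main thm}, where even with $\sigma_1,\dots,\sigma_4$ constant the form $\Phi$ and the extra hypothesis $A(r)>0$ are needed. The inequality $\int_M(S-5)S\ge 0$ only gives $S\ge 5$ here. The claim that ``extremality at $p$ forces $\nabla h\equiv 0$'' is asserted with no mechanism. Your Lagrange remark (a two-valued configuration is critical for every symmetric function on $\{\sigma_1=0,\sigma_2=c_2\}$) is first-order information and proves no extremality.

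For $(1,4)$ your justification is wrong. Four equations on five eigenvalues leave $\sigma_5$ free, so the solution set is a curve, not a discrete fibre. Isolation there comes from real-rootedness: a quadruple root of $F_0(x)-\sigma_5$ cannot split into five real roots when only $\sigma_5$ moves. You never invoke this.

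The idea you are missing is that one point suffices. Since $S$ is constant, Simons' identity $\frac12\Delta S=(5-S)S+\sum h_{ijk}^2$ makes $\sum h_{ijk}^2\equiv S(S-5)$ constant. So $\nabla\operatorname{II}(p)=0$ already forces $S=5$. The Chern--do Carmo--Kobayashi/Lawson rigidity then gives the Clifford torus, with the factors read off from the multiplicities at $p$.

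The paper proves $\nabla\operatorname{II}(p)=0$ by differentiating the constancy of $H$, $S$, $f_4$ (and $f_3$ in the $(1,4)$ case) once and twice at $p$ in a diagonalizing frame. The first derivatives give $h_{11k}+h_{22k}+h_{33k}=0=h_{44k}+h_{55k}$ (resp.\ $h_{55k}=0$). A suitable linear combination of the second-derivative identities cancels the fourth-order terms and the cross-block terms $h_{ijl}$ with $i,j$ in different eigenspaces. What remains is a positive definite form in the $h_{iil}$ and the within-block $h_{ijl}$. The cross-block components then vanish by the symmetry of $h_{ijk}$.

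Alternatively, your openness route can be repaired with one algebraic observation. By Lagrange multipliers, critical points of $\sum\lambda_i^4$ on $\{\sum\lambda_i=0,\ \sum\lambda_i^2=S\}\subset\mathbb{R}^5$ take at most three values. The critical values are $\frac{7}{30}S^2,\frac14 S^2,\frac12 S^2,\frac{13}{20}S^2$ for the types $(2,3),(2,2,1),(3,1,1),(1,4)$. Hence the two-valued configurations are exactly the global minimum and maximum, and the set $\{\sigma_1=0,\sigma_2=c_2,\sigma_4=c_4\}$ you proposed to extremize over is finite. Constancy of $f_4$ then forces the same spectrum at every point, with the global sign fixed by continuity. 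This even makes the $H_3$ hypothesis unnecessary.
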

%\begin{rem}
%	Here we point out that, as can be seen from the proof of Theorem \ref{two}, the hypothesis 
%	$f_3=\text{const.}$ is not needed for the case where the principal curvatures satisfy $\lambda_{1}=\lambda_{2}=\lambda_{3}<\lambda_{4}=\lambda_{5}$.
%\end{rem}

The rest of this paper is organized as follows. In Section \ref{Bas}, we give some preliminaries and establish some lemmas of this paper.  In Section \ref{simp}, we show our core lemma (Lemma \ref{corelem}). In Section \ref{Global thm}, we prove Theorem \ref{main thm}, Corollary \ref{cor} and Theorem \ref{two}.

\section{Preliminaries}\label{Bas}
In this section, we assume that $M^n$ is connected and oriented. Otherwise, we can discuss the situation on each connected component of $M^n$ or on the double covering of $M^n$.
\subsection{Basic Knowledge}
Let $f:M^n\rightarrow \mathbf{S}^{n+1}$ be an $n$-dimensional immersed hypersurface, and let $\{e_1,e_2,\cdots,e_{n+1}\}$ be an oriented local orthonormal frame field of $\mathbf{S}^{n+1}$ such that $e_1,e_2,\cdots,e_n$ are tangent to $M^n$. We use $\{\theta_i,i=1,2,\cdots,n\}$ and  $\{\omega_{ij}, 1\leq i,j\leq n\}$ to denote the dual $1$-forms and connection $1$-forms corresponding to $\{e_1,e_2,\cdots,e_n\}$, respectively. Then the structure equations of $M^n$ are given by:
$$
\begin{cases}
	d\theta_i=\sum_{j=1}^{n}\omega_{ij}\wedge\theta_j,\\
	d\omega_{ij}=\sum_{k=1}^{n}\omega_{ik}\wedge\omega_{kj}-R_{ij},
\end{cases}
$$
where $R_{ij}=\frac{1}{2}\sum_{k,l=1}^{n}R_{ijkl}\theta_k\wedge\theta_l$ denote the curvature $2$-forms of $M^n$.

Let $\operatorname{II}=\sum_{i,j=1}^{n}h_{ij}\theta_i\otimes\theta_j$ denote the second fundamental form, 
%$$\operatorname{II}=\sum_{i,j=1}^{n}h_{ij}\theta_i\otimes\theta_j$$
then the mean curvature is given by
$$H=\frac{1}{n}\sum_{i=1}^nh_{ii}.$$
Let $S=|\operatorname{II}|^2=\sum_{i,j=1}^{n}h_{ij}^2$ be the square length of the second fundamental form. Then the Gauss equation implies that 
% i.e.$$S=\sum_{i,j=1}^{n}h_{ij}^2.$$
%It is easy to see that for a minimal hypersurface $M$ in $\mathcal{S}^{n+1}$
\begin{equation}\label{gauss}
	R_{ijkl}=\delta_{ik}\delta_{jl}-\delta_{il}\delta_{jk}+h_{ik}h_{jl}-h_{il}h_{jk},
\end{equation}
$$R=n(n-1)+n^2H^2-S,$$
where $R$ is the scalar curvature of $M$. 

Define the covariant derivative $\nabla\operatorname{II}$ of $\operatorname{II}$ $($with component $h_{ijk}$$)$ by 
$$\sum_{m=1}^{n}h_{ijm}\theta_m=dh_{ij}+\sum_{m=1}^nh_{mj}\omega_{mi}+\sum_{m=1}^nh_{im}\omega_{mj}.$$
Then by Codazzi equation we have
\begin{equation}\label{coda}
h_{ijk}=h_{ikj}\ \mbox{for} \ i,j,k=1,2,\cdots,n.
\end{equation}
It implies immediately that $h_{ijk}$ is symmetric, and when $M$ is minimal, from \cite{PT1}, we know 
\begin{equation}\label{DeltaS}
	\frac{1}{2}\Delta S=(n-S)S+\sum_{i,j,k=1}^nh_{ijk}^2.
\end{equation}

Next we exterior differentiate the above formula and define $h_{ijkl}$ by 
$$\sum_{m=1}^{n}h_{ijkm}\theta_m=dh_{ijk}+\sum_{m=1}^{n}h_{mjk}\omega_{mi}+\sum_{m=1}^{n}h_{imk}\omega_{mj}+\sum_{m=1}^{n}h_{ijm}\omega_{mk},$$
and we define $f_3$ and $f_4$ as
$$f_3=\sum_{i,j,k=1}^nh_{ij}h_{jk}h_{ki},\quad f_4=\sum_{i,j,k,l=1}^{n}h_{ij}h_{jk}h_{kl}h_{li}.$$

For an arbitrary fixed point $x\in M^n$, we take an orthonormal frame such that $h_{ij}=\lambda_{i}\delta_{ij}$ at $x$, for all $i,j=1,2,\cdots,n$. %where $\lambda_{i}$ is principal curvature on $p$. 
Then at this point $x$, we have
$$f_3=\sum_{i=1}^{n}\lambda_{i}^3,\quad f_4=\sum_{i=1}^{n}\lambda_{i}^4,\quad H=\frac{1}{n}\sum_{i=1}^{n}\lambda_{i},\quad S=\sum_{i=1}^{n}\lambda_{i}^2,$$
and we define the smooth function 
$$ h=\sum_{i=1}^{n}\lambda_{i}^n.$$
%denote the second fundamental form and the mean curvature of the immersed manifold $M$ respectively. 

\begin{defn}\cite{dB}
	The combination $(U,\theta)$ is admissible if 
	\begin{itemize}
		\item $U$ is an open subset of $Y$, where $Y$ is given in  $(\ref{space})$;
		\item $\theta=(\theta_1,\theta_{2},\cdots,\theta_n)$ is a smooth orthonormal coframe field on $U$;
		\item $\theta_{1}\wedge\theta_{2}\wedge\cdots\wedge\theta_n=vol$ on $U$, where vol is the volume form of $U$;
		\item $\operatorname{II}=\sum_{i=1}^{n}\lambda_{i}\theta_i\otimes\theta_i.$
	\end{itemize}
\end{defn}
In this paper, we choose a proper system on $M^n$ such that $(U,\theta)$ is admissible. Then the connection form $\omega_{ij}$ on $U$ are uniquely determined and $h_{ij}=\lambda_{i}\delta_{ij}.$
In this admissible chart, we suppose $\lambda_{1}\leq \lambda_{2}\leq \cdots\leq \lambda_n$. Then we define an $(n-1)$-form $\Phi$ as follows, which is the key point of our proof.
\begin{equation}\label{Phi}
	\Phi=\sum_{\sigma}S(\sigma)(\lambda_{i_{n-1}}+\lambda_{i_n})\theta_{i_1}\wedge\theta_{i_2}\wedge\cdots\theta_{i_{n-1}}\wedge\omega_{i_{n-1}i_{n}},
\end{equation}
where $\sigma(1,\cdots,n)=(i_1,\cdots,i_n)$ is a permutation and $S(\sigma)$ is the sign of $\sigma$. By \cite{dB}, we know the $(n-1)$-form $\Phi$ is globally well-defined on $M^n$. In fact, every 
$\theta_{i_1}\wedge\theta_{i_2}\wedge\cdots\wedge\theta_{i_{n-1}}\wedge\omega_{i_{n-1}i_{n}}$
is well-defined.

Let $\sigma_r:\mathbb{R}^n\rightarrow\mathbb{R}$ be the elementary symmetric functions defined by 
$$\sigma_r(\lambda_{1},\cdots,\lambda_{n})=\sum_{i_1<i_2<\cdots<i_r}\lambda_{i_1}\lambda_{i_2}\cdots\lambda_{i_r}\ \mbox{for}\ 1\leq r\leq n,$$
and then define the $r$-th mean curvature by 
$$H_r=\frac{1}{C^r_n}\sigma_r.$$

%In the following contents of this subsection, we only consider $5$-dimensional manifolds.

Now we define the region $\Omega$ as follows.

\begin{equation*}\label{Om}
	\Omega=\left\{x\in M^n\middle|\sum_{i=1}^{n}\lambda_i^j(x)=c_j,\ \forall j=1,2,\cdots,n-1\,\text{and}\ \lambda_{1}(x)<\lambda_{2}(x)<\cdots<\lambda_{n}(x)\right\},
\end{equation*}
where $c_1,c_2,\cdots,c_{n-1}$ are constants. 

 The functions $\lambda_{i}$ $(i=1,2,\cdots,n)$ are smooth on $\Omega$.  
Thus we have
\begin{equation}\label{dlam}
	d\lambda_{i}=\sum_{j=1}^{n}\lambda_{ij}\theta_j,
\end{equation}
and $\lambda_{ij}$ are smooth functions on $\Omega$. In addition, we express connection coefficients of the connection form $\omega_{ij}$ as 
\begin{equation}\label{conform}
	\omega_{ij}=\sum_{k=1}^{n}\Gamma_{ijk}\theta_k,
\end{equation}
where $\Gamma_{ijk}=\omega_{ij}(e_k)$ for $i,j=1,2,\cdots,n$. From \cite{TWY}, we obtain
\begin{equation}\label{Gamma}
	h_{iik}=\lambda_{ik}\ \mbox{and}\ h_{ijk}=\left(\lambda_{i}-\lambda_{j}\right)\Gamma_{ijk}\ \mbox{for}\ i\neq j,
\end{equation}
and furthermore
\begin{equation}\label{lam1}
	\lambda_{ij}=(-1)^{n+1}\frac{h_j}{n}\cdot\frac{1}{\prod_{k=1; k\neq i}^{n} (\lambda_k - \lambda_i)},
\end{equation}
where $h_j$ is defined by 
\begin{equation}\label{dh}
	dh=\sum_{j=1}^{n}h_j\theta_j.
\end{equation}

%Next we introduce some notations and conclusions for hypersurfaces $M^n$  in the unit sphere $\mathbf{S}^{n+1}$ \cite{PT}, which will be needed in the proof of Theorem \ref{two}.

%For an arbitrary fixed point $x\in M$,
%let $\{e_1,\cdots,e_n\}$ and $\xi$ be orthonormal bases of
%$T_xM$ and $N_xM$, respectively.
%To better explain the assumptions
%$A(r)>0$ for all $r=1,2,\cdots 5$, 
In the following lemma, we provide another expression for 
$A(r)$.  We define $$v_k=\prod_{i,j=1;i<j;i,j\neq k}^4(\lambda_{i}-\lambda_{j}), v=\prod_{i,j=1;i<j}^{4}(\lambda_{i}-\lambda_{j})\ \mbox{and}\	t_k=(-1)^k\prod_{i=1;i\neq k}^{4}(\lambda_{i}-\lambda_k)$$
for $k=1,2,3,4.$
%	Let \begin{equation*}
%	\begin{aligned}
	%	t_k=(-1)^k\prod_{i=1;i\neq k}^{4}(\lambda_{i}-\lambda_k).\\
		%t_4=&\prod_{i=1}^{3}(\lambda_{i}-\lambda_{4}),\, t_3=-\prod_{i=1,2,4}(\lambda_{i}-\lambda_{3}),\,
		%t_2=\prod_{i=1,3,4}(\lambda_{i}-\lambda_{2}),\,
		%	t_1=-\prod_{i=2}^4(\lambda_{i}-\lambda_{1}).\\
		%t_1=&(\lambda_{1}-\lambda_{4})(\lambda_{2}-\lambda_{4})(\lambda_{3}-\lambda_{4}),\\
		%t_2=&(\lambda_{1}-\lambda_{3})(\lambda_{2}-\lambda_{3})(\lambda_{3}-\lambda_{4}),\\
		%t_3=&(\lambda_{1}-\lambda_{2})(\lambda_{2}-\lambda_{3})(\lambda_{2}-\lambda_{4}),\\
		%t_4=&(\lambda_{1}-\lambda_{2})(\lambda_{1}-\lambda_{3})(\lambda_{1}-\lambda_{4}).
%	\end{aligned}
%\end{equation*}
\begin{lem}\label{pvec}
	%Define  %$$z_4=\!\!\!\!\prod_{i,j=1,i<j}^3\!\!\!(\lambda_{i}-\lambda_{j}), z_3=\!\!\!\!\prod_{\substack{i<j\\i,j=1,2,4}}\!\!\!(\lambda_{i}-\lambda_{j}),z_2=\!\!\!\!\prod_{\substack{i<j\\i,j=1,3,4}}\!\!\!(\lambda_{i}-\lambda_{j}),$$
	%$$z_1=\!\!\!\!\prod_{\substack{i<j\\i,j=2,3,4}}\!\!\!(\lambda_{i}-\lambda_{j}),$$
Define $q_i$ $(i=1,2,3,4)$ by equation  $(\ref{qk})$, and $p_i$ $(i=1,2,3,4)$ by equation $(\ref{p})$ respectively. Then we have
	\begin{equation*}
		A(5)=-\left(\lambda_{4}v_4^2q_4^2+\lambda_{3}v_3^2q_3^2+\lambda_{2}v_2^2q_2^2+\lambda_{1}v_1^2q_1^2\right).
	\end{equation*}
	%where $q_i$ $(i\in I_5)$ are given in 
	Moreover, $A(i)$ $(i=1,2,3,4)$ is obtained by replacing $\lambda_{i}$ in $A(5)$ with $\lambda_{5}$. 
\end{lem}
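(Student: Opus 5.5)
The plan is to reduce the claim to a polynomial identity in $\lambda_1,\dots,\lambda_4$, after eliminating $\lambda_5$ by minimality, $\lambda_5=-(\lambda_1+\lambda_2+\lambda_3+\lambda_4)$. First I would reorganize $A(5)$ according to the index of $\{1,2,3,4\}$ that is omitted. For $r=5$ the triples $\{i,j,k\}\subset I_5$ are exactly the complements of a single index $m\in\{1,2,3,4\}$. For such a triple, $s^{ijk}$ is the square of the product of the pairwise differences of the three remaining curvatures, so $s^{ijk}=v_m^2$ by the definition of $v_m$. Writing $B_m=(s_1)^2+2s_2-\sigma_2$ and $C_m=2s_1s_2-3s_3+2\sigma_3$ for the complementary triple, this gives
\[
A(5)=\sum_{m=1}^4 v_m^2\,B_m\,C_m .
\]
The statement then follows if I can establish, for each $m$,
\[
B_mC_m=-\lambda_m q_m^2 .
\]
If this fails term by term, the fallback is to show the identity for the sum, where the cross terms cancel in pairs against the $v_m^2$ weights.

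Second, I would express $\sigma_2,\sigma_3$ through the triple invariants and $\lambda_m$, using $\lambda_m+\lambda_5=-s_1$. This gives
\[
\sigma_2=s_2-s_1^2-\lambda_m(s_1+\lambda_m),\qquad \sigma_3=s_3-s_1s_2-\lambda_m s_1(s_1+\lambda_m),
\]
and therefore
\[
B_m=2s_1^2+s_2+\lambda_m s_1+\lambda_m^2,\qquad C_m=-s_3-2\lambda_m s_1(s_1+\lambda_m).
\]
Next I would substitute the definitions of $q_m$ from $(\ref{qk})$ and $p_m$ from $(\ref{p})$, which I expect to be built from exactly these reduced quantities, possibly via $t_k$ and $v$. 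I would then check the identity by expanding both sides as polynomials in $\lambda_m$ and $s_1,s_2,s_3$. Because $s_1,s_2,s_3$ are algebraically independent functions of the three remaining curvatures, the comparison can be done coefficient by coefficient in $\lambda_m$.

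This verification is the main obstacle. The products $B_mC_m$ are not visibly of the form $-\lambda_m\cdot(\text{square})$. The identity therefore probably requires using the constancy relations only through minimality, and possibly also the relation linking $q_m$ to $p_m$ (the role of $(\ref{vec1})$ and $(\ref{vec2})$). If a direct per-$m$ factorization does not appear, I would try the following: write $v_m q_m$ in terms of $t_k$ and $v$, using $v=\pm v_m t_m$, to convert the sum into a symmetric rational expression whose denominators cancel. That reduces the claim to a symmetric-polynomial identity in four variables, which can be checked by comparing the coefficients of the leading monomials.

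Finally, the statement for $A(i)$, $i=1,2,3,4$, follows by symmetry. The definition of $A(r)$ and the minimality condition are invariant under permutations of $\{1,\dots,5\}$. Transposing $i$ and $5$ maps $I_5$ to $I_i$, so applying the computation above with the roles of $\lambda_i$ and $\lambda_5$ exchanged gives $A(i)$ as the same expression with $\lambda_i$ replaced by $\lambda_5$.
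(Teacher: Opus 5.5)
\textbf{Verdict: genuine gap.} Your reduction is correct and is the paper's. For $r=5$, the triple complementary to $m$ has $s^{ijk}=v_m^2$, and your formulas give $B_m=q_m$ and $C_m=p_m$. Hence $A(5)=\sum_{m=1}^4 v_m^2p_mq_m$, which is \eqref{eql} combined with \eqref{eqp4} and \eqref{eqq4}.

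The gap is the step you present as the main route: the per-$m$ identity $B_mC_m=-\lambda_mq_m^2$ is false. Your own expressions give
\begin{equation*}
C_m+\lambda_mB_m=\lambda_m^3-s_1\lambda_m^2+s_2\lambda_m-s_3=\prod_{i\le 4,\ i\neq m}(\lambda_m-\lambda_i),
\end{equation*}
so $p_mq_m=-\lambda_mq_m^2+q_m\prod_{i\neq m}(\lambda_m-\lambda_i)$. A coefficient comparison in $\lambda_m$ would only confirm this discrepancy. For instance, at $(\lambda_1,\dots,\lambda_5)=(-4,-3,-2,-1,10)$ you get $p_4q_4=40392$ but $-\lambda_4q_4^2=39204$.

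Everything therefore rests on your fallback, and there the actual mechanism is missing. The extra terms do not cancel in pairs. Since $v_m\prod_{i\neq m}(\lambda_m-\lambda_i)=(-1)^{m-1}v$, they sum to
\begin{equation*}
\sum_{m=1}^4 v_m^2q_m\prod_{i\neq m}(\lambda_m-\lambda_i)=-v\left(v_4q_4-v_3q_3+v_2q_2-v_1q_1\right).
\end{equation*}
So the lemma reduces to the four-term identity \eqref{vec1}, and you give no argument for it. Checking leading monomials cannot establish a polynomial identity.

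One clean proof is as follows. Let $e_1,e_2$ be the first two elementary symmetric functions of $\lambda_1,\dots,\lambda_4$. Then $q_m=3\lambda_m^2-4e_1\lambda_m+2e_1^2+e_2$. Up to sign, $v_4q_4-v_3q_3+v_2q_2-v_1q_1$ equals the $4\times 4$ determinant with rows $(1,\lambda_m,\lambda_m^2,q_m)$. Its last column is a linear combination of the first three, so it vanishes.

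With these two ingredients (the cubic remainder, which the paper writes in terms of $t_m$, and the vanishing alternating sum), your outline becomes exactly the paper's proof. Your symmetry argument for $A(i)$, $i\le 4$, is fine.
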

\begin{proof}
Since
	\begin{equation}\label{vec1}
	v_4q_4-v_3q_3+v_2q_2-v_1q_1=0,
\end{equation}
from $(\ref{eql})$ and $(\ref{Al})$, we obtain
	\begin{equation*}
		\begin{aligned}
			A(5)=&v_4^2p_4q_4+v_3^2p_3q_3+v_2^2p_2q_2+v_1^2p_1q_1\\
			=&-v_4^2q_4\left(t_4+\lambda_{4}q_4\right)-v_3^2q_3\left(-t_3+\lambda_{3}q_3\right)-v_2^2q_2\left(t_2+\lambda_{2}q_2\right)-v_1^2q_1\left(-t_1+\lambda_{1}q_1\right)\\
			=&-v\left(v_4q_4-v_3q_3+v_2q_2-v_1q_1\right)-\lambda_{4}v_4^2q_4^2-\lambda_{3}v_3^2q_3^2-\lambda_{2}v_2^2q_2^2-\lambda_{1}v_1^2q_1^2\\
			=&-\left(\lambda_{4}v_4^2q_4^2+\lambda_{3}v_3^2q_3^2+\lambda_{2}v_2^2q_2^2+\lambda_{1}v_1^2q_1^2\right).
		\end{aligned}
	\end{equation*}
		
		Besides, by direct calculation, the following equation holds.
	\begin{equation}\label{vec2}
		-\lambda_{4}v_4q_4+\lambda_{3}v_3q_3-\lambda_{2}v_2q_2+\lambda_{1}v_1q_1=3v>0.
	\end{equation}
\end{proof}

\subsection{Preparations for the proof of Theorem \ref{main thm}}\label{pre}
%In this subsection, similar to \cite{TY},  in order to prove the main theorem, we classify the points on the manifold $M^5$ according to the principal curvatures. Furthermore, when the region of the manifold where all principal curvatures are  distinct is nonempty, we further divide this region using the range of a smooth function $h$ on $M$.

%In this subsection, we only consider the case $n=5$. 
Firstly, when $n=5$, we give the characteristic polynomial $F(x)$ %of the $(1,1)$-tensor field $\mathcal{A}$ 
and its relationship with the smooth function $h$.

\begin{equation*}
	F(x)=\prod_{i=1}^5(x-\lambda_{i})=x^5-\sigma_1x^4+\sigma_2x^3-\sigma_3x^2+\sigma_4x-\sigma_5.
\end{equation*}

From Newton's formula and the assumptions of Theorem \ref{main thm}, it follows that $\sigma_i$ $(i=1,2,3,4)$ are constants, and
$$\sigma_5=\frac{h}{5}+C_h,$$
where $C_h=-\frac{1}{6}Sf_3$ is a constant.

 Let
\begin{equation*}
	F_0(x)=x^5-\sigma_1x^4+\sigma_2x^3-\sigma_3x^2+\sigma_4x.
\end{equation*}
Obviously, $F_0(x)$ is a well-determined polynomial of degree $5$, and 
$$F(x)=F_0(x)-\frac{h}{5}-C_h.$$

Since $M^5$ is closed, we have that the range of $h$ is a closed interval, denoted by $\operatorname{Im}h=[a_0,b_0]$, $a_0\leq b_0$. 

From the definition of $\Omega$, it follows that $F(x)$ has $5$ distinct real roots $\lambda_{1}<\lambda_{2}<\lambda_{3}<\lambda_{4}<\lambda_{5}$ on $\Omega$. By Rolle's theorem, there exist $\tau_i$ $(i=1,2,3,4)$ lying between these roots such that
$$F_0'(\tau_i)=F'(\tau_i)=0 \ \mbox{for}\ i=1,2,3,4.$$
Therefore, $\tau_i$ $(i=1,2,3,4)$ are the extreme points of $F_0(x)$. Define 
$$b'=\min\{F_0(\tau_1),F_0(\tau_3)\}\ \mbox{and}\ a'=\max\{F_0(\tau_2),F_0(\tau_4)\},$$
as shown in Figure \ref{fig1}.

\begin{figure}[h]
	\centering
	\begin{tikzpicture}
		\begin{axis}[
			axis lines = none,
			xmin = -2.2, xmax = 2.2,
			ymin = -2.5, ymax = 2.5,
			domain = -2.2:2.2,
			samples = 100,
			width = 8cm,        % ?12cm???8cm
			height = 3.5cm,       % ?8cm???6cm
			clip = false
			]
			% ??????????
			\addplot[black, thick] {x^5 - 5*x^3 + 4*x};
			
			% ???????????
			\draw[black, dashed] (axis cs:-2.2,-1.4187) -- (axis cs:2.2,-1.4187)
			node[pos=0.98, above] {$F_0 = a'$};
			\draw[black, dashed] (axis cs:-2.2,1.4187) -- (axis cs:2.2,1.4187)
			node[pos=0.98, above] {$F_0 = b'$};
			
			% ????
			\addplot[mark=*, mark size=1.5pt, black, only marks] coordinates {
				(0.5439, 1.4187)
				(-0.5439, -1.4187)
			};
		\end{axis}
	\end{tikzpicture}
	\caption{ Function $F_0$.}
	\label{fig1}
\end{figure}

From the fact that $F(x)$ has $5$ distinct roots on $\Omega$, we have $b'>a'$. Furthermore, as described in \cite{TY}, for any $\xi\in [a_0,b_0]$, the equation 
$$F_0(x)-\frac{1}{5}\xi-C_h=0$$ has $5$ real roots. Let $b=5\left(b'-C_h\right)$ and $a=5\left(a'-C_h\right)$. We have
$$\operatorname{Im}h=[a_0,b_0]\subset[a,b].$$

For the case $a_0>a$, $b_0<b$, combined with the proof in this paper, Theorem \ref{main thm} can be proved by an argument as in \cite{TWY}. For the other cases, as described in \cite{TY}, it suffices to consider that $a_0=a$ and $b_0=b$. In this case, we have $\operatorname{Im}h=[a,b]$, and when $h=a$ (or $b$), $F_0(x)$ attains the maximum of all the local minimum values (or the minimum of all the local maximum values). 

Next, following \cite{dB}, we perform a region division on the manifold: $M^5=X\cup Y\cup Z$, where
\begin{equation}\label{space}
	\begin{aligned}
		&X:=\{x\in M^5: h(x)=a \}=h^{-1}(a),\\
		&Y:=\{x\in M^5: a<h(x)<b\},\\
		&Z:=\{x\in M^5: h(x)=b\}=h^{-1}(b).
	\end{aligned}
\end{equation}
Assume $Y\neq \emptyset$; otherwise the conclusion obviously holds. If $\Omega\neq \emptyset$, then $Y\subset \Omega$. Now, we introduce some notations: for $0<\epsilon<\frac{b-a}{2}$, write
\begin{equation*}
	\begin{aligned}
		&X_\epsilon:=\{x\in M^5: a<h(x)<a+\epsilon\},\\
		&Y_\epsilon:=\{x\in M^5: a+\epsilon\leq h(x)\leq b-\epsilon\},\\
		&Z_\epsilon:=\{x\in M^5: b-\epsilon<h(x)<b\},
	\end{aligned}
\end{equation*}
and then $Y=X_\epsilon\cup Y_\epsilon\cup Z_\epsilon$.

At the end of this subsection, we will present several lemmas used in the proof of Theorem \ref{main thm}. The following lemma gives the differential of the $(n-1)$-form $\Phi$.
\begin{lem}\label{dPhi} For a minimal hypersurface $M^n$ in the unit sphere $\mathbf{S}^{n+1}$, we have
	\begin{equation}\label{eqdPhi}
		\begin{aligned}
			d\Phi =& (-1)^n \,2\, \Bigg( \left(n-2\right)! \,f_3  + \sum_{\sigma} \frac{\left(\lambda_{i_{n-1} i_n} + \lambda_{i_ni_n}\right) \lambda_{i_{n-1} i_n}}{\lambda_{i_n} - \lambda_{i_{n-1}}} \\
	&\qquad\qquad +\sum_{\sigma} \lambda_{i_n} \sum_{k=1; k \neq i_{n-1}, i_n}^{n} \frac{\lambda_{k i_n} \lambda_{i_{n-1}i_n}}{\left(\lambda_k - \lambda_{i_n}\right)\left(\lambda_{i_{n-1}} - \lambda_{i_n}\right)}  \Bigg)\cdot \mathrm{vol}.
		\end{aligned}
	\end{equation}
	%where $vol=\theta_1\wedge\cdots\wedge\theta_n$ is the volume form of $M^n$.
\end{lem}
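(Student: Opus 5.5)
We compute $d\Phi$ term by term from the definition $(\ref{Phi})$. For each permutation $\sigma$, the summand is
\[
S(\sigma)(\lambda_{i_{n-1}}+\lambda_{i_n})\,\theta_{i_1}\wedge\cdots\wedge\theta_{i_{n-1}}\wedge\omega_{i_{n-1}i_n}.
\]
Its differential splits into three pieces:
\begin{itemize}
\item[(a)] $d(\lambda_{i_{n-1}}+\lambda_{i_n})$ wedged with the $(n-1)$-form;
\item[(b)] the terms coming from $d\theta_{i_l}=\sum_j\omega_{i_lj}\wedge\theta_j$;
\item[(c)] the term $\pm\theta_{i_1}\wedge\cdots\wedge\theta_{i_{n-1}}\wedge d\omega_{i_{n-1}i_n}$, where $d\omega_{i_{n-1}i_n}=\sum_k\omega_{i_{n-1}k}\wedge\omega_{ki_n}-R_{i_{n-1}i_n}$.
\end{itemize}
Because the form already contains $\theta_{i_1}\wedge\cdots\wedge\theta_{i_{n-1}}$, in each piece only the $\theta_{i_n}$-component of the remaining $1$-form survives.

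For (a), this gives the coefficients $(\lambda_{i_{n-1}i_n}+\lambda_{i_ni_n})\Gamma_{i_{n-1}i_ni_{n-1}}$.

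For (c), the curvature part contributes $-R_{i_{n-1}i_ni_{n-1}i_n}=-(1+\lambda_{i_{n-1}}\lambda_{i_n})$ by the Gauss equation $(\ref{gauss})$. The quadratic part contributes sums of products $\Gamma_{i_{n-1}k\,\cdot}\Gamma_{ki_n\,\cdot}$, restricted to the $(i_{n-1},i_n)$ slots.

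For (b), the pieces are $\omega_{i_lj}$ restricted to $\theta_{i_l}$ (for $j=i_n$), wedged with $\omega_{i_{n-1}i_n}$ evaluated on $e_{i_n}$. This produces products $\Gamma_{i_li_ni_l}\Gamma_{i_{n-1}i_ni_{n-1}}$, together with a term from $d\theta_{i_{n-1}}$ that pairs with $\omega_{i_{n-1}i_n}$ itself.

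Next we convert every connection coefficient into derivatives of the principal curvatures using $(\ref{Gamma})$ and Codazzi $(\ref{coda})$. For $i\neq j$,
\[
\Gamma_{iji}=\frac{h_{iji}}{\lambda_i-\lambda_j}=\frac{h_{iij}}{\lambda_i-\lambda_j}=\frac{\lambda_{ij}}{\lambda_i-\lambda_j}.
\]
Hence the coefficients $\Gamma_{i_{n-1}i_ni_{n-1}}$ appearing in (a) become $\lambda_{i_{n-1}i_n}/(\lambda_{i_n}-\lambda_{i_{n-1}})$ up to sign, which yields the first sum in $(\ref{eqdPhi})$. Likewise, the products $\Gamma_{ki_nk}\Gamma_{i_{n-1}i_ni_{n-1}}$ yield the double sum with the factor $\lambda_{i_n}$, once we combine the weights $\lambda_{i_{n-1}}+\lambda_{i_n}$ over the permutations that swap roles.

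The remaining terms involve $\Gamma_{ijk}$ with three distinct indices, that is, the quantities $h_{ijk}/(\lambda_i-\lambda_j)$. These must cancel after summing over all $\sigma$. To see this, we pair each $\sigma$ with the permutation exchanging $i_{n-1}$ and $i_n$ (opposite sign, and $\omega_{i_{n-1}i_n}$ changes sign too). We then use the antisymmetry $\Gamma_{ijk}=-\Gamma_{jik}$ and the cyclic identity $h_{ijk}$ symmetric. The goal is to show that the cubic-free cross terms telescope.

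Finally, the curvature term summed over $\sigma$ gives
\[
-\sum(\lambda_{i_{n-1}}+\lambda_{i_n})(1+\lambda_{i_{n-1}}\lambda_{i_n})
\]
times $(n-2)!$ times the number of ordered positions. Using minimality $\sum\lambda_i=0$, the linear part vanishes. The cubic part reduces to
\[
\sum_{i\neq j}(\lambda_i+\lambda_j)\lambda_i\lambda_j=2\sum_i\lambda_i^2\sum_j\lambda_j-2f_3=-2f_3,
\]
which produces the $(n-2)!\,f_3$ term with the overall factor $(-1)^n2$. The sign $(-1)^n$ comes from moving $\theta_{i_n}$ past the $n-1$ other forms to reach $\mathrm{vol}$.

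The main obstacle is the bookkeeping in the cancellation of the three-distinct-index terms $\Gamma_{ijk}\Gamma_{jki}$. I would handle it by grouping terms according to the unordered triple $\{i_{n-1},i_n,k\}$ and checking that the total coefficient vanishes using Codazzi symmetry.
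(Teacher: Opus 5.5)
There is a genuine gap. Your split into (a), (b), (c) matches the paper's $\phi$, $\psi_1$ and $\psi_2-\psi_3$. However, the bookkeeping inside (a) and (b) is wrong, and a whole family of terms in (c) is never accounted for.

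\textbf{The wedge bookkeeping in (a) and (b).} Each summand of $\Phi$ must be $S(\sigma)(\lambda_{i_{n-1}}+\lambda_{i_n})\,\theta_{i_1}\wedge\cdots\wedge\theta_{i_{n-2}}\wedge\omega_{i_{n-1}i_n}$. This is what the proof and Lemma \ref{lemu} actually use. With $\theta_{i_{n-1}}$ included, $\Phi$ would be an $n$-form and $d\Phi=0$. So your claim that ``only the $\theta_{i_n}$-component survives'' is false: in (a) and (b), two $1$-forms must jointly supply $\theta_{i_{n-1}}$ and $\theta_{i_n}$, which gives two terms each.
\begin{itemize}
\item In (a) you drop $-(\lambda_{i_{n-1}i_{n-1}}+\lambda_{i_ni_{n-1}})\Gamma_{i_{n-1}i_ni_n}$. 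After the swap $i_{n-1}\leftrightarrow i_n$ it equals the term you kept, and this is exactly where the factor $2$ on the first sum comes from.
\item In (b) you keep only $j=i_n$ in $d\theta_{i_l}=\sum_j\omega_{i_lj}\wedge\theta_j$. You lose the $j=i_{n-1}$ products $\Gamma_{i_li_{n-1}i_l}\Gamma_{i_{n-1}i_ni_n}$, and there is no $d\theta_{i_{n-1}}$ term at all.
\end{itemize}
With your count, the curvature piece gives $2(n-2)!\,f_3$ but the first sum appears only once, so the stated formula does not come out.

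\textbf{The missing cancellation.} More seriously, the step where the products $\Gamma_{ki_nk}\Gamma_{i_{n-1}i_ni_{n-1}}$ ``yield the double sum with the factor $\lambda_{i_n}$'' by combining weights over swapped permutations is not valid. Done correctly, (b) gives, up to the sign $(-1)^n$ and after the $h^2_{ki_{n-1}i_n}$ terms cancel,
\[
2\sum_\sigma(\lambda_{i_{n-1}}+\lambda_{i_n})\sum_{k\neq i_{n-1},i_n}\frac{\lambda_{ki_n}\lambda_{i_{n-1}i_n}}{(\lambda_k-\lambda_{i_n})(\lambda_{i_{n-1}}-\lambda_{i_n})}.
\]
The weight $\lambda_{i_{n-1}}+\lambda_{i_n}$ is invariant under $i_{n-1}\leftrightarrow i_n$, so no swap removes its $\lambda_{i_{n-1}}$-part.

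What removes it is a piece of (c) you never track. Besides the three-index products $\Gamma_{i_{n-1}ki_n}\Gamma_{ki_ni_{n-1}}$ (the $h^2$ terms, which do cancel triple by triple as you suggest), $\sum_k\omega_{i_{n-1}k}\wedge\omega_{ki_n}$ also contains the two-index products
\[
\Gamma_{i_{n-1}ki_{n-1}}\Gamma_{ki_ni_n}=\frac{\lambda_{i_{n-1}k}\lambda_{i_nk}}{(\lambda_{i_{n-1}}-\lambda_k)(\lambda_k-\lambda_{i_n})}.
\]
These appear neither in your first sum, nor in your double sum, nor among the three-distinct-index terms. The paper shows they cancel exactly the $\lambda_{i_{n-1}}$-weighted half of (b), via
\[
2\sum_\sigma\lambda_{i_{n-1}}\sum_{k}\frac{\lambda_{ki_n}\lambda_{i_{n-1}i_n}}{(\lambda_k-\lambda_{i_n})(\lambda_{i_{n-1}}-\lambda_{i_n})}=\sum_\sigma(\lambda_{i_{n-1}}+\lambda_{i_n})\sum_{k}\frac{\lambda_{i_{n-1}k}\lambda_{i_nk}}{(\lambda_k-\lambda_{i_n})(\lambda_k-\lambda_{i_{n-1}})}.
\]
This identity is proved by relabelling the triple so that the summation index $k$ on the right plays the role of the common derivative direction $i_n$ on the left, then symmetrizing in the remaining two indices. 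It does not follow from the swap $i_{n-1}\leftrightarrow i_n$. Only after this cancellation does the weight reduce to $\lambda_{i_n}$ and the lemma follow.
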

\begin{proof}
	The differential of $\Phi$ can be calculated by parts as follows:
	\begin{equation}\label{comdPhi}
		\begin{aligned}
			d\Phi=&\sum_{\sigma}S(\sigma)\,d\left(\lambda_{i_{n-1}}+\lambda_{i_n}\right)\wedge\theta_{i_1}\wedge\theta_{i_2}\wedge\cdots\wedge\theta_{i_{n-2}}\wedge\omega_{i_{n-1}i_{n}}\\
			&+\sum_{\sigma}S(\sigma)\left(\lambda_{i_{n-1}}+\lambda_{i_n}\right)d\left(\theta_{i_1}\wedge\theta_{i_2}\wedge\cdots\wedge\theta_{i_{n-2}}\wedge\omega_{i_{n-1}i_{n}}\right)\\
			:=&\phi+\psi.
		\end{aligned}
	\end{equation}
	
	Using $(\ref{dlam}-\ref{Gamma})$, we have
	\begin{equation*}
		\begin{aligned}
			\phi_1&:=\sum_{\sigma}S(\sigma)\,d\lambda_{i_{n-1}}\wedge\theta_{i_1}\wedge\theta_{i_2}\wedge\cdots\wedge\theta_{i_{n-2}}\wedge\left(\sum_{k=1}^n\Gamma_{i_{n-1}i_nk}\,\theta_k\right)\\
			&=\sum_{\sigma}S(\sigma)\left(\sum_{j=1}^{n}\lambda_{i_{n-1}j}\,\theta_j\right)\wedge\theta_{i_1}\wedge\theta_{i_2}\wedge\cdots\wedge\theta_{i_{n-2}}\wedge\left(\sum_{k=1}^n\,\Gamma_{i_{n-1}i_nk}\,\theta_k\right)\\
			&=\sum_{\sigma}S\left(\sigma\right)\Bigg(\lambda_{i_{n-1}i_{n-1}}\,\theta_{i_{n-1}}\wedge\theta_{i_1}\wedge\theta_{i_2}\wedge\cdots\wedge\theta_{i_{n-2}}\wedge\Gamma_{i_{n-1}i_ni_n}\,\theta_{i_n}\\
			&\qquad\qquad\qquad+\lambda_{i_{n-1}i_n}\,\theta_{i_n}\wedge\theta_{i_1}\wedge\theta_{i_2}\wedge\cdots\wedge\theta_{i_{n-2}}\wedge\Gamma_{i_{n-1}i_ni_{n-1}}\,\theta_{i_{n-1}}\Bigg)\\
			&=(-1)^n\,\sum_{\sigma}\Bigg(\frac{\lambda_{i_{n-1}i_{n-1}}\,\lambda_{i_ni_{n-1} }-\lambda_{i_{n-1} i_n}\,\lambda_{i_{n-1} i_n}}{\lambda_{i_{n-1}}-\lambda_{i_n}}\Bigg)\cdot \mathrm{vol}.
		\end{aligned}
	\end{equation*}
	Similarly,
	$$\begin{aligned}
		\phi_2:=&\sum_{\sigma}S(\sigma)\,d\lambda_{i_{n}}\wedge\theta_{i_1}\wedge\theta_{i_2}\wedge\cdots\wedge\theta_{i_{n-2}}\wedge\omega_{i_{n-1}i_{n}}\\
		=&(-1)^n\,\sum_{\sigma}\Bigg(\frac{\lambda_{i_{n}i_{n-1}}\lambda_{i_ni_{n-1}}-\lambda_{i_ni_n}\lambda_{i_{n-1}i_n}}{\lambda_{i_{n-1}}-\lambda_{i_n}}\Bigg)\cdot \mathrm{vol}.
		\end{aligned}$$
	Thus,
	\begin{equation}\label{phi}
		\phi=\phi_1+\phi_2=(-1)^n\,2\,\sum_{\sigma}\left(\frac{\left(\lambda_{i_{n-1} i_n}+\lambda_{i_ni_n}\right)\lambda_{i_{n-1} i_n}}{\lambda_{i_n}-\lambda_{i_{n-1}}}\right)\cdot \mathrm{vol}.
		\end{equation}
	%For the second term,we know that
	
	%To compute $\psi$, we decompose $\psi=\psi_1+\psi_2-\psi_3$.
	According to \cite{TWY}, we obtain
	\begin{equation*}
		\begin{aligned}
			\psi_1:=&\sum_{\sigma}S(\sigma)\left(\lambda_{i_{n-1}}+\lambda_{i_n}\right)d\left(\theta_{i_1}\wedge\theta_{i_2}\wedge\cdots\wedge\theta_{i_{n-2}}\right)\wedge\omega_{i_{n-1}i_{n}}\\
			=& (-1)^n\, \sum_{\sigma}\left(\lambda_{i_{n-1}}+\lambda_{i_n}\right) \sum_{k=1; k \neq i_{n-1}, i_n}^{n} \Bigg( -\frac{\lambda_{k i_{n-1}}\lambda_{i_n i_{n-1}}}{(\lambda_k - \lambda_{i_{n-1}})(\lambda_{i_{n-1}} - \lambda_{i_n})}\\
			&\qquad\qquad\quad+ \frac{h_{k i_{n-1} i_n}^2}{(\lambda_k - \lambda_{i_{n-1}})(\lambda_{i_{n-1}} - \lambda_{i_n})} + \frac{\lambda_{k i_n}\lambda_{i_{n-1} i_n}}{(\lambda_k - \lambda_{i_n})(\lambda_{i_{n-1}} - \lambda_{i_n})} \\
			&\qquad\qquad\qquad	\qquad\qquad\qquad\qquad \left. - \frac{h^2_{k i_{n-1} i_n}}{(\lambda_k - \lambda_{i_n})(\lambda_{i_{n-1}} - \lambda_{i_n})} \right)\cdot \mathrm{vol},
		\end{aligned}
	\end{equation*}
	and since
	$$\sum_{\sigma}\left(\lambda_{i_{n-1}}+\lambda_{i_n}\right)\sum_{k=1;k\neq i_{n-1},i_n}^{n}\frac{h_{ki_{n-1}i_n}^2}{(\lambda_k-\lambda_{i_{n-1}})(\lambda_{i_{n-1}}-\lambda_{i_n})}=0,$$
	we get
	\begin{equation}\label{psi1}
		\psi_1=(-1)^n\,2\,\sum_{\sigma}\left(\lambda_{i_{n-1}}+\lambda_{i_n}\right)\sum_{k=1;k\neq i_{n-1},i_n}^{n}\frac{\lambda_{ki_n}\lambda_{i_{n-1}i_n}}{(\lambda_k-\lambda_{i_n})(\lambda_{i_{n-1}}-\lambda_{i_n})}\cdot \mathrm{vol}.
	\end{equation}
	
	It also follows from \cite{TWY} that
	\begin{equation}\label{psi2}
		\begin{aligned}
			\psi_{2}&=(-1)^n\sum_{\sigma}S(\sigma)\left(\lambda_{i_{n-1}}+\lambda_{i_n}\right)\theta_{i_1}\wedge\theta_{i_2}\wedge\cdots\wedge\theta_{i_{n-2}}\\
			&	\qquad\qquad\qquad	\qquad\qquad\qquad	\qquad\wedge \left(\sum_{k=1;k\neq i_{n-1},i_n}^{n}\omega_{i_{n-1}k}\wedge\omega_{ki_n}\right)\\
			&=(-1)^n\sum_{\sigma}\left(\lambda_{i_{n-1}}+\lambda_{i_n}\right)\sum_{k=1;k\neq i_{n-1},i_n}^{n}\left(\frac{\lambda_{i_{n-1}k}\,\lambda_{i_nk}}{(\lambda_k-\lambda_{i_n})(\lambda_{i_{n-1}}-\lambda_{k})}\right.\\
			&	\qquad\qquad\qquad\qquad\qquad\qquad	\qquad\quad  \left.-\frac{h_{ki_{n-1}i_n}^2}{(\lambda_k-\lambda_{i_n})(\lambda_{i_{n-1}}-\lambda_{k})}\right)\cdot \mathrm{vol}\\
			&=(-1)^n\sum_{\sigma}\left(\lambda_{i_{n-1}}+\lambda_{i_n}\right)\sum_{k=1;k\neq i_{n-1},i_n}^{n}\frac{\lambda_{i_{n-1}k}\,\lambda_{i_nk}}{(\lambda_k-\lambda_{i_n})(\lambda_{i_{n-1}}-\lambda_{k})}\cdot \mathrm{vol},
		\end{aligned}
	\end{equation}
	where the last equation follows from the following equation:
	$$\sum_{\sigma}\left(\lambda_{i_{n-1}}+\lambda_{i_n}\right)\sum_{k=1;k\neq i_{n-1},i_n}^{n}\frac{h_{ki_{n-1}i_n}^2}{(\lambda_k-\lambda_{i_n})(\lambda_{i_{n-1}}-\lambda_k)}=0,$$
	whose proof can refer to \cite{CL}.
	
	Moreover, from $(\ref{gauss})$, when $H=0$ we have
	$$\sum_{i,j=1;i\neq j}^{n}\left(\lambda_{i}+\lambda_j\right)R_{ijij}=-2f_3.$$
	Therefore, we obtain
	\begin{equation}\label{psi3}
		\begin{aligned}
			\psi_{3}&=(-1)^n\,\sum_{\sigma}S(\sigma)\left(\lambda_{i_{n-1}}+\lambda_{i_n}\right)\theta_{i_1}\wedge\theta_{i_2}\wedge\cdots\wedge\theta_{i_{n-2}}\wedge R_{i_{n-1}i_n}\\
			&=(-1)^n\,\sum_{\sigma}\left(\lambda_{i_{n-1}}+\lambda_{i_n}\right)R_{i_{n-1}i_ni_{n-1}i_n}\,\theta_{1}\wedge\theta_{2}\wedge\cdots\wedge\theta_{n-1}\wedge \theta_{n}\\
			&=2\,(-1)^{n+1}\,(n-2)!\,f_3\cdot \mathrm{vol}.
		\end{aligned}
	\end{equation}
	%Where the last equal sign above is because when $H=0$, there have 
	
	By $(\ref{psi1}-\ref{psi3})$, we have
	\begin{equation}\label{psi}
		\begin{aligned}
			\psi&=\psi_1+\psi_2-\psi_3\\
			&=\sum_{\sigma}S(\sigma)\left(\lambda_{i_{n-1}}+\lambda_{i_n}\right)d\left(\theta_{i_1}\wedge\theta_{i_2}\wedge\cdots\wedge\theta_{i_{n-2}}\right)\wedge\omega_{i_{n-1}i_n}\\
			&\qquad\qquad\qquad\qquad+(-1)^n\,\theta_{i_1}\wedge\theta_{i_2}\wedge\cdots\wedge\theta_{i_{n-2}}\\
			&\qquad\qquad\qquad\qquad\qquad\wedge\left(\sum_{k=1;k\neq i_{n-1},i_n}^{n}\omega_{i_{n-1}k}\wedge\omega_{ki_n}-R_{i_{n-1}i_n}\right)\\
			&=(-1)^n\left(2\,\sum_{\sigma}\left(\lambda_{i_{n-1}}+\lambda_{i_n}\right)\sum_{k=1;k\neq i_{n-1},i_n}^n\frac{\lambda_{ki_n}\lambda_{i_{n-1}i_n}}{(\lambda_k-\lambda_{i_n})(\lambda_{i_{n-1}}-\lambda_{i_n})}\right.\\
			&	\qquad\qquad-\sum_{\sigma}\left(\lambda_{i_{n-1}}+\lambda_{i_n}\right)\sum_{k=1;k\neq i_{n-1},i_n}^{n}\frac{\lambda_{i_{n-1}k}\lambda_{i_nk}}{(\lambda_k-\lambda_{i_n})(\lambda_{k}-\lambda_{i_{n-1}})}\\
			&\qquad\qquad\qquad\qquad\qquad\qquad\qquad\qquad\qquad\quad+2\,(n-2)!\,f_3\Bigg)\cdot \mathrm{vol}\\
			&=(-1)^n\,2\,\sum_{\sigma}\lambda_{i_n}\sum_{k=1;k\neq i_{n-1},i_n}^n\frac{\lambda_{ki_n}\lambda_{i_{n-1}i_n}}{(\lambda_k-\lambda_{i_n})(\lambda_{i_{n-1}}-\lambda_{i_n})}\cdot \mathrm{vol}\\
			&\qquad\qquad\qquad\qquad\qquad\qquad\qquad\qquad\quad+(-1)^n\,2\,(n-2)!\,f_3\cdot \mathrm{vol}.\\
		\end{aligned}
	\end{equation}
	%where the last equality follows from the fact that 
	The last equality above holds because
	$$\begin{aligned}
		&2\sum_{\sigma}\lambda_{i_{n-1}}\sum_{k=1;k\neq i_{n-1},i_n}^n\frac{\lambda_{ki_n}\lambda_{i_{n-1}i_n}}{(\lambda_k-\lambda_{i_n})(\lambda_{i_{n-1}}-\lambda_{i_n})}\\
		&\qquad\qquad\qquad=\sum_{\sigma}\left(\lambda_{i_{n-1}}+\lambda_{i_n}\right)\sum_{k=1;k\neq i_{n-1},i_n}^n
		\frac{\lambda_{i_{n-1}k}\lambda_{i_nk}}{(\lambda_k-\lambda_{i_n})(\lambda_{i_{n-1}}-\lambda_{i_n})}.
		\end{aligned}$$
	%From the above discussion, $d\Phi =\Romannum{1}+\Romannum{2}$ is exactly the form appearing in the proposition.
	
	Finally, combining $(\ref{comdPhi}, \ref{phi},\ref{psi})$, we complete the proof of the lemma.
\end{proof}
The following lemma gives the $5$-form $dh\wedge\Phi$. Since the lemma can be obtained by direct calculation, we only provide a brief proof here. We refer the reader to \cite{HXZ} for detailed computations.
\begin{lem}\label{lemu}
For a hypersurface $M^5$ in the unit sphere $\mathbf{S}^6$, we have
	\begin{equation}\label{u}
		dh\wedge\Phi=\sum_{i=1}^{5}u_ih_i^2\cdot vol,%=\sum_{i=1}^{5}-\frac{12}{5}v_ih_i^2\cdot vol,
	\end{equation}
where% have 
	\begin{equation}\label{v}
	u_i=-\frac{12}{5}\sum_{k=1;k\neq i }^{5}\frac{\lambda_{k}+\lambda_{i}}{\left(\lambda_{k}-\lambda_{i}\right)^2\prod_{j=1;j\neq k, i}^5\left(\lambda_{j}-\lambda_{k}\right)}\ \mbox{for}\  i=1,2,\cdots,5.
	\end{equation}
\end{lem}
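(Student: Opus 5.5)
We work on $\Omega$ (where the $\lambda_i$ are smooth and distinct) in an admissible frame, and compute $dh\wedge\Phi$ directly from the definition $(\ref{Phi})$. By $(\ref{dh})$, $dh=\sum_j h_j\theta_j$. In each term $\theta_{i_1}\wedge\theta_{i_2}\wedge\theta_{i_3}\wedge\theta_{i_4}\wedge\omega_{i_4i_5}$ of $\Phi$, the only $\theta_j$ missing from $\theta_{i_1}\wedge\cdots\wedge\theta_{i_4}$ is $\theta_{i_5}$. Hence the wedge with $dh$ forces $j$ and the component of $\omega_{i_4i_5}$ to use $\theta_{i_5}$ once each. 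Since two copies of $\theta_{i_5}$ would vanish, this is impossible for a top form unless we re-read the indices. In $\theta_{i_1}\wedge\theta_{i_2}\wedge\theta_{i_3}\wedge\theta_{i_4}$ the missing directions are exactly $i_5$; wedging with $dh$ gives $h_{i_5}\theta_{i_5}$, and $\omega_{i_4i_5}$ must then contribute... Here I would first double-check the index convention. Reading $(\ref{Phi})$ as in the computation of $\phi_1$ in Lemma \ref{dPhi}, the coframe part is $\theta_{i_1}\wedge\cdots\wedge\theta_{i_{n-2}}$ and the connection factor is $\omega_{i_{n-1}i_n}$, so the missing directions are $i_4,i_5$. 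Thus
\[
dh\wedge\Phi=\sum_\sigma S(\sigma)(\lambda_{i_4}+\lambda_{i_5})\bigl(h_{i_4}\Gamma_{i_4i_5i_5}-h_{i_5}\Gamma_{i_4i_5i_4}\bigr)\,(\pm\,\mathrm{vol}),
\]
with a uniform sign fixed exactly as in $\phi_1$. I will use that reading.

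Next, we express the connection coefficients through $h$. By $(\ref{Gamma})$, $\Gamma_{i_4i_5i_5}=h_{i_4i_5i_5}/(\lambda_{i_4}-\lambda_{i_5})=\lambda_{i_5i_4}/(\lambda_{i_4}-\lambda_{i_5})$, using the symmetry of $h_{ijk}$ and $h_{iik}=\lambda_{ik}$. Similarly, $\Gamma_{i_4i_5i_4}=\lambda_{i_4i_5}/(\lambda_{i_4}-\lambda_{i_5})$. Now $(\ref{lam1})$ with $n=5$ gives
\[
\lambda_{ij}=\frac{h_j}{5}\cdot\frac{1}{\prod_{k\ne i}(\lambda_k-\lambda_i)} .
\]
Substituting, each term becomes a multiple of $h_{i_4}^2$ or $h_{i_5}^2$, with no cross terms. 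For instance, $h_{i_4}\lambda_{i_5i_4}=h_{i_4}^2/\bigl(5\prod_{k\ne i_5}(\lambda_k-\lambda_{i_5})\bigr)$.

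We then collect the coefficient of $h_i^2$. For fixed $i$, the contributing permutations are those with $i\in\{i_4,i_5\}$. The other element $k$ of that pair runs over the four remaining indices, and the first three positions are filled in $3!=6$ ways. The signs $S(\sigma)$ cancel against the reordering sign in the volume form, just as in the passage to $(\ref{phi})$. The two placements ($i=i_4$ or $i=i_5$) contribute equally, giving a factor $2$ and hence $2\cdot6/5=12/5$ overall. With $i=i_4$, $k=i_5$, the term is
\[
(\lambda_k+\lambda_i)\,\frac{h_i^2}{5(\lambda_i-\lambda_k)\prod_{j\ne k}(\lambda_j-\lambda_k)} .
\]
Pulling the factor $(\lambda_i-\lambda_k)$ out of $\prod_{j\ne k}(\lambda_j-\lambda_k)$ produces $-(\lambda_k-\lambda_i)^2\prod_{j\ne k,i}(\lambda_j-\lambda_k)$ in the denominator. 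This gives exactly the summand of $(\ref{v})$, including the minus sign.

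The main obstacle is the sign bookkeeping. We must check that $S(\sigma)$ times the sign of reordering $\theta_{i_1}\wedge\theta_{i_2}\wedge\theta_{i_3}\wedge\theta_{i_5}\wedge\theta_{i_4}$ (or $dh$ placed first) into $\mathrm{vol}$ is constant, and that the two placements of $i$ add rather than cancel. I would verify this on one explicit permutation, such as the identity, and compare with the overall sign $(-1)^n$ used in Lemma \ref{dPhi}. Any global sign discrepancy would be absorbed by recomputing this single case.
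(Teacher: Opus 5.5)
Your route is the paper's: expand $dh\wedge\Phi$, note that only $\Gamma_{i_4i_5i_5}$ and $\Gamma_{i_4i_5i_4}$ survive, rewrite them via Codazzi, $(\ref{Gamma})$ and $(\ref{lam1})$, and collect the $h_i^2$ terms. The paper only groups the permutations first, writing $\Phi=12\sum_{i<j}(\lambda_i+\lambda_j)\theta_k\wedge\theta_l\wedge\theta_m\wedge\omega_{ij}$; the $12$ is your factor $2\cdot 3!$. Your reading of $(\ref{Phi})$ with coframe part $\theta_{i_1}\wedge\theta_{i_2}\wedge\theta_{i_3}$ is also correct.

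The sign bookkeeping, however, is wrong as written. Your claim that pulling $(\lambda_i-\lambda_k)$ out of $\prod_{j\neq k}(\lambda_j-\lambda_k)$ produces $-(\lambda_k-\lambda_i)^2\prod_{j\neq k,i}(\lambda_j-\lambda_k)$ is false:
$$(\lambda_i-\lambda_k)\prod_{j\neq k}(\lambda_j-\lambda_k)=(\lambda_i-\lambda_k)^2\prod_{j\neq i,k}(\lambda_j-\lambda_k)=(\lambda_k-\lambda_i)^2\prod_{j\neq i,k}(\lambda_j-\lambda_k).$$
The minus sign in $(\ref{v})$ comes instead from the factor you left as $\pm$. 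Moving $\theta_{i_4}$ from the front past $\theta_{i_1}\wedge\theta_{i_2}\wedge\theta_{i_3}$ costs $-1$, while moving $\theta_{i_5}$ to the back costs $+1$. Hence
$$dh\wedge\theta_{i_1}\wedge\theta_{i_2}\wedge\theta_{i_3}\wedge\omega_{i_4i_5}=S(\sigma)\bigl(-h_{i_4}\Gamma_{i_4i_5i_5}+h_{i_5}\Gamma_{i_4i_5i_4}\bigr)\,\mathrm{vol}.$$
This agrees with the factor $(-1)^n=-1$ in $\phi_1$ and with the paper's $12\sum_{i<j}(\lambda_i+\lambda_j)(-h_i\Gamma_{ijj}+h_j\Gamma_{iji})\,\mathrm{vol}$. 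Combining this correct wedge sign with your spurious minus would give $u_i$ with the opposite sign. Your stated agreement with $(\ref{v})$ only arises because you implicitly took the wedge sign to be $+$, which contradicts the $\phi_1$ convention you said you would follow.

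This is not cosmetic. The sign of $u_i$ is exactly what Lemma \ref{bound} and the estimate $(\ref{int})$ rely on, so it cannot be left to be ``absorbed''. The repair is short. Fix $i\neq k$. The six permutations with $(i_4,i_5)=(i,k)$ each contribute $-(\lambda_i+\lambda_k)h_i\Gamma_{ikk}$ to the $h_i^2$-part. The six with $(i_4,i_5)=(k,i)$ each contribute $+(\lambda_i+\lambda_k)h_i\Gamma_{kik}=-(\lambda_i+\lambda_k)h_i\Gamma_{ikk}$, so the two placements do add, as you said. Since $h_{ikk}=h_{kki}=\lambda_{ki}$ and $(-1)^{n+1}=1$ in $(\ref{lam1})$,
$$h_i\Gamma_{ikk}=\frac{h_i\lambda_{ki}}{\lambda_i-\lambda_k}=\frac{h_i^2}{5(\lambda_k-\lambda_i)^2\prod_{j\neq i,k}(\lambda_j-\lambda_k)}.$$
Summing over $k\neq i$ gives $(\ref{v})$ with the correct prefactor $-\frac{12}{5}$.
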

%for $ i=1,2,\cdots,5$.

\begin{proof}
	We express $\Phi$ as follows:
	$$\Phi=12\sum_{i,j=1;i<j}^{5}(\lambda_{i}+\lambda_{j})\theta_k\wedge\theta_l\wedge\theta_m\wedge\omega_{ij},$$
where $\{k,l,m\}=\{1,2,\cdots,5\}\setminus\{i,j\}$, and we choose the ordering of $k,l,m$ so that the permutation $\sigma'(1,2,3,4,5)=(k,l,m,i,j)$ satisfies $S(\sigma')=1$.% for.% where 
	%For the case $i_4=1,i_5=2$, we can easily get that the term containing $\omega_{12}$ of $\Phi$ is $$12\left(\lambda_{1}+\lambda_2\right)\theta_3\wedge\theta_4\wedge\theta_5\wedge\omega_{12}.$$
	
	According to $(\ref{conform}, \ref{dh})$, through direct calculation, we obtain
	$$dh\wedge\Phi=12\sum_{i,j=1;i<j}^{5}(\lambda_{i}+\lambda_{j})\left(-h_i\Gamma_{ijj}+h_j\Gamma_{iji}\right)\cdot \mathrm{vol}.$$
	% that the term containing $\lambda_{1}+\lambda_{2}$ in the coefficient of $dh\wedge\Phi$ is 
	%$$-12\left(\lambda_{1}+\lambda_{2}\right)\left(h_1\Gamma_{122}+h_2\Gamma_{121}\right)\cdot vol.$$
	%Hence
%	$$dh\wedge\Phi=-12\sum_{i,j=1;i<j}^{5}\left(\lambda_{i}+\lambda_{j}\right)\left(h_i\Gamma_{ijj}+h_j\Gamma_{iji}\right).$$
	Replacing $\Gamma_{ijj}$ and $\Gamma_{iji}$ by $(\ref{coda}, \ref{Gamma}, \ref{lam1})$, we obtain the conclusion.
	\end{proof}
Using Lemma \ref{lemu}, we now give a lemma on the boundedness of $u_i$.
\begin{lem}\label{bound}
	There exists a constant $C>0$ depending only on $c_1, c_2, c_3, c_4$ such that
	\begin{align*}
		u_i \leq C\ \mbox{on}\ X_\epsilon\ \mbox{and}\ u_i \geq -C\ \mbox{on }\ Z_\epsilon.
	\end{align*}
\end{lem}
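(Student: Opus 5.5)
The plan is to rewrite each $u_i$ as a local quantity at $\lambda_i$ by a residue computation, and then to read off its sign from the way roots of $F$ collide as $h\to a$ or $h\to b$. Throughout, $F(x)=\prod_{k}(x-\lambda_k)=F_0(x)-\frac{h}{5}-C_h$ on $Y\subset\Omega$. For $k\neq i$ we have
$$\prod_{j\neq k,i}(\lambda_j-\lambda_k)=\frac{F'(\lambda_k)}{\lambda_i-\lambda_k}.$$
Hence each summand of $(\ref{v})$ is $-(\lambda_k+\lambda_i)/\bigl((\lambda_k-\lambda_i)F'(\lambda_k)\bigr)$, which is the residue at $x=\lambda_k$ of the rational function
$$g_i(x)=\frac{-(x+\lambda_i)}{(x-\lambda_i)F(x)}.$$
Since $\deg F=5$, $g_i$ decays like $x^{-5}$, so its residue at infinity vanishes. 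The residue theorem then gives
$$u_i=\frac{12}{5}\operatorname{Res}_{x=\lambda_i}g_i=-\frac{6}{5}\,\frac{d^2}{dx^2}\Bigl[\frac{x+\lambda_i}{G_i(x)}\Bigr]_{x=\lambda_i},\qquad G_i(x)=\prod_{j\neq i}(x-\lambda_j).$$
This formula involves only $\lambda_i$ and the values $G_i(\lambda_i)=F'(\lambda_i)$, $G_i'(\lambda_i)$, $G_i''(\lambda_i)$.

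Next I use boundedness. Because $S$ is constant, all $\lambda_k$ are bounded, and the coefficients of $G_i$ are polynomials in $\lambda_i$ and $c_1,\dots,c_4$, so $G_i'$ and $G_i''$ are uniformly bounded. Expanding the second derivative, $u_i$ is a polynomial in these bounded data divided by $G_i(\lambda_i)^3$. So $u_i$ is bounded on every set where $|F'(\lambda_i)|$ stays away from $0$. By continuity of the ordered roots in $h$, this covers every index $i$ whose root does not take part in a collision as $h\to a$ (resp. $h\to b$).

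Now I identify the colliding roots. On $X_\epsilon$, with $\epsilon$ small, $h$ is close to $a$, and $a'=\max\{F_0(\tau_2),F_0(\tau_4)\}$ is a local minimum value of $F_0$. The only roots that come together are the pair straddling the relevant local minimum $\tau_2$ or $\tau_4$, namely $(\lambda_2,\lambda_3)$ or $(\lambda_4,\lambda_5)$, or both pairs if the two values coincide. For such a pair $\lambda_m<\lambda_{m+1}$ straddling a local minimum of $F_0$, we have $F'(\lambda_m)<0<F'(\lambda_{m+1})$ with $|F'|\sim F_0''(\tau)\,\delta$, where $\delta=\lambda_{m+1}-\lambda_m\to0$. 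The dominant part of $u_i$ is
$$-\frac{6}{5}\cdot\frac{4\lambda_i\,G_i'(\lambda_i)^2}{G_i(\lambda_i)^3},$$
together with lower-order terms of size $O(1/G_i^2)$. Since $G_i'(\lambda_i)\to F_0''(\tau)/2>0$, the sign of the singular part is $-\operatorname{sign}(\lambda_i)\,\operatorname{sign}(F'(\lambda_i))$. Proving $u_i\le C$ on $X_\epsilon$ therefore reduces to showing that this combination is nonpositive for both members of the colliding pair. The same holds on $Z_\epsilon$, where the pairs $(\lambda_1,\lambda_2)$ or $(\lambda_3,\lambda_4)$ straddle local maxima $\tau_1,\tau_3$, the signs of $F'$ and $F_0''$ flip, and one needs the singular part to be $\ge0$. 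In each case the constant $C$ depends only on $c_1,\dots,c_4$, because $F_0$, $\tau_i$, $a$ and $b$ do.

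The main obstacle is exactly this sign bookkeeping. The naive leading term for the two members of a pair carries opposite signs of $F'(\lambda_i)$, so boundedness from one side cannot come from the $1/\delta^3$ term alone. First I would expand $\lambda_m$ and $\lambda_{m+1}$ around $\tau$ as $\tau\mp\delta/2+O(\delta^2)$ and compute the singular part of the residue formula exactly to order $\delta^{-3}$ and $\delta^{-2}$. The aim is to show that the genuinely unbounded contributions have the required sign once one accounts for the double collision structure of $\frac{d^2}{dx^2}$, for instance a factor $(x+\lambda_i)$ evaluated near $x=\tau$ combining with $F'''$. If a definite sign still fails, the fallback is to note that $h_i^2$ also vanishes as $\delta\to0$. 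Indeed, $(\ref{lam1})$ relates $h_i$ to $\lambda_{ii}F'(\lambda_i)$, which suggests the pair contributions should be controlled through $u_i h_i^2$. Even so, the stated one-sided bound on $u_i$ itself is what I would try to establish first via the expansion above.
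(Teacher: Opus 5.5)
Your residue computation uses the wrong pole order. On $\Omega$ the zero of $F$ at $\lambda_i$ is simple, so $g_i(x)=-(x+\lambda_i)/\bigl((x-\lambda_i)^2G_i(x)\bigr)$ has a pole of order two, not three. The residue is therefore a \emph{first} derivative:
\begin{equation*}
u_i=-\frac{12}{5}\,\frac{d}{dx}\Bigl[\frac{x+\lambda_i}{G_i(x)}\Bigr]_{x=\lambda_i}
=\frac{12}{5}\cdot\frac{2\lambda_iG_i'(\lambda_i)-G_i(\lambda_i)}{G_i(\lambda_i)^2}
=\frac{12}{5}\cdot\frac{\lambda_iF''(\lambda_i)-F'(\lambda_i)}{F'(\lambda_i)^2},
\end{equation*}
since $G_i(\lambda_i)=F'(\lambda_i)$ and $2G_i'(\lambda_i)=F''(\lambda_i)$. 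As a check, take $\lambda=(-2,-1,0,1,2)$ and $i=1$: formula $(\ref{v})$ and this one both give $u_1=11/15$, while your second-derivative expression gives $17/18$. Near a collision at $\tau$, the singular part is $\frac{12}{5}\lambda_iF''(\lambda_i)/F'(\lambda_i)^2$. It is of order $\delta^{-2}$ and has the same sign for both members of the pair, namely the sign of $\tau F''(\tau)$. So the ``opposite signs'' obstacle is an artifact of the extra derivative. The $u_ih_i^2$ fallback is unnecessary, and it would in any case need $h_i^2/|dh|^2\to0$, which nothing provides.

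The genuine gap is that, even with the correct formula, the bounds hold only if the double root has the right sign. $u_i\le C$ on $X_\epsilon$ needs $\tau\le0$ at $h=a$ (where $F''(\tau)>0$), and $u_i\ge -C$ on $Z_\epsilon$ needs $\tau\le0$ at $h=b$ (where $F''(\tau)<0$). Nothing in $c_1,\dots,c_4$ or in the collision picture gives this. Your proposal never uses hypothesis $(\ref{Ass})$ of Theorem \ref{main thm}, and the lemma depends on it.

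Concretely, suppose $a'=F_0(\tau_4)$. Then at $h=a$ one has $\lambda_4=\lambda_5=\tau_4$, the largest root, which is positive because $H=0$ and $S\neq0$. Hence $u_4,u_5\to+\infty$ on $X_\epsilon$ and the upper bound is false. The paper excludes this case via $(\ref{Ass})$: where $\lambda_1<\lambda_2<\lambda_3<\lambda_4=\lambda_5$ one has $A(3)=-2\lambda_5\prod_{i<j;\,i,j\in\{1,2,5\}}(\lambda_i-\lambda_j)^2\mathcal{P}_{A(3)}^2<0$. At points with multiplicities $(2,2,1)$ all $A(r)=0$, which rules out two pairs colliding at once.

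For the remaining collisions, $\lambda_2=\lambda_3$ at $h=a$ and $\lambda_3=\lambda_4$ at $h=b$, the sign of the double root is again not automatic. The paper obtains it from $A(5)>0$ at the limit configuration. For example, $A(5)=-2\alpha_3\prod_{i<j;\,i,j\in\{1,3,4\}}(\alpha_i-\alpha_j)^2\mathcal{P}_{A(5)}^2>0$ forces $\alpha_2=\alpha_3<0$. Only $\beta_1=\beta_2<0$ is automatic, being the smallest root. Without this input from $(\ref{Ass})$, no refinement of your expansion can yield the stated one-sided bounds.
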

\begin{proof}
	Define $$h^{-1}(b)=(\beta_1,\beta_2,\beta_3,\beta_4,\beta_5) \ \mbox{and}\
	h^{-1}(a)=(\alpha_1,\alpha_2,\alpha_3,\alpha_4,\alpha_5).$$ 
	Then Figure \ref{fig1} shows that the multiplicities of $\alpha_{i}$ or $\beta_i$ $(i=1,2,\cdots,5)$ are at most $2$ as $h\rightarrow a$ or $h\rightarrow b$. However, 
	%a direct computation shows that
	%at e if there exist distinct $i<j<k<l\in \{1,2,\cdots,5\}$ such that $\lambda_i = \lambda_j\neq\lambda_k = \lambda_l$ and both of them are not equal to $-2\lambda_{i}-2\lambda_{k}$,
at a point where there are three distinct principal curvatures with multiplicities $(2, 2, 1)$, we have $ A(r) = 0 $ for any $r = 1, 2,\cdots, 5$,
	 %then $A(r) = 0$ for any $r\in \{1,2,\cdots,5\}$, 
	 which contradicts $(\ref{Ass})$.  Moreover, if $\lambda_{1}<\lambda_{2}<\lambda_{3}<\lambda_{4}=\lambda_{5}$, then we have 
	$$A(3)=-2\lambda_{5}\prod_{i,j=1,2,5;i<j}{\left(\lambda_{i}-\lambda_{j}\right)}^2\mathcal{P}_{A(3)}^2<0,$$	
	which also contradicts assumption $(\ref{Ass})$. Here $\mathcal{P}_{A(3)}$ is a homogeneous polynomial of degree two in $\lambda_{i}$ $(i=1,2,5)$. 
	Consequently, such points cannot exist on the hypersurface $M^5$. Hence, only the following cases are possible.
	%{\left(2{\alpha_{1}}^2+5\alpha_{1}\alpha_{2}+6\alpha_{1}\alpha_{5}+2{\alpha_{2}}^2+6\alpha_{2}\alpha_{5}+4{\alpha_{5}}^2\right)}
	
	By Figure \ref{fig1}, when $h\rightarrow a$, %the following possible cases arise.
	we have $\alpha_{1}<\alpha_2=\alpha_{3}<\alpha_{4}<\alpha_{5}$. 
	%In addition, 
	From $(\ref{v})$, we can write $u_1$ as
	\begin{equation*}
		\begin{aligned}
			u_1=&-\frac{12}{5}\Bigg(	\frac{\mathcal{P}_{v_1}}{{\left(\lambda_{1}-\lambda_{2}\right)}^2{\left(\lambda_{1}-\lambda_{3}\right)}^2\left(\lambda_{2}-\lambda_{4}\right)\left(\lambda_{2}-\lambda_{5}\right)\left(\lambda_{3}-\lambda_{4}\right)\left(\lambda_{3}-\lambda_{5}\right)}\\
			&+\frac{\lambda_{1}+\lambda_{4}}{(\lambda_{1}-\lambda_{4})^2\prod_{i=2,3,5}(\lambda_{i}-\lambda_{4})}+\frac{\lambda_{1}+\lambda_{5}}{(\lambda_{1}-\lambda_{5})^2\prod_{i=2,3,4}(\lambda_{i}-\lambda_{5})}\Bigg)
			%+\frac{V_1}{\prod\limits_{i}(\alpha_2-\alpha_i)}		
		\end{aligned}
	\end{equation*}
	where $\mathcal{P}_{v_1}$ is a polynomial of $\lambda_{i}$ $(i=1,2,\cdots,5)$, thus $u_1$ is bounded. Similarly, $u_4$ and $u_5$ are bounded. For $u_2,u_3$, from $(\ref{v})$ we know when $h\rightarrow a$, they tend to $-\infty$  if $\alpha_{2}=\alpha_{3}<0$, which can be obtained by the assumption $A(5)>0$. More precisely, it follows from 
	$$A(5)=-2\alpha_{3}\prod_{i,j=1,3,4;i<j}{\left(\alpha_{i}-\alpha_{j}\right)}^2\mathcal{P}_{A(5)}^2>0$$
	that $\alpha_{2}=\alpha_{3}<0$. Here $\mathcal{P}_{A(5)}$ is a homogeneous polynomial of degree two in $\alpha_{i}$ $(i=1,3,4)$. Thus, $u_2$ and $u_3$ have upper bounds.
	
	By Figure \ref{fig1}, 
	When $h\rightarrow b$, we have
	$$\beta_{1}=\beta_2<\beta_{3}<\beta_{4}<\beta_{5}\ \mbox{or}\  \beta_{1}<\beta_2<\beta_{3}=\beta_{4}<\beta_{5}.$$ Then, similar
	to the discussion for $h\rightarrow a$, we have that %$v_3,v_4,v_5$ $(\mbox{or}\ v_1,v_2,v_5)$, and
	  $u_3,u_4,u_5$ $(\mbox{or}\ u_1,u_2,u_5)$ are bounded. Moreover, from $A(5)>0$, we obtain $\beta_{1}=\beta_2<0$ $(\mbox{or}\ \beta_{3}=\beta_4<0)$, and therefore, $u_1,u_2\rightarrow +\infty$ $(\mbox{or}\ u_3,u_4\rightarrow +\infty)$.
	  
	  The explicit expressions of the polynomials $\mathcal{P}_{A(3)}$, $\mathcal{P}_{v_1}$, and $\mathcal{P}_{A(5)}$ mentioned above are provided in Appendix \ref{A}.
\end{proof}
	%he proof of the theorem also requires the following lemma. 
	The following Lemma \ref{lemdB} was first proved by de Almeida and Brito \cite{dB} for the case $n=3$, and then Tang and Yan \cite{TY} pointed out that it also holds for arbitrary dimension $n$.
	\begin{lem}\label{lemdB}
	Suppose $u:M^5\rightarrow\mathbb{R}$ is smooth and $m=\min_{M^5}u$. If $D_\epsilon=u^{-1}([m,m+\epsilon])$, then
	$$\lim_{\epsilon\to 0} \int_{D_\epsilon}|\Delta u|\cdot\mathrm{vol}=0.$$
	In particular,
	$$\lim_{\epsilon\to 0}\int_{M^5-Y_\epsilon}|\Delta h|\cdot\mathrm{vol}=0,\ \mbox{if}\ X\cup Z\neq \emptyset.$$
\end{lem}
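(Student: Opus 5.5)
The plan is to reduce the limit to an integral over the minimum set $u^{-1}(m)$, and then to show that $\Delta u$ vanishes almost everywhere on that set.

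\emph{Step 1: monotone limit.} The sets $D_\epsilon=u^{-1}([m,m+\epsilon])$ decrease as $\epsilon\downarrow 0$, and $\bigcap_{\epsilon>0}D_\epsilon=u^{-1}(m)$. Since $M^5$ is closed and $u$ is smooth, $|\Delta u|$ is bounded, hence integrable. The dominated (or monotone) convergence theorem applied to $\chi_{D_\epsilon}|\Delta u|$ then gives
$$\lim_{\epsilon\to0}\int_{D_\epsilon}|\Delta u|\cdot\mathrm{vol}=\int_{u^{-1}(m)}|\Delta u|\cdot\mathrm{vol}.$$
So it suffices to show $\Delta u=0$ almost everywhere on $K:=u^{-1}(m)$.

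\emph{Step 2: second derivatives vanish at density points.} Every point of $K$ is an interior minimum, so $\nabla u=0$ on $K$. I will work in local coordinates and consider a Lebesgue density point $p\in K$; almost every point of $K$ is one by the Lebesgue density theorem. Each component $\partial_j u$ is $C^1$ and vanishes on $K$.

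Suppose the differential $d(\partial_j u)(p)$ were nonzero. Then the zero set of $\partial_j u$ near $p$ would be contained in a $C^1$ hypersurface. In particular $K$ would be disjoint from a cone of fixed positive aperture around $p$, which contradicts $p$ being a density point.

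Hence all second derivatives $\partial_i\partial_j u(p)$ vanish. Since the Christoffel terms multiply first derivatives, which are zero at $p$, the Riemannian Hessian vanishes at $p$, and in particular $\Delta u(p)=0$. Therefore $\int_K|\Delta u|=0$, which proves the first claim. This density-point argument is the only delicate step. Everything else is measure-theoretic bookkeeping, and no structure of $M^5$ beyond compactness and smoothness is used.

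\emph{Step 3: application to $h$.} We have $\operatorname{Im}h=[a,b]$ and
$$M^5-Y_\epsilon=h^{-1}([a,a+\epsilon))\cup h^{-1}((b-\epsilon,b]).$$
The first piece is contained in $D_\epsilon$ for $u=h$, whose minimum is $a$. The second piece is contained in $D_\epsilon$ for $u=-h$, whose minimum is $-b$; note that $|\Delta(-h)|=|\Delta h|$. Applying the first part to $u=h$ and to $u=-h$ and adding the two limits yields
$$\lim_{\epsilon\to0}\int_{M^5-Y_\epsilon}|\Delta h|\cdot\mathrm{vol}=0.$$
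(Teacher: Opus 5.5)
Your proof is correct. The paper gives no argument of its own here and defers to de Almeida--Brito and Tang--Yan; your route is essentially the standard de Almeida--Brito proof: dominated convergence to reduce to $\int_{u^{-1}(m)}|\Delta u|$, the implicit function theorem applied to each $\partial_j u$ to get that the Hessian vanishes almost everywhere on the critical set, and then the first part applied to $u=h$ and $u=-h$. Your density-point phrasing simply replaces a countable cover of the bad set by hypersurfaces, and the one point worth stating explicitly is that $h=\sum_i\lambda_i^5$, the trace of the fifth power of the shape operator, is smooth on all of $M^5$ (not just on $\Omega$), which is what allows you to apply the first part to $\pm h$.
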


\section{Simplification of $d\Phi$}\label{simp}
In this section, we give a more explicit expression for $d\Phi$ when $n=5$, which will be used in the proof of Theorem \ref{main thm}.
\begin{lem}\label{corelem}
 For a minimal hypersurface $M^5$ in the unit sphere $\mathbf{S}^{6}$, we have
		\begin{equation}\label{key form}
		\begin{aligned}
			d\Phi =-12\left(3\,\sigma_3 +\frac{1}{25\prod_{i,j=1;i<j}^5(\lambda_{i}-\lambda_{j})^2}\sum_{r=1}^5A(r)h_r^2\right)\cdot \mathrm{vol}.
		\end{aligned}
	\end{equation}
\end{lem}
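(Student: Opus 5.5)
Starting from Lemma~\ref{dPhi} with $n=5$, where $(-1)^n\,2\,(n-2)!\,f_3=-12f_3$, the plan is to rewrite everything in terms of the single function $h$ and then collapse the rational expressions. Under minimality we have $\sigma_1=0$, and Newton's identity gives $f_3=3\sigma_3$. This produces the constant term $-36\,\sigma_3=-12\cdot 3\sigma_3$ in (\ref{key form}).

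For the derivative terms we work on $\Omega$, where (\ref{lam1}) applies. With $n=5$, $(-1)^{n+1}=1$, so
\[
\lambda_{ij}=\frac{h_j}{5\,F'_i},\qquad F'_i:=\prod_{k\neq i}(\lambda_k-\lambda_i).
\]
Substituting this into the two sums of Lemma~\ref{dPhi} makes every summand a multiple of some $h_jh_l$. In the first sum the products are $\lambda_{i_{n-1}i_n}\lambda_{i_ni_n}$ and $\lambda_{i_{n-1}i_n}^2$. In the second sum the product is $\lambda_{ki_n}\lambda_{i_{n-1}i_n}$. In all three cases the second index is $i_n$, so each summand is proportional to $h_{i_n}^2$.

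Hence $d\Phi+36\sigma_3\,\mathrm{vol}$ is a diagonal quadratic form $\sum_{r} B_r h_r^2\cdot\mathrm{vol}$, with no cross terms. This is the structural point that makes the lemma possible. Grouping by $r=i_n$ and summing over the $3!$ orderings of the remaining three indices, we get
\[
B_r=-\frac{2\cdot 6}{25}\sum_{m\neq r}\left(\frac{1}{F'_mF'_r(\lambda_r-\lambda_m)}\Bigl(\frac1{F'_r}+\frac1{F'_m}\Bigr)+\lambda_r\sum_{k\neq m,r}\frac{1}{F'_kF'_m(\lambda_k-\lambda_r)(\lambda_m-\lambda_r)}\right).
\]
The sign and index conventions still need care here.

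It then remains to prove, for each $r$, the rational-function identity
\[
B_r=-\frac{12}{25}\,\frac{A(r)}{\prod_{i<j}(\lambda_i-\lambda_j)^2}.
\]
By the symmetry of all definitions under permutations of the $\lambda$'s, it suffices to treat $r=5$ and obtain the others by transposition, matching the statement in Lemma~\ref{pvec}. For $r=5$ we multiply $B_5$ by the full squared Vandermonde $V^2=\prod_{i<j}(\lambda_i-\lambda_j)^2$. Each term then becomes a polynomial, because every denominator divides $V^2$. We must show this polynomial equals $\sum_{\{i,j,k\}\subset I_5}s^{ijk}Q_{ijk}$, with $Q_{ijk}$ the product of the two factors in (\ref{Ass}).

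The natural organization is to note that $V^2/(F'_5{}^2)$ equals $\prod_{i<j\le4}(\lambda_i-\lambda_j)^2$. Expanding that product alongside the terms indexed by $m\in I_5$ produces the sum over triples $\{i,j,k\}=I_5\setminus\{m\}$ with weight $s^{ijk}$. We then use $\sigma_1=0$, that is $\lambda_5=-s_1^{ijk}-\lambda_m$, to rewrite the cofactors in terms of $s_1,s_2,s_3$ of the triple and the global $\sigma_2,\sigma_3$. The quantities $p_m,q_m$ of Lemma~\ref{pvec} are presumably exactly these cofactors, which gives a useful cross-check through the alternative form $A(5)=-\sum\lambda_m v_m^2q_m^2$.

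The main obstacle is this last polynomial identity. It is a degree-bookkeeping computation in five variables, and the minimality constraint must be used in just the right way. I would first verify it symbolically (for instance with computer algebra, as the paper does in Appendix~\ref{A}). For a human-readable proof, I would compare both sides as polynomials symmetric in $\lambda_1,\dots,\lambda_4$ with $\lambda_5=-\sigma_1(\lambda_1,\dots,\lambda_4)$, checking that they agree on enough specializations. Degree considerations bound the number of checks needed, and natural specializations are $\lambda_i=\lambda_j$, where both sides should vanish to the same order.
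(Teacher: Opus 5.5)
Your reduction is the same as the paper's: Lemma~\ref{dPhi} at $n=5$, $f_3=3\sigma_3$ from Newton's identity, the observation that (\ref{lam1}) makes every summand a multiple of $h_{i_n}^2$, and reduction to $r=5$ by symmetry. The gap is that the whole content of the lemma is left unproved. That content is the identity $V^2L(5)=A(5)$, with $V^2=\prod_{i<j}(\lambda_i-\lambda_j)^2$, and you defer it to ``verify symbolically'' or ``check enough specializations''. The groundwork you lay for that check is also faulty.

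(i) The first summand of your $B_r$ should be $\frac{1}{F'_m(\lambda_r-\lambda_m)}\bigl(\frac{1}{F'_m}+\frac{1}{F'_r}\bigr)$. Yours has an extra factor $1/F'_r$, giving degree $-13$ versus $-9$ for the other summand and for $A(r)/V^2$. A computer check of your formula as written would simply fail.

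(ii) It is not true that every denominator divides $V^2$. The terms $\frac{1}{(\lambda_r-\lambda_m)(F'_m)^2}$ and $\frac{1}{(\lambda_r-\lambda_m)F'_mF'_r}$ each have $(\lambda_r-\lambda_m)^3$ in the denominator. Only their sum is harmless: for $r=5$ one has $\frac{1}{F'_m}+\frac{1}{F'_5}=-\frac{\xi_m}{\prod_{i\in I_5\setminus\{m\}}(\lambda_i-\lambda_m)(\lambda_i-\lambda_5)}$ with $\xi_m$ as in (\ref{xik}). This is what produces the term $a_m$ of (\ref{ak}).

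(iii) The specializations $\lambda_i=\lambda_j$ are not common zeros. At $\lambda_3=\lambda_4$ only the triples $\{1,2,3\}$ and $\{1,2,4\}$ survive in (\ref{Ass}). Then $A(5)=-2\lambda_4s^{124}\mathcal{P}_1^2$ (Case (1) in the proof of Corollary~\ref{cor}), which is nonzero in general. Moreover, the left side cannot be evaluated there until its denominators have been cleared, which is precisely the computation in question.

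The missing idea concerns the off-diagonal terms. Clearing denominators gives $V^2L(5)=\mathcal{A}=-2\lambda_5\sum_{k<l}a^{(kl)}+\sum_k a_k$ as in (\ref{mA}). Your organization via $V^2/(F'_5)^2$ and the weights $s^{ijk}$ accounts only for the diagonal terms $a_k$, which already carry the factor $v_k^2$. The $a^{(kl)}$ come from the double sum weighted by $\lambda_r$ and carry products of differences that are not squares; converting them is the real work. The paper does this in three steps:
\begin{itemize}
\item It substitutes $\lambda_5=-\lambda$, so $\xi_k$ becomes $q_k$ of (\ref{qk}).
\item It uses $(\lambda+\lambda_i)(\lambda+\lambda_j)=q_4-(\lambda_k-\lambda_4)(2\lambda_i+2\lambda_j+\lambda_k)$ for $\{i,j,k\}=\{1,2,3\}$, and its analogues, to write the $b^{(kl)}$ as quadratics in $q_k$.
\item It regroups them into the four combinations (\ref{lnew1})--(\ref{lnew2}), each equal to $v_k^2q_k^2$ plus a remainder, with the remainders cancelling in the sum.
\end{itemize}
This yields $\mathcal{A}=\sum_k v_k^2q_k\bigl(\lambda q_k-\prod_{i\neq k}(\lambda+\lambda_i)\bigr)=\sum_k v_k^2p_kq_k$. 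Then $\sigma_1=0$, together with the expansions of $\sigma_2$ and $\sigma_3$, turns $p_l$ and $q_l$ into the two factors of (\ref{Ass}) for the complementary triple. A correct symbolic verification of the corrected identity would also be a valid, if opaque, proof. As written, though, your proposal stops exactly where the lemma begins.
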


\begin{proof}
Let 
	\begin{equation}\label{letL}
	\begin{aligned}
		d\Phi =& (-1)^n \,2 \,\Big( \left(n-2\right)!\, f_3 + \mathcal{L}\Big)\cdot \mathrm{vol}\\
		=&(-1)^n \,2\, \left( \left(n-2\right)!\, f_3 + \frac{(n-2)!}{n^2}\sum_{r=1}^nL(r)h_r^2\right)\cdot \mathrm{vol}.
	\end{aligned}
\end{equation}
%$\mathcal{L}$ denote the summation term inside the parentheses in $(\ref{dPhi})$. 
Then it follows from $(\ref{eqdPhi})$ and $(\ref{lam1})$ that
\begin{equation}\label{L}
	\begin{aligned}
		\mathcal{L}=&(n-2)!\sum_{\substack{p,q,r=1 \\ p,q,r \text{ are distinct}}}^{n}\lambda_{r}\frac{\lambda_{qr}\lambda_{pr}}{(\lambda_{q}-\lambda_{r})(\lambda_{p}-\lambda_{r})}+(n-2)!\sum_{\substack{p,r=1 \\ p\neq r}}^n\frac{(\lambda_{pr}+\lambda_{rr})\lambda_{pr}}{(\lambda_{r}-\lambda_{p})}\\
		=&\frac{(n-2)!}{n^2}\sum_{r=1}^n\left(\sum\limits_{p=1; p\neq r}^{n}
		\frac{1}{(\lambda_r - \lambda_p) \left( \prod_{k=1; k\neq p}^{n} (\lambda_k - \lambda_p)\right)^2}	\right.\\
			&\qquad\qquad+ \sum\limits_{p=1; p\neq r}^{n}\frac{1}{(\lambda_r - \lambda_p) 
				\prod_{k=1;k\neq p}^{n} (\lambda_k - \lambda_p)
				\prod_{l=1;l\neq r}^{n} (\lambda_l - \lambda_r)}\\
		&\left.+\sum\limits_{\substack{p,q=1\\ p\neq q;p,q\neq r}}^{n} 
		\frac{\lambda_r}{(\lambda_r-\lambda_p)(\lambda_r-\lambda_q)\prod_{k=1;k\neq p}^{n} (\lambda_k - \lambda_p)
			\prod_{l=1;l\neq q}^{n} (\lambda_l - \lambda_q)}   
		\right)h_r^2.
	\end{aligned}
\end{equation}

%Now we define $L(r)$ as follows:
%$$\mathcal{L}=\frac{(n-2)!}{n^2}\sum_{r=1}^nL(r)h_r^2.$$
Next, we only consider the case $n=5$. %in order to express $L(r)$ for $r=1,2,\cdots,5$, 
We take $r=5$ as a representative example.
Firstly, we introduce some notations.
Let
\begin{equation}\label{xik}
	\xi_k=\sum_{i,j=1;i<j;i,j\neq k }^{4}\lambda_{i}\lambda_{j}+\lambda_k\lambda_{5}-(\lambda_k+\lambda_{5})\sum_{i=1;i\neq k}^{4}\lambda_{i}+\lambda_{k}^2+\lambda_{5}^2
\end{equation}
and 
\begin{equation}\label{ak}
\begin{aligned}
	a_k=&-\prod_{i,j=1;i<j;i,j\neq k}^4(\lambda_{i}-\lambda_{j})^2\prod_{i=1;i\neq k}^4(\lambda_{i}-\lambda_{5})\xi_k
\end{aligned}
\end{equation}
for $k=1,2,3,4.$
%where %$$\xi_1=\lambda_{2}\lambda_{3}+\lambda_{1}\lambda_{5}+\lambda_{2}\lambda_{4}+\lambda_{3}\lambda_{4}-\left(\lambda_{1}+\lambda_{5}\right)\left(\lambda_{2}+\lambda_{3}+\lambda_{4}\right)+{\lambda_{1}}^2+{\lambda_{5}}^2$$
%and $\xi_k(k=2,3,4)$ is obtained by swapping $\lambda_1$ and $\lambda_{k}$ in $\xi_1$.

Write
\begin{equation}\label{akl}
\begin{aligned}
	a^{(kl)}=(-1)^{k+l+1}\prod_{\substack{i,j=1;i<j\\(i,j)\neq (k,l),(k,5),(l,5)}}^5(\lambda_{i}-\lambda_{j})(\lambda_{m}-\lambda_{n})(\lambda_{m}-\lambda_{5})(\lambda_{n}-\lambda_{5})
\end{aligned}
\end{equation}
for $k<l$, $m<n$ and $\{k,l,m,n\}=\{1,2,3,4\}$.

Then we define
\begin{equation}\label{mA}
\mathcal{A}=-2\lambda_{5}\sum_{k,l=1;k<l}^{4}a^{(kl)}+\sum_{k=1}^{4}a_k,
\end{equation}
and by putting $L(5)$ over a common denominator, we obtain
\begin{equation}\label{eqLl}
L(5)=\frac{\mathcal{A}}{	\prod_{i,j=1;i<j}^5(\lambda_{i}-\lambda_{j})^2}.
\end{equation}
The expanded form of $L(5)$ is given in Appendix \ref{A1}.
% it is determined that 
%\begin{equation}\label{eqLl}
%	\prod_{i,j=1;i<j}^4(\lambda_{i}-\lambda_{j})^2L(5)=\prod_{i=1}^4(\lambda_{i}-\lambda_{5})^2l.
%\end{equation}
%$$6L(5)=\frac{(\lambda_{1}-\lambda_{5})^2(\lambda_{2}-\lambda_{5})^2(\lambda_{3}-\lambda_{5})^2(\lambda_{4}-\lambda_{5})^2}{(\lambda_{2}-\lambda_{1})^2(\lambda_{4}-\lambda_{1})^2(\lambda_{3}-\lambda_{1})^2(\lambda_{4}-\lambda_{2})^2(\lambda_{3}-\lambda_{2})^2(\lambda_{4}-\lambda_{3})^2}l.$$
%It follows that 
%\begin{equation}
%	L(5) > 0 \, \iff  \, l > 0.
%\end{equation}

In the following, we show that $\mathcal{A}=A(5)$. Let $\lambda=\lambda_{1}+\lambda_{2}+\lambda_{3}+\lambda_{4}$. Since $H=0$, substituting $\lambda_{5}$ for $-\lambda$ in $\mathcal{A}$, we obtain
$$\begin{aligned}
	\mathcal{A} = &2\lambda\sum_{k,l=1;k<l}^4(-1)^{k+l+1}b^{(kl)}%_{new}
	-\sum_{k=1}^{4}q_k\prod_{i,j=1;i<j;i,j\neq k}^4(\lambda_{i}-\lambda_{j})^2\prod_{i=1;i\neq k}^{4}(\lambda_{i}+\lambda),
\end{aligned}
$$
where
$$\begin{aligned}
	b^{(kl)}=\prod_{\substack{i,j=1;i<j;(i,j)\neq (k,l)}}^4(\lambda_{i}-\lambda_{j})(\lambda_{m}-\lambda_{n})(\lambda_{m}+\lambda)^2(\lambda_{n}+\lambda)^2
\end{aligned}
$$
for $k<l,m<n$, $\{k,l,m,n\}=\{1,2,3,4\}$,
and
\begin{equation}\label{qk}
	\begin{aligned}
		q_k &= \lambda_{k}^2 +2\sum_{i=1;i\neq k}^{4}\lambda_{i}^2+\sum_{i=1;i\neq k}^{4}\lambda_{i}\lambda_{k}+5\sum_{i,j=1;i<j;i,j\neq k}^{4}\lambda_{i}\lambda_{j}
	\end{aligned}
\end{equation}
for $ k=1,2,3,4.$ Here $b^{(kl)}$ and $q_k$ correspond to $a^{(kl)}$ and $\xi_k$ in (\ref{mA}), respectively.
%for $k=1,2,\cdots,4$.

%We now proceed to further simplify $l$.
Notice that 
$$(\lambda+\lambda_{i})(\lambda+\lambda_{j})=q_4-(\lambda_{m}-\lambda_{n})(2\lambda_{i}+2\lambda_{j}+\lambda_{k})$$
for $\{i,j,k\}=\{1,2,3\}$ and $\{i,j,m,n\}=\{1,2,3,4\}$.
Substituting these three equalities into $b^{(kl)}$ $(k<l,k,l=1,2,3,4)$, we obtain
\begin{equation}\label{lnew1}
	\begin{aligned}
		&b^{(34)}-b^{(24)}+b^{(14)}= \prod_{i,j=1;i<j}^3(\lambda_i- \lambda_j)^2\left(q_4^2
		- (3\lambda-2\lambda_4)\prod_{i=1}^{3}(\lambda_i -\lambda_4) \right).
	\end{aligned}
\end{equation}

More precisely, this is because, when viewing $q_4$ as a variable, the coefficient of the square term of $q_4$ in $b^{(34)}-b^{(24)}+b^{(14)}$ is
$$
\begin{aligned}
	T_{squ} &=\sum_{k=1}^3(-1) ^{k+1}\prod_{\substack{i,j=1;i<j;(i,j)\neq (k,4)}}^4(\lambda_{i}-\lambda_{j})(\lambda_{m}-\lambda_{n})=\prod_{i,j=1;i<j}^3(\lambda_{i}-\lambda_{j})^2,
\end{aligned}
$$
where $m<n$, $\{k,m,n\}=\{1,2,3\}$. Moreover, the coefficients of the mixed term of $q_4$ and the constant term of $q_4$ in $b^{(34)}-b^{(24)}+b^{(14)}$ are respectively

$$
\begin{aligned}
	T_{mix} =&-2\prod_{i,j=1;i<j}^{4} (\lambda_i - \lambda_j) \left(\sum_{i,j=1;i<j}^{3}(-1)^{i+j+1}(\lambda_{i}-\lambda_{j})\right.\\
	&\qquad\qquad\qquad\cdot(2\lambda_{i}+2\lambda_{j}+\lambda_{k})\Bigg)= 0\ \mbox{for}\ \{i,j,k\} =\{1,2,3\}
\end{aligned}
$$
%where $k=\{1,2,3\}\backslash\{i,j\}$.
and	
$$
\begin{aligned}
	T_{con} =&\prod_{i,j=1;i<j}^{4} (\lambda_i - \lambda_j) \left(\sum_{k,l=1;k<l}^{3}(-1)^{k+l+1}(\lambda_k - \lambda_l)(\lambda_m- \lambda_n)(2\lambda_k + 2\lambda_l+ \lambda_m)^2\right)\\
	= &-\prod_{i,j=1;i<j}^3(\lambda_i- \lambda_j)^2 \prod_{i=1}^3(\lambda_i- \lambda_4) (3\lambda-2\lambda_{4}),
\end{aligned}
$$
for $m<n$, $\{k,l,m,n\}=\{1,2,3,4\}$.

Similarly, we have
\begin{equation}\label{lnew2}
	\begin{aligned}
		&b^{(34)}+b^{(23)}-b^{(13)}= \prod_{i<j;i,j=1,2,4}(\lambda_i- \lambda_j)^2\left(q_3^2
-\prod_{i=1,2,4}(\lambda_i - \lambda_3) (3\lambda-2\lambda_3)\right),\\
		&b^{(23)}+b^{(12)}-b^{(24)}= \prod_{i<j;i,j=1,3,4}(\lambda_i- \lambda_j)^2\left(q_2^2+\prod_{i=1,3,4}(\lambda_i - \lambda_2) (3\lambda-2\lambda_2)\right),\\
		&b^{(12)}-b^{(13)}+b^{(23)}= \prod_{i<j;i,j=2,3,4}(\lambda_i- \lambda_j)^2\left(q_1^2
		-\prod_{i=2,3,4}(\lambda_i - \lambda_1) (3\lambda-2\lambda_1)\right).
	\end{aligned}
\end{equation}

By adding the four equations in $(\ref{lnew1})$ and $(\ref{lnew2})$, and observing that the sum of the terms independent of $q_k^2$ $(k=1,2,3,4)$ is zero, we have
\begin{equation*}\label{l}
\mathcal{A}=\sum_{k=1}^{4}\left(\prod_{i,j=1;i<j;i,j\neq k}^4(\lambda_i - \lambda_j)^2\right)\left(\lambda q_k^2-\prod_{i=1;i\neq k}^4(\lambda+\lambda_{i})q_k\right).
\end{equation*}
%$$
%\begin{aligned}
%	l= &6(\lambda_1 - \lambda_2)^2 (\lambda_1 - \lambda_3)^2 (\lambda_2 - \lambda_3)^2 \Bigl[ (\lambda_1 + \lambda_2 + \lambda_3 + \lambda_4) q_4^2 \\
%	&\qquad - (\lambda_1 + \lambda_2 + 2\lambda_3 + \lambda_4)(\lambda_1 + 2\lambda_2 + \lambda_3 + \lambda_4)(2\lambda_1 + \lambda_2 + \lambda_3 + \lambda_4) q_4 \Bigr] \\
%	&+ 6(\lambda_1 - \lambda_2)^2 (\lambda_1 - \lambda_4)^2 (\lambda_2 - \lambda_4)^2 \Bigl[ (\lambda_1 + \lambda_2 + \lambda_3 + \lambda_4) q_3^2 \\
%	&\qquad - (\lambda_1 + \lambda_2 + \lambda_3 + 2\lambda_4)(\lambda_1 + 2\lambda_2 + \lambda_3 + \lambda_4)(2\lambda_1 + \lambda_2 + \lambda_3 + \lambda_4) q_3 \Bigr] \\
%	&+ 6(\lambda_1 - \lambda_3)^2 (\lambda_1 - \lambda_4)^2 (\lambda_3 - \lambda_4)^2 \Bigl[ (\lambda_1 + \lambda_2 + \lambda_3 + \lambda_4) q_2^2 \\
%	&\qquad - (\lambda_1 + \lambda_2 + \lambda_3 + 2\lambda_4)(\lambda_1 + \lambda_2 + 2\lambda_3 + \lambda_4)(2\lambda_1 + \lambda_2 + \lambda_3 + \lambda_4) q_2 \Bigr] \\
%	&+6 (\lambda_2 - \lambda_3)^2 (\lambda_2 - \lambda_4)^2 (\lambda_3 - \lambda_4)^2 \Bigl[ (\lambda_1 + \lambda_2 + \lambda_3 + \lambda_4) q_1^2 \\
%	&\qquad - (\lambda_1 + \lambda_2 + \lambda_3 + 2\lambda_4)(\lambda_1 + \lambda_2 + 2\lambda_3 + \lambda_4)(\lambda_1 + 2\lambda_2 + \lambda_3 + \lambda_4) q_1 \Bigr]
%\end{aligned}
%$$

%Let
%$$
%S_k=\sum_{i\ne k}\lambda_i,\quad 
%Q_k=\sum_{ i,j=1; i<j; i,j\neq k}^4\lambda_i\lambda_j,\quad 
%P_k=\prod_{i\ne k}\lambda_i \ \mbox{for}\ k=1,2,3,4.
%$$
We choose $i,j,l$ with $\{i,j,l,k\}=\{1,2,3,4\}$. Then we have
$$\begin{aligned}
	\lambda q_k=&\lambda\left(\lambda_{k}^2 +2\sum_{i=1;i\neq k}^{4}\lambda_{i}^2+\sum_{i=1;i\neq k}^{4}\lambda_{i}\lambda_{k}+5\sum_{i,j=1;i<j;i,j\neq k}^{4}\lambda_{i}\lambda_{j}\right)\\
=&\left(\lambda_k+s_1^{ijl}\right)\left(\lambda_k^2+\lambda_ks_1^{ijl}+2\left(s_1^{ijl}\right)^2+s_2^{ijl}\right)\\
=&\lambda_k^3+2\lambda_k^2s_1^{ijl}+3\lambda_k\left(s_1^{ijl}\right)^2+2\left(s_1^{ijl}\right)^3+\lambda s_2^{ijl}
\end{aligned}
$$
and 
$$
\begin{aligned}
	\prod_{i=1;i\ne k}^4\left(\lambda+\lambda_i\right)
	=&\lambda^3+s_1^{ijl}\lambda^2+s_2^{ijl}\lambda+s_3^{ijl}\\
=&\left(\lambda_k+s_1^{ijl}\right)^3+s_1^{ijl}\left(\lambda_k+s_1^{ijl}\right)^2+s_2^{ijl}\left(\lambda_k+s_1^{ijl}\right)+s_3^{ijl}\\
	=&\lambda_k^3+4\lambda_k^2s_1^{ijl}+5\lambda_k(s_1^{ijl})^2+2(s_1^{ijl})^3+\lambda s_2^{ijl}+s_3^{ijl}.
\end{aligned}
$$
Subtracting the two expressions yields
$$
\lambda q_k-\prod_{i=1;i\ne k}^4(\lambda+\lambda_i)
=-2\lambda_k^2s_1^{ijl}-2\lambda_k\left(s_1^{ijl}\right)^2-s_3^{ijl}
=-2\lambda\lambda_k(\lambda-\lambda_k)-\prod_{i=1;i\ne k}^4\lambda_i.
$$

Consequently, we obtain
\begin{equation}\label{eql}
	\begin{aligned}
		\mathcal{A}=\sum_{k=1}^{4}\prod_{\substack{i,j=1;i<j;i,j\neq k}}^4\left(\lambda_{i}-\lambda_{j}\right)^2p_kq_k,
	\end{aligned}
\end{equation}
	in which
\begin{equation}\label{p}
p_k=-2\lambda\lambda_{k}\left(\lambda-\lambda_{k}\right)-\prod_{i=1;i\neq k}^{4}\lambda_{i}\ \mbox{for}\ k=1,2,3,4.
\end{equation}
	%\begin{equation*}
	%	\begin{aligned}
		%	p_4=& -2\lambda_4 (\lambda_1 + \lambda_2 + \lambda_3)(\lambda_1 + \lambda_2 + \lambda_3+\lambda_{4}) - \lambda_1 \lambda_2 \lambda_3,\\
		%	p_3=& -2\lambda_3 (\lambda_1 + \lambda_2 + \lambda_4)(\lambda_1 + \lambda_2 + \lambda_3+\lambda_{4}) - \lambda_1 \lambda_2 \lambda_4 ,\\
		%	p_2=&-2\lambda_2 (\lambda_1 + \lambda_3 + \lambda_4)(\lambda_1 + \lambda_2 + \lambda_3+\lambda_{4}) - \lambda_1 \lambda_3 \lambda_4 ,\\
		%	p_1=&-2\lambda_1 (\lambda_2 + \lambda_3 + \lambda_4)(\lambda_1 + \lambda_2 + \lambda_3+\lambda_{4}) - \lambda_2 \lambda_3 \lambda_4.
		%	\end{aligned}
	%\end{equation*}
	
%	Now we show that $\mathcal{A}=A(5)$. 
	%and we only illustrate for the term of $k=4$; the other three terms can be handled similarly. 
	Below, we use the polynomials $s_1^{ijk}, s_2^{ijk}$ and $s_3^{ijk}$
	defined in (\ref{s1}) to express $p_k$ and $q_k(i,j,k=1,2,3,4)$. We fix $l$ such that $\{i,j,k,l\}=\{1,2,3,4\}$. First, it is obvious that
\begin{equation}\label{pl1}
	\begin{aligned}
		p_l=&-2\lambda^2\lambda_l+2\lambda\lambda_l^2-s_3^{ijk}\\
		=&-2\lambda\lambda_l(\lambda-\lambda_l)-s_3^{ijk}.
	\end{aligned}
\end{equation}

		From $H=0$ we have
		$$\lambda_l=-s_1^{ijk}-\lambda_5=-s_1^{ijk}+\lambda,$$
		hence
		\begin{equation}\label{pl2}
			s_1^{ijk}=\lambda-\lambda_l. 
		\end{equation}
		
	Also, since
	$$
		\begin{aligned}
			\sigma_3=&s_3^{ijk}+\lambda_l s_2^{ijk}+\lambda_5\left(s_2^{ijk}+\lambda_l s_1^{ijk}\right)\\
			=&\left(s_3^{ijk}+\lambda_l s_2^{ijk}\right)-\lambda\left(s_2^{ijk}+\lambda_l s_1^{ijk}\right)\\
			=&s_3^{ijk}-s_1^{ijk}s_2^{ijk}-\lambda\lambda_l s_1^{ijk},
		\end{aligned}
$$
we have
\begin{equation}\label{pl3}
	-\lambda\lambda_l s_1^{ijk}=s_1^{ijk}s_2^{ijk}-s_3^{ijk}+\sigma_3. 	\end{equation}

	Combining (\ref{pl1}), (\ref{pl2}) and (\ref{pl3}), we obtain
	\begin{equation}\label{eqp4}
		p_l=2s_1^{ijk}s_2^{ijk}-3s_3^{ijk}+2\sigma_3,
	\end{equation}
	
Next, it follows from (\ref{qk}) that 
		\begin{equation}\label{eqq4new}
		q_l = 2\left(s^{ijk}_1\right)^2+s^{ijk}_2-\lambda_{l}\lambda_{5}.
	\end{equation}	
Since $H=0$, we have 
	\begin{equation}\label{eqC1}
		\begin{aligned}
			\sigma_2=&s_2^{ijk}+\left(\lambda_5 +\lambda_l\right)s_1^{ijk}+\lambda_l\lambda_5\\
			=&\lambda_{l}\lambda_{5}-\left(s^{ijk}_1\right)^2+s^{ijk}_2.
		\end{aligned}
	\end{equation}

	Hence, from $(\ref{eqq4new})$ and $(\ref{eqC1})$, we get
		\begin{equation}\label{eqq4}
	q_l = \left(s^{ijk}_1\right)^2+2s^{ijk}_2-\sigma_2.
\end{equation}	
	%\begin{equation}\label{eqC3}
	%	\begin{aligned}
		%	&\prod_{\substack{i<j\\i,j=1,2,3}}(\lambda_i - \lambda_j)^2 \\
		%	&=(d^{123}_1)^2(d^{123}_2)^2 - 4(d^{123}_2)^3 - 4(d^{123}_1)^3 d^{123}_3 
		%	- 27(d^{123}_3)^2 + 18d^{123}_1d^{123}_2d^{123}_3.
		%	\end{aligned}
	%\end{equation}
	
	Based on the combination of $(\ref{eql},\ref{eqp4},\ref{eqq4})$, we obtain \begin{equation}\label{Al}
		A(5)=\mathcal{A}.
	\end{equation}
Therefore, according to $(\ref{eqLl})$, we have
	%$$ L(5) > 0 \quad \iff  \quad l > 0\quad \iff C(5)>0.$$
	\begin{equation*}
	L(5)=\frac{A(5)}{	\prod_{i,j=1;i<j}^5(\lambda_{i}-\lambda_{j})^2}.
	\end{equation*}
	
Besides, it follows from $(\ref{L})$ that $L(r)$ $(r\in I_5)$ is obtained from $L(5)$ by replacing $\lambda_{r}$ with $\lambda_{5}$, and $A(r)$ $(r\in I_5)$, which is given in $(\ref{Ass})$, can also be obtained from $A(5)$ in the same way. Hence, we have
	\begin{equation}\label{eqAr}
L(r)=\frac{A(r)}{	\prod_{i,j=1;i<j}^5(\lambda_{i}-\lambda_{j})^2}
\end{equation}
for $r=1,2,\cdots,5$.

Consequently, combining $(\ref{letL})$ and $(\ref{eqAr})$, we obtain
	\begin{equation*}
		\begin{aligned}
		d\Phi =& -12\left(  f_3 +\frac{1}{25}\sum_{r=1}^5L(r)h_r^2\right)\cdot \mathrm{vol}\\
		=& -12 \left(3\,\sigma_3 +\frac{1}{25\prod_{i,j=1;i<j}^5(\lambda_{i}-\lambda_{j})^2}\sum_{r=1}^5A(r)h_r^2\right)\cdot \mathrm{vol}.
		\end{aligned}
	\end{equation*}
	\end{proof}

\section{Proof of Theorems}\label{Global thm}
\subsection{Proof of Theorem \ref{main thm}}\label{prof main thm}
\begin{proof}

When $\Omega$ is empty, it follows from \cite[3.2.1] {TWY}  that all principal curvatures of $M^5$ are constants, and the theorem holds. Therefore, in the following we only discuss the case that $\Omega\neq \emptyset$.

%\subsubsection{The differential of the $(n-1)$-form $\Phi$.}

%\subsubsection{The calculation of $dh\wedge\Phi$}	
%For convenience, in this subsection we compute the $5$-form  $dh\wedge\Phi$, which will be used in the proof of the theorem. 
%For $n=5$, $\Phi=\sum_{\sigma}S(\sigma)(\lambda_{i_{4}}+\lambda_{i_5})\theta_{i_1}\wedge\theta_{i_2}\wedge\theta_{i_{3}}\wedge\omega_{i_{4}i_{5}}.$

%\subsubsection{The condition $A(r)>0$.}\label{secC(r)}
%In this subsection, 
%we show that for any $r=1,2,\cdots,5$, the inequality $L(r) > 0$ is equivalent to $A(r)>0$, %whenever the condition $C(r)> 0$ holds for the elementary symmetric polynomials in the eigenvalues $\lambda_i$, for $r,i=1,2,\cdots,5$. 
%and we only prove the case $r=5$, the others are similar. 

	%\subsubsection{Proof that $h$ is constant}
	%When $n=5$,
	 Under our convention $\sigma_3\geq 0$, assumptions $(\ref{Ass})$ and $(\ref{key form})$, %the assumptions $A(r)>0$ for $r=1,2,\cdots,5$, 
	 we have 
	\begin{equation}\label{int1}
		\int_{Y}-d\Phi \geq 0.
	\end{equation}
	%\begin{equation}\label{u}
	%	dh\wedge\Phi=\sum_{i=1}^{5}u_ih_i^2.
	%\end{equation}
	
	As in \cite{dB}, for any smooth function $\eta:(a,b)\rightarrow \mathbb{R}$ with compact support, we apply Stokes' theorem to $$d\left(\left(\eta\circ h\right)\Phi\right)=\left(\eta\circ h\right)d\Phi+\left(\eta'\circ h\right)dh\wedge\Phi $$
	and obtain
	
	\begin{equation}\label{int2}
		\int_{Y}\left(\eta\circ h\right)d\Phi+\int_{Y}\left(\eta'\circ h\right)dh\wedge\Phi =0.
	\end{equation}
	
	Given a small $\epsilon>0$, we choose a smooth function $\eta_{\epsilon}:\mathbb{R}\rightarrow\mathbb{R}$ such that 
	\begin{itemize}
		\item[(1)] $0\leq\eta_{\epsilon}\leq 1$;
		\item[(2)] $\eta_{\epsilon}(t)=0$ for $a\leq t \leq a+\frac{\epsilon}{n}$ or $b-\frac{\epsilon}{n}\leq t \leq b$;
		\item[(3)] $\eta_{\epsilon}(t)=1$ for $a+\epsilon\leq t\leq b-\epsilon$;
		\item[(4)] $\eta_{\epsilon}'(t)\geq 0$ on $(-\infty,\frac{a+b}{2})$, and $\eta_{\epsilon}'(t)\leq 0$ on $(\frac{a+b}{2},+\infty)$.
	\end{itemize}
	
	Then it follows from $(\ref{u},\ref{int1},\ref{int2})$ that 
	\begin{equation}\label{int}
		\begin{aligned}
			0&\leq\int_{Y}-\left(\eta_{\epsilon}\circ h \right)d\Phi =\int_Y\left(\eta_{\epsilon}'\circ h\right )dh\wedge \Phi\\
			&=\int_Y\left(\eta_{\epsilon}'\circ h\right)\sum_{i=1}^nu_ih_i^2 \cdot \mathrm{vol}\leq\int_{Y}C\left|\eta_{\epsilon}'\circ h\right|\left|dh\right|^2\cdot \mathrm{vol},
		\end{aligned}
	\end{equation}
	where the last inequality follows from Lemma \ref{bound}.

	Moreover, for any smooth function $\gamma:\mathbb{R}\rightarrow\mathbb{R}$, applying Stokes' theorem to
	$$d^*\left(\left(\gamma\circ h\right)dh\right)=\left(\gamma'\circ h\right)\left|dh\right|^2\cdot \mathrm{vol}+\left(\gamma\circ h\right)\Delta h \cdot \mathrm{vol}$$
	yields
	\begin{equation}\label{int3}
		\int_{M^5}\left(\gamma'\circ h\right)\left|dh\right|^2\cdot \mathrm{vol}+\int_{M^5}\left(\gamma\circ h\right)\Delta h \cdot \mathrm{vol}=0.
	\end{equation}
	
	Next, we construct a new smooth function $\gamma_\epsilon$ from $\eta_{\epsilon}$, defined as follows.
	$$
	\gamma_\epsilon =
	\begin{cases} 
		\eta_\epsilon - 1 & \text{on} \left( -\infty, \frac{a+b}{2} \right], \\
		1 - \eta_\epsilon & \text{on} \left[ \frac{a+b}{2}, +\infty \right).
	\end{cases}
	$$
	It is obvious that $\gamma_\epsilon'=\left|\eta_\epsilon'\right|$. Then it follows from $(\ref{int3})$ that 
	$$\int_Y\left|\eta_\epsilon'\circ h\right| \left|dh\right|^2\cdot \mathrm{vol}=-\int_{M^5}\left(\gamma_\epsilon\circ h\right)\Delta h \cdot \mathrm{vol}\leq \int_{M^5}\left|\gamma_\epsilon\circ h\right|\left|\Delta h\right|\cdot \mathrm{vol}.$$

	From the construction of $\gamma_\epsilon$, we have $|\gamma_\epsilon|\leq 1$ and $\gamma_\epsilon\circ h=0$ on $Y_\epsilon$. Thus by Lemma \ref{lemdB}, we obtain
	\begin{equation*}
		\begin{aligned}
			\lim_{\epsilon\to0}\int_{M^5}|\gamma_\epsilon\circ h||\Delta h| \cdot \mathrm{vol}=\lim_{\epsilon\to0}\int_{M^5-Y_\epsilon}|\gamma_\epsilon\circ h||\Delta h| \cdot \mathrm{vol}\leq\lim_{\epsilon\to0}\int_{M^5-Y_\epsilon}|\Delta h| \cdot \mathrm{vol}=0.
		\end{aligned}
	\end{equation*}
	
	Furthermore,
	$$\lim_{\epsilon\to0}\int_{Y}\left|\eta_\epsilon'\circ h\right|\left|dh\right|^2\cdot \mathrm{vol}=0.$$
	Together with $(\ref{int})$, we obtain
	$$\lim_{\epsilon\to0}\int_{Y}-\left(\eta_\epsilon\circ h\right)d\Phi=0.$$
	
	Finally, according to the convention $\sigma_3\geq 0$, $(\ref{key form})$ and assumption $(\ref{Ass})$, we have
	$$0\leq \int_{Y_{\epsilon'}}\frac{12}{25\prod_{i,j=1;i<j}^5(\lambda_{i}-\lambda_{j})^2}\sum_{r=1}^5A(r)h_r^2\cdot \mathrm{vol}\leq \int_{Y}-\left(\eta_\epsilon\circ h\right)d\Phi$$
	for all $0<\epsilon\leq\epsilon'<\frac{b-a}{2}$. %Moreover,under the condition that %$C(r)>0,(r=1,\cdots,5)$, we have $L(r)>0$ on $Y$ for $r=1,2,3,4,5$, 
	Thus we must have $h_r=0$ on $Y$ for any $r=1,2,\cdots, 5$. Thus, $h$ is locally constant on each connected component of $Y$. Since $Y$ is dense in the connected manifold $M^5$ and $h$ is continuous, it follows that 
$h$ is constant on $M^5$.

		We remark that if $(\lambda_1,\lambda_{2},\lambda_{3},\lambda_{4},\lambda_{5})=(-6,-5,1,3,7)$, we have $$A(i) > 0\,(i\in I_5)\ \mbox{and}\ A(5) < 0,$$ so not all $\lambda_i$ $(i=1,2,\cdots,5)$ satisfy $(\ref{Ass})$. %Then the proof of Theorem \ref{main thm} is completed.

	\end{proof}

	\subsection{Proof of Corollary \ref{cor}}\label{prof cor}
	\begin{proof}
		In this proof,  for the first three configurations, we use the form of $A(5)=\mathcal{A}$ given in (\ref{eql}); the other $A(r) (r\in I_5)$ are analogous, and we note that for each $i=1,2,\cdots,5$, this expression for $A(i)$ does not contain $\lambda_i$.
		We only need to show that for the four principal curvature configurations $(1)$, $(2)$, $(3)$ and $(4)$, the inequality $A(r)>0$ holds for every $r=1,2,\cdots,5$.
		%We only need to prove that Assumption $(3)$ holds when the principal curvatures belong to $(1'')$. Below we verify separately that for each class in $(1'')$, both $C(r) > 0$ for all $r=1,2,\cdots,5$ and $\sigma_3 \geq 0$ hold. 
		
		%In the case where
		
		Case (1): $\lambda_{1}<\lambda_{2}<\lambda_{3}=\lambda_{4}<0<\lambda_{5}$. %we always have Assumption $(3)$ holds, and at the same time $\sigma_3\geq 0$. This is because at this time: 
		In this case, substituting $\lambda_{4}$ for $\lambda_{3}$ in $A(5)$, we get 
		\begin{equation}\label{case1A5}
			\begin{aligned}
				A(5)=-2\lambda_{4}\prod_{i,j=1,2,4;i<j}{\left(\lambda_{i}-\lambda_{j}\right)}^2\mathcal{P}_1^2
			\end{aligned}
		\end{equation}
		where $$\mathcal{P}_1=2{\lambda_{1}}^2+5\lambda_{1}\lambda_{2}+6\lambda_{1}\lambda_{4}+2{\lambda_{2}}^2+6\lambda_{2}\lambda_{4}+4{\lambda_{4}}^2>0.$$
		
		Besides, $A(1)$ and $A(2)$ are the results of replacing $\lambda_{1}$ or $\lambda_{2}$ in $A(5)$ with $\lambda_{5}$, respectively. Thus it is clear that $A(1), A(2)$ and $A(5) >0$.
		
		Next, replacing $\lambda_{5}$ with $-\lambda_{1}-\lambda_{2}-2\lambda_{4}$, we obtain
		\begin{equation}\label{case1A3}
			\begin{aligned}
				A(3)=-{\left(\lambda_{1}-\lambda_{2}\right)}^2\mathcal{P}_1^2\cdot \mathcal{P}_2\left(\lambda_1,\lambda_{2},\lambda_{4}\right),
			\end{aligned}
		\end{equation}
		where 
		$$
		\begin{aligned}
			\mathcal{P}_2\left(\lambda_1,\lambda_{2},\lambda_{4}\right)=&{\lambda_{1}}^4\lambda_{2}+{\lambda_{1}}^4\lambda_{4}+2{\lambda_{1}}^3{\lambda_{2}}^2+6{\lambda_{1}}^3\lambda_{2}\lambda_{4}+4{\lambda_{1}}^3{\lambda_{4}}^2+2{\lambda_{1}}^2{\lambda_{2}}^3\\
			&+9{\lambda_{1}}^2{\lambda_{2}}^2\lambda_{4}-5{\lambda_{1}}^2\lambda_{2}{\lambda_{4}}^2-4{\lambda_{1}}^2{\lambda_{4}}^3+\lambda_{1}{\lambda_{2}}^4+6\lambda_{1}{\lambda_{2}}^3\lambda_{4}\\
			&-5\lambda_{1}{\lambda_{2}}^2{\lambda_{4}}^2-34\lambda_{1}\lambda_{2}{\lambda_{4}}^3-16\lambda_{1}{\lambda_{4}}^4+{\lambda_{2}}^4\lambda_{4}+4{\lambda_{2}}^3{\lambda_{4}}^2\\
			&-4{\lambda_{2}}^2{\lambda_{4}}^3-16\lambda_{2}{\lambda_{4}}^4+47{\lambda_{4}}^5,
		\end{aligned}$$
		and $A(4)$ is obtained by replacing $\lambda_{4}$ in $A(3)$ with $\lambda_{3}$. 
		
		Now we claim that $\mathcal{P}_2(\lambda_1,\lambda_{2},\lambda_{4})<0$, and thus $A(3)>0$.
		Note that $\mathcal{P}_2$ is a polynomial of degree $5$, so it suffices to prove that $f(\lambda') = -\mathcal{P}_2(\lambda_1,\lambda_{2},\lambda_{4}) > 0$ for  $\lambda'=(\lambda_1',\lambda_{2}',\lambda_{4}')$ with $\lambda_1' > \lambda_2' > \lambda_4' > 0$, where we set $$\lambda_{1}'=-\lambda_{1},\lambda_{2}'=-\lambda_{2}\ \mbox{and}\ \lambda_{4}'=-\lambda_{4}.$$ 
		
		Let $$x_1=\frac{\lambda_{1}'}{\lambda_{4}'}\ \mbox{and}\ x_2=\frac{\lambda_{2}'}{\lambda_{4}'}.$$ Then $x_1 > x_2 > 1$, and $f(\lambda') =(\lambda_4')^5 g\left(x_1,x_2\right)$, where 
		\begin{equation}\label{case1g}
		\begin{aligned}
			g\left(x_1,x_2\right)=&	{x_{1}}^4x_{2}+{x_{1}}^4+2{x_{1}}^3{x_{2}}^2+6{x_{1}}^3x_{2}+4{x_{1}}^3+2{x_{1}}^2{x_{2}}^3\\
			&+9{x_{1}}^2{x_{2}}^2-5{x_{1}}^2x_{2}-4{x_{1}}^2+x_{1}{x_{2}}^4+6x_{1}{x_{2}}^3-5x_{1}{x_{2}}^2\\
			&-34x_{1}x_{2}-16x_{1}+{x_{2}}^4+4{x_{2}}^3-4{x_{2}}^2-16x_{2}+47.
		\end{aligned}
	\end{equation}
		
		Therefore, it only remains to show that $g(x_1,x_2) > 0$ for all $x_1 > x_2 > 1$.
		By direct calculation, 
		$$
		\begin{aligned}
			&\frac{\partial g}{\partial x_1}=4{x_{1}}^3x_{2}+4{x_{1}}^3+6{x_{1}}^2{x_{2}}^2+18{x_{1}}^2x_{2}+12{x_{1}}^2\\
			&\qquad\quad+4x_{1}{x_{2}}^3+18x_{1}{x_{2}}^2-10x_{1}x_{2}-8x_{1}+{x_{2}}^4\\
			&\qquad\quad+6{x_{2}}^3-5{x_{2}}^2-34x_{2}-16,\\
			&\frac{\partial^2g}{\partial x_1^2}=12{x_{1}}^2x_{2}+12{x_{1}}^2+12x_{1}{x_{2}}^2+36x_{1}x_{2}+24x_{1}\\
			&\qquad\quad+4{x_{2}}^3+18{x_{2}}^2-10x_{2}-8.
		\end{aligned}
		$$
		
		Since $x_1>x_2>1$, it is obvious that $	\frac{\partial^2g}{\partial x_1^2}>0$. Thus $\frac{\partial g}{\partial x_1}$ is strictly increasing in $x_1$, and hence
		$$\begin{aligned}
			\frac{\partial g}{\partial x_1}\left(x_1,x_2\right)&>\frac{\partial g}{\partial x_1}\left(x_2,x_2\right)\\
			&=\left(x_2-1\right)\left(15{x_{2}}^3+61{x_{2}}^2+58x_{2}+16\right)>0.
		\end{aligned}
		$$
		
		Thus, it follows that
		$$\begin{aligned}
			g(x_1,x_2)&>g(x_2,x_2)\\
			&={\left(x_{2}-1\right)}^2\left(6{x_{2}}^3+35{x_{2}}^2+62x_{2}+47\right)>0.
		\end{aligned}
		$$
		
		Therefore, the claim holds. By a completely analogous argument, we can prove that $A(4)>0$. Finally, we point out that in this case, $\sigma_3\geq 0$ is obvious.
		
		Case (2): $\lambda_{1}=\lambda_{2}<\lambda_{3}<\lambda_{4}<\lambda_{5}$.  Similar to the Case (1), substituting $\lambda_{1}$ for $\lambda_{2}$, we can directly obtain $A(3),A(4), A(5)>0$ and for $A(1)$ $(\mbox{or}\ A(2))$, replacing $\lambda_{2}$ $(\mbox{or}\ \lambda_{1})$ with $-\frac{1}{2}(\lambda_{3}+\lambda_{4}+\lambda_{5})$, we get 
		$$32A(1)=32A(2)=\prod_{i,j=3,4,5;i<j}{({\lambda}_{i}-{\lambda}_{j})}^{2}\mathcal{P}_3(\lambda_3,\lambda_{4},\lambda_{5}),$$
		where 
		$$
		\begin{aligned}
			\mathcal{P}_3(\lambda_3,\lambda_{4},\lambda_{5})=&47{\lambda}_{3}^{5}+267{\lambda}_{3}^{4}{\lambda}_{4}+267{\lambda}_{3}^{4}{\lambda}_{5}+582{\lambda}_{3}^{3}{\lambda}_{4}^{2}+1060{\lambda}_{3}^{3}{\lambda}_{4}{\lambda}_{5}+582{\lambda}_{3}^{3}{\lambda}_{5}^{2}\\
			&+582{\lambda}_{3}^{2}{\lambda}_{4}^{3}+1570{\lambda}_{3}^{2}{\lambda}_{4}^{2}{\lambda}_{5}+1570{\lambda}_{3}^{2}{\lambda}_{4}{\lambda}_{5}^{2}+582{\lambda}_{3}^{2}{\lambda}_{5}^{3}+267{\lambda}_{3}{\lambda}_{4}^{4}\\
			&+1060{\lambda}_{3}{\lambda}_{4}^{3}{\lambda}_{5}+1570{\lambda}_{3}{\lambda}_{4}^{2}{\lambda}_{5}^{2}+1060{\lambda}_{3}{\lambda}_{4}{\lambda}_{5}^{3}+267{\lambda}_{3}{\lambda}_{5}^{4}+47{\lambda}_{4}^{5}\\
			&+267{\lambda}_{4}^{4}{\lambda}_{5}+582{\lambda}_{4}^{3}{\lambda}_{5}^{2}+582{\lambda}_{4}^{2}{\lambda}_{5}^{3}+267{\lambda}_{4}{\lambda}_{5}^{4}+47{\lambda}_{5}^{5}.
		\end{aligned}$$
		
		Thus it is clear that $A(1)=A(2)>0$ when $0<\lambda_{3}<\lambda_{4}<\lambda_{5}$. Now we claim that  $\mathcal{P}_3(\lambda_3,\lambda_{4},\lambda_{5})>0$ when $\lambda_{3}<0<\lambda_{4}<\lambda_{5}$ or $\lambda_{3}<\lambda_{4}<0<\lambda_{5}$, and then  $A(1)=A(2)>0$. Firstly, we notice that
		$$0>\lambda_{3}>-\frac{1}{3}(\lambda_{4}+\lambda_{5}), \lambda_{4}>-\frac{1}{4}\lambda_{5}\ \mbox{and}\ \lambda_{4}+\lambda_{5}>0.$$ 
		
		The third partial derivative of $\mathcal{P}_3(\lambda_3,\lambda_{4},\lambda_{5})$ with respect to $\lambda_{3}$ is
		$$
		\begin{aligned}
			\frac{\partial^3 \mathcal{P}_3}{\partial \lambda_{3}^3}=&2820{\lambda_{3}}^2+6408\lambda_{3}\left(\lambda_{4}+\lambda_{5}\right)+3492{\lambda_{4}}^2+6360\lambda_{4}\lambda_{5}+3492{\lambda_{5}}^2\\
			>&2820{\lambda_{3}}^2-2160\left(\lambda_{4}+\lambda_{5}\right)^2+3492{\lambda_{4}}^2+6360\lambda_{4}\lambda_{5}+3492{\lambda_{5}}^2\\
			=&2820{\lambda_{3}}^2+1020\left(\lambda_{4}+\lambda_{5}\right)^2+312{\lambda_{4}}^2+312{\lambda_{5}}^2
			>0,
		\end{aligned}$$
		thus  
		$$
		\begin{aligned}
			\frac{\partial^2 \mathcal{P}_3}{\partial \lambda_{3}^2}&>\frac{\partial^2 \mathcal{P}_3}{\partial \lambda_{3}^2}\left(-\frac{1}{3}(\lambda_{4}+\lambda_{5}),\lambda_{4},\lambda_{5}\right)\\
			&=\frac{8672\lambda_{4}^3}{27}+\frac{7376{\lambda_{4}}^2\lambda_{5}}{9}+\frac{7376\lambda_{4}{\lambda_{5}}^2}{9}+\frac{8672{\lambda_{5}}^3}{27}\\
			&=\left(\lambda_{4}+\lambda_{5}\right)\left(\frac{8672{\lambda_{4}}^2}{27}+\frac{13456\lambda_{4}\lambda_{5}}{27}+\frac{8672{\lambda_{5}}^2}{27}\right)>0.
		\end{aligned}$$
		
		From this we obtain 
		$$
		\begin{aligned}
			\frac{\partial \mathcal{P}_3}{\partial \lambda_{3}}&>	\frac{\partial \mathcal{P}_3}{\partial \lambda_{3}}\left(-\frac{1}{3}(\lambda_{4}+\lambda_{5}),\lambda_{4},\lambda_{5}\right)\\
			&=\frac{2944{\lambda_{4}}^4}{81}+\frac{17824{\lambda_{4}}^3\lambda_{5}}{81}+\frac{9488{\lambda_{4}}^2{\lambda_{5}}^2}{27}+\frac{17824\lambda_{4}{\lambda_{5}}^3}{81}+\frac{2944{\lambda_{5}}^4}{81}\\
			&=\frac{16}{81}\lambda_{5}^4\left(\frac{\lambda_{4}}{\lambda_{5}}+1\right)\left(4\frac{\lambda_{4}}{\lambda_{5}}+1\right)\left(46\left(\frac{\lambda_{4}}{\lambda_{5}}\right)^2+ 83\frac{\lambda_{4}}{\lambda_{5}} + 46\right)>0,
		\end{aligned}$$
		therefore
		$$
		\begin{aligned}
			\mathcal{P}_3\left(\lambda_3,\lambda_{4},\lambda_{5}\right)&>\mathcal{P}_3\left(-\frac{1}{3}(\lambda_{4}+\lambda_{5}),\lambda_{4},\lambda_{5}\right)\\
			&=\frac{64\left(\lambda_{4}+\lambda_{5}\right){\left(4{\lambda_{4}}^2+17\lambda_{4}\lambda_{5}+4{\lambda_{5}}^2\right)}^2}{243}>0.
		\end{aligned}$$
		The claim holds. 
		
		Case (3): $\lambda_{1}<\lambda_{2}=\lambda_{3}<\lambda_{4}<0<\lambda_{5}$.
		Likewise, as in Case (1), substituting $\lambda_{2}$ for $\lambda_{3}$, we can directly obtain $A(1),A(4),A(5)>0$, and in $A(2)$ $(\mbox{or}\ A(3))$, replacing $\lambda_{5}$ by $-\lambda_{1}-2\lambda_{3}-\lambda_{4}$ $(\mbox{or}\ \lambda_{1}-2\lambda_{2}-\lambda_{4})$, we obtain
		$$
		\begin{aligned}
			A(2)=- (\lambda_1 - \lambda_4)^2 \mathcal{P}_0^2\cdot \mathcal{P}_4\left(\lambda_1,\lambda_{3},\lambda_{4}\right),
		\end{aligned}
		$$	
		where 
		$$\begin{aligned}
			&\mathcal{P}_0=2\lambda_1^2 + 6\lambda_1\lambda_3 + 5\lambda_1\lambda_4 + 4\lambda_3^2 + 6\lambda_3\lambda_4 + 2\lambda_4^2,\\
			&\mathcal{P}_4\left(\lambda_1,\lambda_{3},\lambda_{4}\right)=\lambda_1^4\lambda_3 + \lambda_1^4\lambda_4 + 4\lambda_1^3\lambda_3^2 + 6\lambda_1^3\lambda_3\lambda_4+ 2\lambda_1^3\lambda_4^2- 4\lambda_1^2\lambda_3^3  \\
			&\qquad\qquad\qquad\qquad- 5\lambda_1^2\lambda_3^2\lambda_4 + 9\lambda_1^2\lambda_3\lambda_4^2+ 2\lambda_1^2\lambda_4^3- 16\lambda_1\lambda_3^4 - 34\lambda_1\lambda_3^3\lambda_4 \\
			&\qquad\qquad\qquad\qquad- 5\lambda_1\lambda_3^2\lambda_4^2 + 6\lambda_1\lambda_3\lambda_4^3 + \lambda_1\lambda_4^4+ 47\lambda_3^5 - 16\lambda_3^4\lambda_4\\
			&\qquad\qquad\qquad\qquad - 4\lambda_3^3\lambda_4^2 + 4\lambda_3^2\lambda_4^3 + \lambda_3\lambda_4^4,
		\end{aligned}
		$$
		and $A(3)$ is obtained by replacing $\lambda_{3}$ in $A(2)$ with $\lambda_{2}$.
		
		Since $\mathcal{P}_4\left(\lambda_1,\lambda_{3},\lambda_{4}\right)$ is homogeneous,  we can set  $d=-\lambda_{1},\lambda_3=-1,c=-\lambda_{4}$, so that $d>1>c>0$ and $\mathcal{P}_4\left(\lambda_1,\lambda_{3},\lambda_{4}\right)=-f(c,d)$, where
		$$
		\begin{aligned}
			f(c,d)=&c^4d+c^4+2c^3d^2+6c^3d+4c^3+2c^2d^3+9c^2d^2\\
			&-5c^2d-4c^2+cd^4+6cd^3-5cd^2-34cd\\
			&-16c+d^4+4d^3-4d^2-16d+47.
		\end{aligned}$$
		
		Let $a=d-1>0,\, b=1-c\in (0,1)$. Then we get
		$$\begin{aligned}
			g(a,b)=f(c,d)=&\left(2-b\right)a^4+\left(2b^2-14b+20\right)a^3+\left(-2b^3+21b^2-55b+50\right)a^2\\
			&+\left(b^4-14b^3+55b^2-50b\right)a+2b^4-20b^3+50b^2.
		\end{aligned}
		$$
		
		Since $$2-b>0,\,2b^2-14b+20>0,\, -2b^3+21b^2-55b+50>0$$
		and$$
		\begin{aligned}
			\Delta=&\left(b^4-14b^3+55b^2-50b\right)^2-4\left(-2b^3+21b^2-55b+50\right)\\
			&\qquad\qquad\qquad\qquad\qquad\qquad\qquad\qquad\cdot\left(2b^4-20b^3+50b^2\right)\\
			=&-b^2\left(b - 5\right)^2\left(- b^4 + 2b^3 + 67b^2 - 260b + 300\right)<0,
		\end{aligned}$$
		we have $g(a,b)>0$, and therefore $A(2)>0$. Analogously, $A(3)>0$.

		Case (4): $\lambda_{1}<\lambda_{2}<\lambda_{3}<\lambda_{4}<0<\lambda_{5}$. In this case, it is clear that $A(5) > 0$ by Lemma \ref{pvec}. For the other $A(r)$ $(r=1,2,3,4)$, we only prove $A(4) > 0 $, and the rest follow similarly.
		
		%From Lemma \ref{pvec}, we know that $A(4)$ is the result of replacing $\lambda_4$ in $ A(5)$ with $\lambda_5$, and then
		 Replacing $\lambda_5$ with $-\lambda$, Lemma \ref{pvec} yields
		
		\begin{equation}\label{A4}
			\begin{aligned}
				A(4) =& \lambda\prod_{i,j=1;i<j}^3\left(\lambda_{i}-\lambda_{j}\right)^{2}q_4^2-\sum_{k=1}^{3}\lambda_{k}\rho_k^2 \left(\lambda_{m}-\lambda_{l}\right)^{2}\left(\lambda_m+\lambda\right)^{2}\left(\lambda_{l}+\lambda\right)^{2}
			\end{aligned}
		\end{equation}
		for $\{m,l,k \}= \{1,2,3\}$,
		where 
		$$\rho_k=\sum_{i=1;i\neq k}^{3}\left(\lambda_{i}^2+\lambda_{i}\lambda_{4}\right)+\sum_{i,j=1,i<j}^{3}\lambda_{i}\lambda_{j}-2\left(\lambda_k^2+\lambda_{4}^2\right)-3\lambda_{k}\lambda_{4}$$ %\mbox{for}\ k=1,2,3.$$
		for $k=1,2,3$.
		%	$$
		%	\begin{aligned}
			%T_4=q_4
			%\lambda_{1}^{2}+5\lambda_{1}\lambda_{2}+5\lambda_{1}\lambda_{3}+\lambda_{1}\lambda_{4}+2\lambda_{2}^{2}+5\lambda_{2}\lambda_{3}+\lambda_{2}\lambda_{4}+2\lambda_{3}^{2}+\lambda_{3}\lambda_{4}+\lambda_{4}^{2} \\
			%	&T_2=\lambda_{1}^{2}+\lambda_{1}\lambda_{2}+\lambda_{1}\lambda_{3}+\lambda_{1}\lambda_{4}-2\lambda_{2}^{2}+\lambda_{2}\lambda_{3}-3\lambda_{2}\lambda_{4}+\lambda_{3}^{2}+\lambda_{3}\lambda_{4}-2\lambda_{4}^{2}\\
			%	&T_3=\lambda_{1}^{2}+\lambda_{1}\lambda_{2}+\lambda_{1}\lambda_{3}+\lambda_{1}\lambda_{4}+\lambda_{2}^{2}+\lambda_{2}\lambda_{3}+\lambda_{2}\lambda_{4}-2\lambda_{3}^{2}-3\lambda_{3}\lambda_{4}-2\lambda_{4}^{2} \\
			%	&T_1=-2\lambda_{1}^{2}+\lambda_{1}\lambda_{2}+\lambda_{1}\lambda_{3}-3\lambda_{1}\lambda_{4}+\lambda_{2}^{2}+\lambda_{2}\lambda_{3}+\lambda_{2}\lambda_{4}+\lambda_{3}^{2}+\lambda_{3}\lambda_{4}-2\lambda_{4}^{2}
			%	\end{aligned}$$.

		Write $$
		\begin{aligned}
			\mathcal{P}_5=&-\lambda_{2}(\lambda+\lambda_{3})^{2}(\lambda+\lambda_{1})^{2}\rho_2^2+ (\lambda_{1}+\lambda_{2})(\lambda_{1}-\lambda_{2})^{2}(\lambda_{2}-\lambda_{3})^{2}q_4^2,\\
			\mathcal{P}_6=&- \lambda_{3}(\lambda+\lambda_{2})^{2}(\lambda+\lambda_{1})^{2}\rho_3^2+4\lambda_{3}(\lambda_{1}-\lambda_{3})^{2}(\lambda_{2}-\lambda_{3})^{2}q_4^2.
		\end{aligned}$$
We now prove separately that $\mathcal{P}_5> 0$ and $\mathcal{P}_6 > 0$, and then
		$$A(4)> \left(\lambda_{1}-\lambda_{3}\right)^2\mathcal{P}_5+\left(\lambda_{1}-\lambda_{2}\right)^2\mathcal{P}_6>0.$$			% from which it naturally follows that $C(4) > 0$.  
		
		Let $$d=-\lambda_{1},\quad c=-\lambda_{2}, \quad b=-\lambda_{3},\quad a=-\lambda_{4}.$$ Then $d>c>b>a>0$ and we have 
		$$
		\begin{aligned}
			\mathcal{P}_5=&c{\left(a+b+c+2d\right)}^2{\left(a+2b+c+d\right)}^2\\
			&\qquad\cdot{\left(-2a^2+ab-3ac+ad+b^2+bc+bd-2c^2+cd+d^2\right)}^2\\
			&-\left(c+d\right){\left(b-c\right)}^2{\left(c-d\right)}^2\\
			&\qquad\cdot{\left(a^2+ab+ac+ad+2b^2+5bc+5bd+2c^2+5cd+2d^2\right)}^2,
		\end{aligned}$$
		$$\begin{aligned}
			\mathcal{P}_6=&b\left (a+b+c+2d\right)^2 \left(a+b+2c+d\right)^2\left(\mathcal{P}^{(1)}_{6}\right)^2
		 - 4b \left(b-c\right)^2 \left(b-d\right)^2\left(\mathcal{P}^{(2)}_{6}\right)^2.\\
		\end{aligned}
		$$
	where
		$$\mathcal{P}^{(1)}_{6}=-2a^2-3ab+ac+ad-2b^2+bc+bd+c^2+cd+d^2>0,$$
		$$\mathcal{P}^{(2)}_{6}=a^2+ab+ac+ad+2b^2+5bc+5bd+2c^2+5cd+2d^2>0.
		$$
		Therefore, for $\mathcal{P}_6>0$, it suffices to show that %one of the factors in its factorization satisfies 
		$$\begin{aligned}
			\mathcal{P}_{6}^{fac} =&\left(a+b+c+2d\right) \left(a+b+2c+d\right)\mathcal{P}^{(1)}_{6}\\
			&- 2\left(b-c\right) \left(b-d\right)\mathcal{P}^{(2)}_{6}>0.
		\end{aligned}$$
		
		Taking the second derivative of both sides of $\mathcal{P}_{6}^{fac} $ with respect to $d$ yields
		$$\begin{aligned}
			\frac{\partial^2 \mathcal{P}_6^{fac}   }{\partial d^2}=&12 b^2 + 34 b c + 54 b d + 8 a b - 2 c^2 \\
			&+ 18 c d + 22 a c + 24 d^2 + 30 a d
			>0.
		\end{aligned}$$
		Thus 
		$$\begin{aligned}
			\frac{\partial \mathcal{P}_6^{fac}  }{\partial d}>\left.\frac{\partial \mathcal{P}^{fac}   }{\partial d}\right|_{d=c}=&-5 a^3 - 10 a^2 b - 5 a^2 c- 12 a b^2 + 9 a b c\\
			&+ 48 a c^2 - 11 b^3+ 15 b^2 c + 78 b c^2 + 18 c^3 
			>0.
		\end{aligned}
		$$
	Consequently, we have
		$$
		\begin{aligned}
		\mathcal{P}_{6}^{fac}  >\left. \mathcal{P}_{6}^{fac}  \right|_{d=c}=&(-2a^4-7a^3b+9ac^3)+(-20a^2bc+9abc^2+11bc^3)\\
			&+(-6b^4+bc^3+4b^2c^2+bc^3)+(-10a^3c+10ac^3)\\
			&+(-12a^2b^2+ac^3+11b^2c^2)+(-9ab^3+9bc^3)\\
			&+(-5a^2c^2+4bc^3+ac^3)+(-24ab^2c+13bc^3+11ac^3)\\
			&+(-22b^3c+9c^4+13bc^3)
			>0.
		\end{aligned}
		$$
		
		For $\mathcal{P}_5>0$, by direct calculation, we obtain
		$$\frac{\partial^7 \mathcal{P}_5}{\partial d^7}=20160\left(5ac+9bc+8cd-b^2+4c^2\right)>0,$$
		and thus 
		$$\begin{aligned}
			\frac{\partial^6 \mathcal{P}_5}{\partial d^6}>\left.\frac{\partial^6 \mathcal{P}_5}{\partial d^6}\right|_{d=c}=&2880\left(
			6.25a^2 c - a b^2 + 32.5a b c\right.  \\
			&\left.+48.5 a c^2 - 5b^3 + 20.25b^2 c + 96.5 b c^2 + 58.25 c^3\right)>0.
		\end{aligned}
		$$
	By analogy, 
		$$\begin{aligned}
			\frac{\partial^5 \mathcal{P}_5}{\partial d^5}>&\left.\frac{\partial^5 \mathcal{P}_5}{\partial d^5}\right|_{d=c}>%70200c^4-2400a^3c-600a^2b^2-1680ab^3-3960b^4>0,
			600\left(117c^4-4a^3c-a^2b^2-2.8ab^3-6.6b^4\right)>0	\end{aligned}$$
				$$	\Downarrow$$
			$$\begin{aligned}
			\frac{\partial^4 \mathcal{P}_5}{\partial d^4}>\left.\frac{\partial^4 \mathcal{P}_5}{\partial d^4}\right|_{d=c}>&-1392a^4c-48a^3b^2-2784a^3bc-7200a^3c^2-288a^2b^3\\
			&-1752a^2c^3-336ab^4-480b^5+19224c^5>0,	\end{aligned}$$
				$$	\Downarrow$$
			$$\begin{aligned}
			\frac{\partial^3 \mathcal{P}_5}{\partial d^3}>\left.\frac{\partial^3 \mathcal{P}_5}{\partial d^3}\right|_{d=c}>&-120 a^5 c - 6 a^4 b^2 - 1236 a^4 b c - 2058 a^4 c^2 - 12 a^3 b^3- 1452 a^3 b^2 c \\
			& - 7128 a^3 bc^2  - 7308 a^3 c^3 - 30 a^2 b^4 - 3360 a^2 b c^3- 7008 a^2 c^4   \\
			&- 24 a b^5 - 24 b^6 + 38142 b^2 c^4 + 24012 b c^5 + 3402 c^6>0,	\end{aligned}$$
				$$	\Downarrow$$
			$$\begin{aligned}
			\frac{\partial^2 \mathcal{P}_5}{\partial d^2}>\left.\frac{\partial^2 \mathcal{P}_5}{\partial d^2}\right|_{d=c}>&2c\Big[- 22 a^5 b - 289 a^4 b^2 - 792 a^4 b c - 444 a^4 c^2- 242 a^3 b^3 - 1718 a^3 b^2 c \\ &+\left(- 1752 a^3 c^3 - 3392 a^2 b c^3+ 5444 b^3 c^3\right) +\left(- 468 a c^5+ 2196 b c^5 \right)\\
			&+\left(- 2023 a^2 c^4 + 5342 b^2 c^4  \right)- 3238 a^3 b c^2   - 1065 a^2 b^2 c^2+ 2777 b^4 c^2 \\
			& + 162 c^6\Big]>0,	\end{aligned}$$
				$$	\Downarrow$$
			$$
			\begin{aligned}
			\frac{\partial \mathcal{P}_5}{\partial d}>\left.\frac{\partial \mathcal{P}_5}{\partial d}\right|_{d=c}=&c (a-b)^2 (2a+b+2c)^2 (2a+4b+4c) (a+b+3c)^2\\
			& - 2c (a-b) (a+2b+2c)^2 (2a+b+2c) (a+b+3c)^3\\
			& + c (a-b)^2 (a+2b+2c)^2 (2a+b+2c)^2 (4a+4b+12c)>0,
			\end{aligned}
		$$
and hence we have
		$$
		\mathcal{P}_5>\left.\mathcal{P}_5\right|_{d=c}=
		c{\left(a-b\right)}^2{\left(a+2b+2c\right)}^2{\left(2a+b+2c\right)}^2{\left(a+b+3c\right)}^2>0.
		$$
The fully explicit forms for the derivatives of $\mathcal{P}_5$ are provided in Appendix \ref{B}.
	\end{proof}

\subsection{Proof of Theorem \ref{two}}
\begin{proof}
	We first point out that, from Newton's formula, it follows that %for a minimal hypersurface,
	 the constancy of $H_3$ is equivalent to that of $f_3$, and when the scalar curvature $R$ is constant, $ H_4$ is constant if and only if $f_4$ is constant.
	By our convention $\sigma_3\geq 0$, there are no points whose principal curvatures are of the two types: $\lambda_{1}<\lambda_{2}=\lambda_{3}=\lambda_{4}=\lambda_{5}$ and $\lambda_{1}=\lambda_{2}<\lambda_{3}=\lambda_{4}=\lambda_{5}$. Thus we only have the following two cases.
	
	Case (1): One is a triple real root, and the other is a double real root. In this case, we assume $$\lambda_{1}=\lambda_{2}=\lambda_{3}=\zeta<\lambda_{4}=\lambda_{5}=-\frac{3}{2}\zeta.$$
	
	After taking the covariant derivative on both sides of conditions $H=0$ and $S=constant$, 
	we have 	
	$$
	\begin{cases}
		h_{11k} +h_{22k}+h_{33k}+h_{44k}+h_{55k}=0,\\
		\zeta\left(h_{11k} +h_{22k}+h_{33k}\right)-\frac{3}{2}\zeta\left(h_{44k}+h_{55k}\right)=0,
	\end{cases}
	$$
	Thus 
	$$h_{11k} +h_{22k}+h_{33k}=0\ \mbox{and}\ h_{44k}+h_{55k}=0\
	\mbox{for}\ k=1,2,\cdots,5.$$
	
	Since $H=0$ and $S$ is constant, we get $(H)_{mm}=0$ and $(S)_{mm}=0$,  so we have
	\begin{equation}\label{H2}
		\sum_{i=1}^{5}h_{iimm}=0\ \mbox{for}\ m=1,2,\cdots,5.
	\end{equation}
	\begin{equation}\label{S2}
		\sum_{i=1}^{5}h_{ii}h_{iimm}+\sum_{i,j=1}^{5}h_{ijm}^2=0\ \mbox{for}\ m=1,2,\cdots,5.
	\end{equation}
	
		For convenience, we write %$$Y_1=h_{114}^2+h_{115}^2+h_{124}^2+h_{125}^2+h_{134}^2+h_{135}^2.$$
%	Taking $m=1$ in $(\ref{H2})$ and $(\ref{S2})$, we get 
%	\begin{equation}\label{S}
	%	\begin{aligned}
%			\sum_{i=1}^{5}h_{ii}h_{ii11}+\sum_{i,j=1}^{5}h_{ij1}^2=&\zeta\left(h_{1111} +h_{2211}+h_{3311}\right)-\frac{3}{2}\zeta\left(h_{4411}+h_{5511}\right)\\
%			&+h_{111}^2+h_{221}^2+h_{331}^2+h_{441}^2+h_{551}^2\\
%			&+2\left(h_{112}^2+h_{113}^2+h_{123}^2+h_{145}^2+Y_1\right)\\
%			=&-\frac{5}{2}\zeta\left(h_{4411}+h_{5511}\right)+h_{111}^2+h_{221}^2+h_{331}^2+h_{441}^2\\
%			&+h_{551}^2+2\left(h_{112}^2+h_{113}^2+h_{123}^2+h_{145}^2+Y_1\right)\\
%			=&0.
%		\end{aligned}
%	\end{equation}
	$$Y^1_l=h_{14l}^2+h_{15l}^2+h_{24l}^2+h_{25l}^2+h_{34l}^2+h_{35l}^2$$
	for $l=1,2,\cdots,5$. Taking $m=l$ in $(\ref{H2})$ and $(\ref{S2})$, we get 
	\begin{equation}\label{S}
		\begin{aligned}
			\sum_{i=1}^{5}h_{ii}h_{iill}+\sum_{i,j=1}^{5}h_{ijl}^2=&\zeta\left(h_{11ll} +h_{22ll}+h_{33ll}\right)-\frac{3}{2}\zeta\left(h_{44ll}+h_{55ll}\right)\\
			&+h_{11l}^2+h_{22l}^2+h_{33l}^2+h_{44l}^2+h_{55l}^2\\
			&+2\left(h_{12l}^2+h_{13l}^2+h_{23l}^2+h_{45l}^2+Y^1_l\right)\\
			=&-\frac{5}{2}\zeta\left(h_{44ll}+h_{55ll}\right)+h_{11l}^2+h_{22l}^2+h_{33l}^2+h_{44l}^2\\
			&+h_{55l}^2+2\left(h_{12l}^2+h_{13l}^2+h_{23l}^2+h_{45l}^2+Y^1_l\right)\\
			=&0.
		\end{aligned}
	\end{equation}
	
	Next, since $f_4=\sum_{i,j,k,l=1}^{5}h_{ij}h_{jk}h_{kl}h_{li}$ is constant, we obtain $(f_4)_{mm}=0$, i.e.,
	\begin{equation}\label{f42}
		\sum_{i=1}^{5}\lambda_{i}^3h_{iimm}+\sum_{i,j=1}^{5}(2\lambda_{i}^2+\lambda_{i}\lambda_{j})h_{ijm}^2=0\ \mbox{for}\ m=1,2,\cdots,5.
	\end{equation}
	
	Taking $m=l$ in $(\ref{H2})$ and $(\ref{f42})$, we get 
	\begin{equation}\label{f4}
		\begin{aligned}
			\sum_{i=1}^{5}\lambda_{i}^3h_{iill}+&\sum_{i,j=1}^{5}(2\lambda_{i}^2+\lambda_{i}\lambda_{j})h_{ijl}^2=\zeta^3(h_{11ll} +h_{22ll}+h_{33ll})\\
			&-\frac{27}{8}\zeta^3(h_{44ll}+h_{55ll})+\sum_{i,j=1}^{3}3\zeta^2h_{ijl}^2+\sum_{i,j=4}^{5}\frac{27}{4}\zeta^2h_{ijl}^2\\
			&+\sum_{i=1,2,3;j=4,5}(2\zeta^2-\frac{3}{2}\zeta^2)h_{ijl}^2+\sum_{i=4,5;j=1,2,3}(\frac{9}{2}\zeta^2-\frac{3}{2}\zeta^2)h_{ijl}^2\\
			=&-\frac{35}{8}\zeta^3\left(h_{44ll}+h_{55ll}\right)+\zeta^2\Big[3(h_{11l}^2+h_{22l}^2+h_{33l}^2)+6\big(h_{12l}^2+h_{13l}^2\\
			&+h_{23l}^2\big)+\frac{27}{4}\left(h_{44l}^2+h_{55l}^2\right)+\frac{27}{2}h_{45l}^2+\frac{7}{2}Y^1_l\Big]\\
			=&0.
		\end{aligned}
	\end{equation}
	
		Then subtracting $\frac{7}{4}\zeta^2(\ref{S})$ from $(\ref{f4})$, we get 
	$$\frac{5}{4}\left(h_{11l}^2+h_{22l}^2+h_{33l}^2\right)+5\left(h_{44l}^2+h_{55l}^2\right)+\frac{5}{2}\left(h_{12l}^2+h_{13l}^2+h_{23l}^2\right)+10h_{45l}^2=0.$$
	Thus for different $l=1,2,\cdots,5$, we obtain
	$$
	\begin{cases}
		h_{111}=h_{221}=h_{331}=h_{441}=h_{551}=h_{112}=h_{113}=h_{123}=h_{145}=0,\\
		h_{222}=h_{332}=h_{442}=h_{552}=h_{223}=h_{245}=0,\\
		h_{333}=h_{443}=h_{553}=h_{345}=0,\\
		h_{114}=h_{224}=h_{334}=h_{444}=h_{554}=h_{124}=h_{134}=h_{234}=h_{445}=0,\\
		h_{115}=h_{225}=h_{335}=h_{555}=h_{125}=h_{135}=h_{235}=0.\\
	\end{cases} 
	$$

	Case (2): One is a simple real root, and the other is a quadruple real root.  In this case, we assume
	$$\lambda_{1}=\lambda_{2}=\lambda_{3}=\lambda_{4}=\mu<\lambda_{5}=-4\mu.$$
	
	Similar to Case (1), taking the covariant derivatives of $H=0$ and $S=constant$ yields
	
	$$
	\begin{cases}
		h_{11k} +h_{22k}+h_{33k}+h_{44k}+h_{55k}=0,\\
		\mu\left(h_{11k} +h_{22k}+h_{33k}+h_{44k}\right)-4\mu h_{55k}=0,
	\end{cases}
	%\ \mbox{for}\ k=1,2,\cdots,5.
	$$
	thus
	$$h_{11k} +h_{22k}+h_{33k}+h_{44k}=0\ \mbox{and}\ h_{55k}=0\ \mbox{for}\ k=1,2,\cdots,5.$$
	
	Since $f_3=\sum_{i,j,k=1}^{5}h_{ij}h_{jk}h_{ki}$ is constant, we have $(f_3)_{mm}=0$, and we get 
	\begin{equation}\label{case2f3}
		\sum_{i=1}^{5}\lambda_{i}^2h_{iimm}+2\sum_{i,j=1}^{5}\lambda_{i}h_{ijm}^2=0\ \mbox{for}\ m=1,2,\cdots,5.
	\end{equation}
	
		For convenience, we write $$Y^2_l=h_{12l}^2+h_{13l}^2+h_{14l}^2+h_{23l}^2+h_{24l}^2+h_{34l}^2$$
	for $l=1,2,\cdots,5$.
For $m=l$ in $(\ref{H2})$ and $(\ref{case2f3})$, we get 
	\begin{equation}\label{f3}
		\begin{aligned}
			\sum_{i=1}^{5}\lambda_{i}^2h_{iill}+&\sum_{i,j=1}^{5}2\lambda_{i}h_{ijl}^2=\mu^2\left(h_{11ll} +h_{22ll}+h_{33ll}+h_{44ll}\right)\\
			&+16\mu^2h_{55ll}+2\mu\sum_{i}\left(h_{1il}^2+h_{2il}^2+h_{3il}^2+h_{4il}^2-4h_{5il}^2\right)\\
			=&15\mu^2h_{55ll}+2\mu\Big[\left(h_{11l}^2+h_{22l}^2+h_{33l}^2+h_{44l}^2\right)\\
			&+2Y^2_l-3\left(h_{15l}^2+h_{25l}^2+h_{35l}^2+h_{45l}^2\right)\Big]\\
			=&0.
		\end{aligned}
	\end{equation}
	
Then, taking $m=l$ in $(\ref{H2})$ and $(\ref{f42})$, we get 
	\begin{equation}\label{case2f4}
		\begin{aligned}
			\sum_{i=1}^{5}\lambda_{i}^3h_{iill}+&\sum_{i,j=1}^{5}(2\lambda_{i}^2+\lambda_{i}\lambda_{j})h_{ijl}^2=\mu^3\left(h_{11ll} +h_{22ll}+h_{33ll}+h_{44ll}\right)\\
			&-64\mu^3h_{55ll}+\sum_{i,j=1}^{4}3\mu^2h_{ijl}^2+48\mu^2h_{55l}^2\\
			&+\sum_{i=1}^4(-2\mu^2)h_{5il}^2+\sum_{j=1}^4(28\mu^2)h_{5jl}^2\\
			=&-65\mu^3h_{55ll}+\mu^2\Big[3\left(h_{11l}^2+h_{22l}^2+h_{33l}^2+h_{44l}^2\right)\\
			&+6Y^2_l+26\left(h_{15l}^2+h_{25l}^2+h_{35l}^2+h_{45l}^2\right)\Big]\\
			=&0.
		\end{aligned}
	\end{equation}

Next, adding $\frac{13}{3}\mu(\ref{f3})$ and $(\ref{case2f4})$, we obtain

	$$\left(h_{11l}^2+h_{22l}^2+h_{33l}^2+h_{44l}^2\right)
	+2Y^2_l=0.$$
	Thus for different $l=1,2,\cdots,5$, we obtain
	$$
	\begin{cases}
		h_{111}=h_{221}=h_{331}=h_{441}=h_{112}=h_{113}=h_{114}=h_{123}=h_{124}=h_{134}=0,\\
		h_{222}=h_{332}=h_{442}=h_{223}=h_{224}=h_{234}=0,\\
		h_{333}=h_{443}=h_{334}=0,\\
		h_{444}=0,\\
		h_{115}=h_{225}=h_{335}=h_{445}=h_{125}=h_{135}=h_{145}=h_{235}=h_{245}=h_{345}=0.
	\end{cases}
	$$
	%$$h_{114}=h_{123}=h_{124}=h_{134}=0$$
	%For $m=2$, we can get
%	$$h_{222}=h_{332}=h_{442}=h_{223}=h_{224}=h_{234}=0$$
	%	For $m=3$, we can get
%	$$h_{333}=h_{331}=h_{443}=h_{334}=0$$
	%	For $m=4,5$, we can get
%	$$h_{444}=0=h_{55k}=h_{225}=h_{335}=h_{445}=h_{235}=h_{245}=h_{345}=0$$
%	for $k=1,2,3,4,5$.
%	$h_{115}=0$ follows from 

	Finally, combining this with $h_{55k}=0$ for arbitrary $k=1,2,\cdots,5$, we can obtain that all $h_{ijk}=0$ for $i,j,k=1,2,\cdots,5.$
	
	In summary, for the above two cases, we have proved that $h_{ijk}=0$ for all $i,j,k=1,2,\cdots,5$. Furthermore, from $(\ref{DeltaS})$, we have $S(S-5)=0$. Noting that $S>0$, it follows that $S\equiv5$ and $M^5$ is either the Clifford torus $\mathbb{S}^2(\sqrt\frac{2}{5})\times\mathbb{S}^3(\sqrt\frac{3}{5})$ or $\mathbb{S}^1(\sqrt\frac{1}{5})\times\mathbb{S}^4(\sqrt\frac{4}{5})$.
\end{proof}

\hspace*{2em}
\\
\textbf{Acknowledgments.} The author would like to thank Professor Tang Zizhou and Professor Ge Jianquan for helpful discussion.

\begin{appendix}
	\section{Explicit Expression of $\mathcal{P}_{A(3)}$, $\mathcal{P}_{v_1}$ and $\mathcal{P}_{A(5)}$}\label{A}
		$$\begin{aligned}
		\mathcal{P}_{v_1}=&
		\lambda_{2}^2 \lambda_{3}^2 - 2 \lambda_{1}^2 \lambda_{3}^2 - 2 \lambda_{1}^2 \lambda_{2}^2 + \lambda_{1} \lambda_{2}^3 + \lambda_{1}^3 \lambda_{2} + \lambda_{1} \lambda_{3}^3 + \lambda_{1}^3 \lambda_{3} + \lambda_{2} \lambda_{3}^3 - \lambda_{1}^3 \lambda_{4} \\
		&+ \lambda_{2}^3 \lambda_{3} - \lambda_{1}^3 \lambda_{5} - \lambda_{1} \lambda_{2} \lambda_{3}^2 - \lambda_{1} \lambda_{2}^2 \lambda_{3} - \lambda_{1}^2 \lambda_{2} \lambda_{3} - \lambda_{1} \lambda_{2}^2 \lambda_{4} + 2 \lambda_{1}^2 \lambda_{2} \lambda_{4} \\
		&- \lambda_{1} \lambda_{2}^2 \lambda_{5} - \lambda_{1} \lambda_{3}^2 \lambda_{4} + 2 \lambda_{1}^2 \lambda_{2} \lambda_{5} + 2 \lambda_{1}^2 \lambda_{3} \lambda_{4} - \lambda_{1} \lambda_{3}^2 \lambda_{5} - \lambda_{2} \lambda_{3}^2 \lambda_{4} \\
		&+ 2 \lambda_{1}^2 \lambda_{3} \lambda_{5} - \lambda_{2}^2 \lambda_{3} \lambda_{4} - \lambda_{2} \lambda_{3}^2 \lambda_{5} - 3 \lambda_{1}^2 \lambda_{4} \lambda_{5} - \lambda_{2}^2 \lambda_{3} \lambda_{5} + \lambda_{1} \lambda_{2} \lambda_{3} \lambda_{4} \\
		&+ \lambda_{1} \lambda_{2} \lambda_{3} \lambda_{5} + \lambda_{1} \lambda_{2} \lambda_{4} \lambda_{5} + \lambda_{1} \lambda_{3} \lambda_{4} \lambda_{5} + \lambda_{2} \lambda_{3} \lambda_{4} \lambda_{5}.
	\end{aligned}$$
	
	$$\mathcal{P}_{A(3)}=2\lambda_{1}^{2}+5\lambda_{1}\lambda_{2}+6\lambda_{1}\lambda_{5}+2\lambda_{2}^{2}+6\lambda_{2}\lambda_{5}+4\lambda_{5}^{2}.$$
		$$\mathcal{P}_{A(5)}=
	2\alpha_{1}^{2}+6\alpha_{1}\alpha_{3}+5\alpha_{1}\alpha_{4}+4\alpha_{3}^{2}+6\alpha_{3}\alpha_{4}+2\alpha_{4}^{2}.$$

	\section{Expanded Form of $L(5)$}\label{A1}
	For the convenience of verification, we provide the fully expanded form of $L(5)$. The definition of $L(5)$ is given in (\ref{letL}) and (\ref{L}). Explicitly, 
$$\begin{aligned}
L(5) = &-2 \lambda_5 \left( \frac{1}{c_1} + \frac{1}{c_2} + \frac{1}{c_3} + \frac{1}{c_4} + \frac{1}{c_5} + \frac{1}{c_6} \right) +  (b_1 + b_2 + b_3 + b_4) \\
&+  \frac{1}{(\lambda_1 - \lambda_5)(\lambda_2 - \lambda_5)(\lambda_3 - \lambda_5)(\lambda_4 - \lambda_5)} \left( \frac{1}{(\lambda_5 - \lambda_1)^2 (\lambda_2 - \lambda_1)(\lambda_3 - \lambda_1)(\lambda_4 - \lambda_1)} \right.\\
&\left.+ \frac{1}{(\lambda_5 - \lambda_2)^2 (\lambda_4 - \lambda_2)(\lambda_3 - \lambda_2)(\lambda_1 - \lambda_2)} + \frac{1}{(\lambda_5 - \lambda_3)^2 (\lambda_4 - \lambda_3)(\lambda_2 - \lambda_3)(\lambda_1 - \lambda_3)} \right. \\
&\left.+ \frac{1}{(\lambda_5 - \lambda_4)^2 (\lambda_3 - \lambda_4)(\lambda_2 - \lambda_4)(\lambda_1 - \lambda_4)} \right),
\end{aligned}$$
where
$$\begin{aligned}
	c_1 =& (\lambda_5 - \lambda_1)^2 (\lambda_5 - \lambda_2)^2 (\lambda_2 - \lambda_1)^2 (\lambda_3 - \lambda_1) (\lambda_4 - \lambda_1) (\lambda_3 - \lambda_2) (\lambda_4 - \lambda_2),\\
	c_2 =& (\lambda_5 - \lambda_1)^2 (\lambda_5 - \lambda_3)^2 (\lambda_3 - \lambda_1)^2 (\lambda_2 - \lambda_1) (\lambda_4 - \lambda_1) (\lambda_2 - \lambda_3) (\lambda_4 - \lambda_3),\\
	c_3 = &(\lambda_5 - \lambda_1)^2 (\lambda_5 - \lambda_4)^2 (\lambda_4 - \lambda_1)^2 (\lambda_2 - \lambda_1) (\lambda_3 - \lambda_1) (\lambda_2 - \lambda_4) (\lambda_3 - \lambda_4),\\
	c_4 = &(\lambda_5 - \lambda_2)^2 (\lambda_5 - \lambda_3)^2 (\lambda_3 - \lambda_2)^2 (\lambda_1 - \lambda_2) (\lambda_4 - \lambda_2) (\lambda_1 - \lambda_3) (\lambda_4 - \lambda_3),\\
	c_5 =& (\lambda_5 - \lambda_2)^2 (\lambda_5 - \lambda_4)^2 (\lambda_4 - \lambda_2)^2 (\lambda_1 - \lambda_2) (\lambda_3 - \lambda_2) (\lambda_1 - \lambda_4) (\lambda_3 - \lambda_4),\\
	c_6 =& (\lambda_5 - \lambda_3)^2 (\lambda_5 - \lambda_4)^2 (\lambda_4 - \lambda_3)^2 (\lambda_1 - \lambda_3) (\lambda_2 - \lambda_3) (\lambda_1 - \lambda_4) (\lambda_2 - \lambda_4)
\end{aligned}
$$
and 
$$\begin{aligned}
	b_1 =&\frac{1}{(\lambda_2 - \lambda_1)^2 (\lambda_3 - \lambda_1)^2 (\lambda_4 - \lambda_1)^2 (\lambda_5 - \lambda_1)^3},\\
	b_2 =& \frac{1}{(\lambda_2 - \lambda_1)^2 (\lambda_3 - \lambda_2)^2 (\lambda_4 - \lambda_2)^2 (\lambda_5 - \lambda_2)^3},\\
	b_3 =& \frac{1}{(\lambda_2 - \lambda_3)^2 (\lambda_3 - \lambda_1)^2 (\lambda_4 - \lambda_3)^2 (\lambda_5 - \lambda_3)^3},\\
	b_4 =& \frac{1}{(\lambda_2 - \lambda_4)^2 (\lambda_4 - \lambda_1)^2 (\lambda_4 - \lambda_3)^2 (\lambda_5 - \lambda_4)^3}.
\end{aligned}$$
	\section{Complete Expansions of the Derivatives of $\mathcal{P}_5$.}\label{B}
Recall that in the proof of Case $(4)$ of Corollary \ref{cor}, we need the following computation results.
$$
\begin{aligned}
\frac{\partial^6 \mathcal{P}_5}{\partial d^6}=&	18000a^2c-2880ab^2+93600abc+38880ac^2+100800acd\\
&-14400b^3+78480b^2c-20160b^2d+96480bc^2+181440bcd\\
&+6480c^3+80640c^2d+80640cd^2.
\end{aligned}
$$

$$
\begin{aligned}
\left.\frac{\partial^5 \mathcal{P}_5}{\partial d^5}\right|_{d=c}=&	\mathbf{-2400a^3c-600a^2b^2}+15360a^2bc+14040a^2c^2\mathbf{-1680ab^3}\\
&+38160ab^2c+128400abc^2+84720ac^3\mathbf{-3960b^4}+6000b^3c\\
&+114240b^2c^2+197520bc^3\mathbf{+70200c^4}.
\end{aligned}
$$

$$
\begin{aligned}
	\left.\frac{\partial^4 \mathcal{P}_5}{\partial d^4}\right|_{d=c}=&	\mathbf{-1392a^4c-48a^3b^2-2784a^3bc-7200a^3c^2-288a^2b^3}+5976a^2b^2c+10512a^2bc^2\\&\mathbf{-1752a^2c^3-336ab^4}+10752ab^3c+55200ab^2c^2+76128abc^3+27024ac^4\\&\mathbf{-480b^5}+600b^4c+27120b^3c^2+86736b^2c^3+85008bc^4\mathbf{+19224c^5}.
\end{aligned}
$$

$$
\begin{aligned}
	\left.\frac{\partial^3\mathcal{P}_5}{\partial d^3}\right|_{d=c}=&	\mathbf{-120a^5c-6a^4b^2-1236a^4bc-2058a^4c^2-12a^3b^3-1452a^3b^2c-7128a^3bc^2}\\&\mathbf{-7308a^3c^3-30a^2b^4}+1776a^2b^3c+3822a^2b^2c^2\mathbf{-3360a^2bc^3-7008a^2c^4}\\&\mathbf{-24ab^5}+2748ab^4c+16632ab^3c^2+32904ab^2c^3+23136abc^4+3204ac^5\\&-24b^6+492b^5c+6012b^4c^2+23484b^3c^3+\mathbf{38142b^2c^4+24012bc^5+3402c^6}.
\end{aligned}
$$

$$
\begin{aligned}
	\left.\frac{\partial^2 \mathcal{P}_5}{\partial d^2}\right|_{d=c}=&	2c\left[25a^6\mathbf{-22a^5b}+52a^5c\mathbf{-289a^4b^2-792a^4bc-444a^4c^2-242a^3b^3}\right.\\&\mathbf{-1718a^3b^2c-3238a^3bc^2-1752a^3c^3}+247a^2b^4+558a^2b^3c\\&\mathbf{-1065a^2b^2c^2-3392a^2bc^3-2023a^2c^4}+316ab^5+2106ab^4c\\&+5050ab^3c^2+4860ab^2c^3+1136abc^4\mathbf{-468ac^5}+77b^6+722b^5c\\
	&\left.\mathbf{+2777b^4c^2+5444b^3c^3+5342b^2c^4+2196bc^5+162c^6}\right].
\end{aligned}
$$

\end{appendix}

\end{document}